\theoremstyle{definition}
\newtheorem{definition}{Definition}[section]
\newtheorem{corollary}[definition]{Corollary}
\newtheorem{example}[definition]{Example}
\newtheorem*{notation*}{Notation}
\newtheorem{proposition}[definition]{Proposition}
\newtheorem{remark}[definition]{Remark}
\newtheorem*{remark*}{Remark}
\newtheorem{theorem}[definition]{Theorem}
\newtheorem*{theorem*}{Theorem}
\renewcommand*\p@enumii{}                                                                          % format iterated enumerate environments
\newcommand{\BIGOP}[1]{\mathop{\mathchoice%
{\raise-0.22em\hbox{\huge $#1$}}%
{\raise-0.05em\hbox{\Large $#1$}}%
{\hbox{\large $#1$}}%
{#1}}}
\def\morchoice#1#2{\begingroup\setbox0=\hbox{$#1\xrightarrow{#2}$}%
        \setbox1=\hbox{$#1\longrightarrow$}%
        \ifdim\wd0<\wd1
        \stackrel{#2}\longrightarrow
        \else
        \xrightarrow{#2}\fi\endgroup}
\newcommand{\booktitle}[1]{\textsl{#1}}                                                            % booknames
\newcommand{\eigenname}[1]{\textsc{#1}}                                                            % eigennames
\newcommand{\newnotion}[1]{\textit{#1}}                                                            % new notions
\newcommand{\nbd}{\nobreakdash-\hspace{0pt}}                                                       % no-break-dash
\DeclareMathOperator{\AutomorphismGroup}{\mathrm{Aut}}                                             % automorphism group
\DeclareMathOperator{\Cokernel}{\mathrm{Coker}}                                                    % cokernel of a morphism
\DeclareMathOperator{\GroupPart}{\mathrm{Gp}}	                                                      % group part of a crossed module
\DeclareMathOperator{\Image}{\mathrm{Im}}                                                          % image of a morphism
\DeclareMathOperator{\Kernel}{\mathrm{Ker}}                                                        % kernel of a morphism
\DeclareMathOperator{\ModulePart}{\mathrm{Mp}}                                                     % module part of a crossed module
\DeclareMathOperator{\Ob}{\mathrm{Ob}}                                                             % set of objects
\newcommand{\act}{\cdot}                                                                           % group action
\newcommand{\ascendinginterval}[2]{\lceil #1, #2 \rceil}                                           % ascending interval
\newcommand{\basis}{\mathrm{e}}                                                                    % basis of a free object
\newcommand{\bigcart}{\BIGOP{\times}}                                                              % cartesian product over a family
\newcommand{\canonicalepimorphism}[1][]{\uppi^{#1}}                                                % canonical epimorphism
\newcommand{\canonicalmonomorphism}[1][]{\upiota^{#1}}                                             % canonical monomorphism
\newcommand{\cart}{\times}                                                                         % cartesian product
\newcommand{\CoboundaryGroup}[1][]{\mathrm{B}^{#1}}                                                % coboundary group
\newcommand{\CocycleGroup}[1][]{\mathrm{Z}^{#1}}                                                   % cocycle group
\newcommand{\cocycleofextension}[1]{\mathrm{z}^{#1}}                                               % n-cocycle of ext. with resp. to lifting system
\newcommand{\cohomologyclassofextension}[1][]{\mathrm{cl}^{#1}}                                    % cohomology class of an extension
\newcommand{\CochainComplex}[1][]{\mathrm{Ch}^{#1}}                                                % cochain complex
\newcommand{\CohomologyGroup}[1][]{\mathrm{H}^{#1}}                                                % cohomology group
\newcommand{\comp}{\circ}                                                                          % composition
\newcommand{\CrMod}{\mathbf{CrMod}}                                                                % category of crossed modules
\newcommand{\descendinginterval}[2]{\lfloor #1, #2 \rfloor}                                        % descending interval
\newcommand{\differential}{\partial}                                                               % differential
\newcommand{\directprod}{\times}                                                                   % direct product
\newcommand{\extensionclass}{\mathrm{e}}                                                           % extension class of a 3-cohomology class
\newcommand{\ExtensionClasses}[2][]{\mathrm{Ext}_{#1}^{#2}}                                        % set of extension classes in a universe
\newcommand{\Extensions}[2][]{\underline{\mathrm{Ext}}_{#1}^{#2}}                                  % set of extensions in a Grothendieck universe
\newcommand{\extensionequivalent}[1][]{\approx_{#1}}                                               % extension equivalent
\newcommand{\Grp}{\mathbf{Grp}}                                                                    % category of groups
\newcommand{\HomotopyGroup}[1][]{\uppi_{#1}}                                                       % homotopy group
\newcommand{\id}{\mathrm{id}}                                                                      % identity
\newcommand{\inc}{\mathrm{inc}}                                                                    % inclusion morphism
\newcommand{\Integers}{\mathbb{Z}}                                                                 % set of integers
\newcommand{\intersection}{\cap}                                                                   % intersection
\newcommand{\isomorphic}{\cong}                                                                    % isomorphic
\newcommand{\kdelta}{\updelta}                                                                     % Kronecker delta
\newcommand{\Map}{\mathrm{Map}}                                                                    % mapping set
\newcommand{\map}{\rightarrow}                                                                     % map
\newcommand{\morphism}[1][]{\mathpalette\morchoice{#1}}                                            % morphism
\newcommand{\Naturals}{\mathbb{N}}                                                                 % set of natural numbers
\newcommand{\NormalSubgroupCrossedModule}[2]{[{#1} \normalsubgroupeq {#2}]}                        % normal subgroup crossed module
\newcommand{\normalsubgroupeq}{\trianglelefteq}                                                    % normal subgroup
\newcommand{\op}{\mathrm{op}}                                                                      % opposite category
\newcommand{\pointisation}[2][]{\ifthenelse{\equal{#1}{}}{{#2}^{\mathrm{pt}}}{{#2}^{\mathrm{pt}, #1}}} % pointisation of an n-cocycle
\newcommand{\pointiser}[2][]{\ifthenelse{\equal{#1}{}}{\mathrm{p}_{#2}}{\mathrm{p}_{#2}^{#1}}}     % pointiser of an n-cocycle
\newcommand{\quo}{\mathrm{quo}}                                                                    % quotient morphism
\newcommand{\Set}{\mathbf{Set}}                                                                    % category of sets
\newcommand{\StandardExtension}{\mathrm{E}}                                                        % standard extension with respect to a 3-cocycle
\newcommand{\standardsectionsystem}{\mathrm{s}}                                                    % standard section system
\newcommand{\structuremorphism}[1][]{\upmu^{#1}}                                                   % structure morphism of a crossed module
\newcommand{\TrivialHomomorphismCrossedModule}[2]{[{#1} \; {#2}]}                                  % trivial homomorphism module
\newcommand{\triv}{\mathrm{triv}}                                                                  % trivial morphism
\newcommand{\union}{\cup}                                                                          % union
\tikzset{equality/.style={-, double}}
\tikzset{exists/.style={dotted}}
\tikzset{diagram/.style={matrix of math nodes, row sep=#1, column sep=#1, text height=1.6ex, text depth=0.45ex, inner sep=0pt, nodes={inner sep=0.333em}}, diagram/.default=2.5em}
\title{The third cohomology group \\ classifies crossed module extensions}
\author{Sebastian Thomas}
\date{September 29, 2010}
\begin{document}

\maketitle

\renewcommand{\thefootnote}{\fnsymbol{footnote}}
\footnotetext[0]{Mathematics Subject Classification 2010: 20J06, 18D35.}
% 20J06: Cohomology of groups
% 18D35: Structured objects in a category (group objects, etc.)
\renewcommand{\thefootnote}{\arabic{footnote}}

\begin{abstract}
We give an elementary proof of the well-known fact that the third cohomology group \(\CohomologyGroup[3](G, M)\) of a group \(G\) with coefficients in an abelian \(G\)-module \(M\) is in bijection to the set \(\ExtensionClasses{2}(G, M)\) of equivalence classes of crossed module extensions of \(G\) with \(M\).
\end{abstract}

\section{Introduction} \label{sec:introduction}

This manuscript does not claim originality.

Perhaps the best-known result from group cohomology is the Schreier theorem, which gives an interpretation of the second cohomology group \(\CohomologyGroup[2](G, M)\) of a group \(G\) with coefficients in an abelian \(G\)-module \(M\). More precisely, it states that \(\CohomologyGroup[2](G, M)\) classifies group extensions of \(G\) with \(M\) in the sense that there is a bijection from \(\CohomologyGroup[2](G, M)\) to the set of extension classes \(\ExtensionClasses{1}(G, M)\) of group extensions of \(G\) with \(M\). By such a group extension we mean a short exact sequence of groups
\[M \morphism[\iota] E \morphism[\pi] G\]
for which the induced \(G\)-module structure on \(M\) coincides with the given one.

To give an interpretation of \(\CohomologyGroup[3](G, M)\), one has to consider crossed module extensions of \(G\) with \(M\) instead of group extensions. Roughly said, a crossed module extension of \(G\) with \(M\) is a four term exact sequence equipped with extra data such that the middle two terms form a crossed module.

The aim of this manuscript is to prove the following well-known theorem.
\begin{theorem*}[{cf.~\cite[th.~4.5]{holt:1979:an_interpretation_of_the_cohomology_groups_h_n_g_m},~\cite[p.~310]{huebschmann:1980:crossed_n-fold_extensions_of_groups_and_cohomology},~\cite[th.~9.4]{ratcliffe:1980:crossed_extensions}}]
Given a group \(G\) and an abelian \(G\)-module \(M\), we have
\[\ExtensionClasses{2}(G, M) \isomorphic \CohomologyGroup[3](G, M).\]
\end{theorem*}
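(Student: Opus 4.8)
The plan is to construct mutually inverse maps between $\ExtensionClasses{2}(G, M)$ and $\CohomologyGroup[3](G, M)$ by a direct cocycle calculus, avoiding any machinery from homotopy theory. A crossed module extension is a four term exact sequence
\[M \morphism[\iota] N \morphism[\chi] P \morphism[\pi] G\]
in which $N \morphism[\chi] P$ is a crossed module and the $G$-module structure induced on $\Kernel \chi = \Image \iota \isomorphic M$ (which is central in $N$ by the Peiffer identity, and on which $\Image \chi = \Kernel \pi$ acts trivially, so that $G = \Cokernel \chi$ acts) agrees with the given one. The forward map sends such an extension to a $3$-cocycle obtained by lifting the usual $2$-cocycle obstruction from $P$ into $N$ and then measuring the residual associativity defect inside $M$; the backward map sends a cocycle to a standard extension $\StandardExtension$ built from it, generalising the degree-two extension $M \times_c G$ one dimension up.

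First I would build the forward map. Choose a normalised set-theoretic section $s \colon G \map P$ of $\pi$ with $s(1) = 1$. For $g, h \in G$ the element $s(g) s(h) s(gh)^{-1}$ lies in $\Kernel \pi = \Image \chi$, so it may be lifted to a normalised $t \colon G \cart G \map N$ with $\chi(t(g, h)) = s(g) s(h) s(gh)^{-1}$. Comparing the two bracketings of $s(g) s(h) s(k)$ one checks that
\[({}^{s(g)} t(h, k)) \, t(g, hk) \quad \text{and} \quad t(g, h) \, t(gh, k)\]
have the same image under $\chi$, so their quotient defines
\[c(g, h, k) = \bigl( ({}^{s(g)} t(h, k)) \, t(g, hk) \bigr) \bigl( t(g, h) \, t(gh, k) \bigr)^{-1} \in \Kernel \chi = M.\]
The crossed module axioms then yield, by a pentagon-type bookkeeping, that $c \in \CocycleGroup[3](G, M)$; replacing $t$ by another lift changes $c$ by a coboundary, as does replacing $s$, so the class $[c] \in \CohomologyGroup[3](G, M)$ is well defined. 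Finally a morphism of crossed module extensions that is the identity on $M$ and $G$ transports one choice of $(s, t)$ to another, hence fixes $[c]$; this shows $[c]$ is invariant under the equivalence relation and produces $\Theta \colon \ExtensionClasses{2}(G, M) \map \CohomologyGroup[3](G, M)$.

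For the inverse I would attach to a normalised cocycle $c$ a standard crossed module extension $\StandardExtension(c)$ of $G$ with $M$, using a standard section system $\standardsectionsystem$: take $P$ to be a (free) group surjecting onto $G$ along $\standardsectionsystem$, build $N$ from the relators of this presentation together with a copy of $M$ glued in according to $c$, and let $\chi$ be the relator map with its natural $P$-action. One verifies directly that this is a crossed module extension inducing the given module structure, and that running the construction of the previous paragraph with the section $\standardsectionsystem$ returns exactly $c$, so that $\Theta(\StandardExtension(c)) = [c]$. It remains to see that $\StandardExtension$ descends to cohomology classes and that $\StandardExtension(\Theta(e)) \extensionequivalent e$ for every extension $e$, the latter by exhibiting a comparison morphism of extensions determined by a choice of $(s, t)$ for $e$. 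Together these give that $\Theta$ and the class of $\StandardExtension$ are mutually inverse bijections; if the additive (Baer-type) structures on both sides are in play, one checks in addition that $\Theta$ respects them.

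I expect the main obstacle to be organisational rather than conceptual, and it is concentrated in the equivalence relation on $\ExtensionClasses{2}(G, M)$. Since a morphism of crossed module extensions fixing $M$ and $G$ need not be invertible, the relation $\extensionequivalent$ is the one generated by such morphisms, i.e.\ by zig-zags; well-definedness of $\Theta$ only needs a single morphism to preserve $[c]$, but the comparison $\StandardExtension(\Theta(e)) \extensionequivalent e$ must be threaded through this generated relation, and controlling the intermediate extensions is the delicate point. The remaining difficulty is purely computational: verifying the $3$-cocycle identity for $c$ and the precise coboundary discrepancies under changes of $s$ and $t$ requires systematic use of the equivariance and Peiffer axioms, and these are the calculations where a sign or conjugation error is easiest to make.
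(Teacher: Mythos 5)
Your proposal follows essentially the same route as the paper: a normalised section $s$ of $\pi$ and a lift $t$ of the resulting non-abelian $2$-cocycle yield a $3$-cocycle measuring the associativity defect in $M$, and conversely a standard extension is built from a free group on $G$ with the relators twisted by the cocycle, the two being shown mutually inverse via a single comparison morphism out of the (free) standard extension. Your worry about threading $\StandardExtension(\Theta(e)) \extensionequivalent e$ through the generated zig-zag relation dissolves exactly as you suggest: the direct morphism $\StandardExtension(z^3) \map e$ suffices, and it is this freeness of the standard extension that also makes $\StandardExtension$ descend to cohomology classes.
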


A priori, the set of crossed module extension classes \(\ExtensionClasses{2}(G, M)\) is actually only a set, while the third cohomology group \(\CohomologyGroup[3](G, M)\) is an abelian group. So ``isomorphic'' in this theorem means that there is a bijection between \(\ExtensionClasses{2}(G, M)\) and \(\CohomologyGroup[3](G, M)\), which can of course be used to transport a group structure to \(\ExtensionClasses{2}(G, M)\). However, we will not pursue that possibility in this manuscript.

The proof presented here follows a sketch of \eigenname{Brown}~\cite[ch.~IV, sec.~5]{brown:1982:cohomology_of_groups}, while the techniques involved originally go back to \eigenname{Eilenberg} and \eigenname{Mac\,Lane}~\cite{eilenberg_maclane:1947:cohomology_theory_in_abstract_groups_I},~\cite{eilenberg_maclane:1947:cohomology_theory_in_abstract_groups_II_group_extensions_with_a_non-abelian_kernel},~\cite{maclane:1949:cohomology_theory_in_abstract_groups_III_operator_homomorphisms_of_kernels}. The cohomology class associated to a given crossed module extension class is constructed using certain lifts or sections in the underlying exact sequence of a representing crossed module extension. Conversely, to a given cohomology class we attach the extension class of a standard extension. Our proof allows to conclude that extensions in the same extension class are connected by at most two elementary steps (see corollary~\ref{cor:extension_equivalence_allows_chains_of_length_2}, cf.\ also~\cite[lem.~3.3]{holt:1979:an_interpretation_of_the_cohomology_groups_h_n_g_m}).

This manuscript sets the stage for~\cite{thomas:2009:on_the_second_cohomology_group_of_a_simplicial_group}, where we study the second cohomology group of a crossed module and, more generally, of a simplicial group. In that article, we will make explicit use of the constructions presented in this manuscript, in particular of the chosen sections and the \(3\)-cocycle constructed from them.

The result also appears in~\cite[th.~9.4]{ratcliffe:1980:crossed_extensions}. There is a more general result giving an interpretation of \(\CohomologyGroup[n + 1](G, M)\) in terms of extensions for all \(n \geq 1\). It has been independently proven by \eigenname{Holt}~\cite[th.~4.5]{holt:1979:an_interpretation_of_the_cohomology_groups_h_n_g_m} and \eigenname{Huebschmann}~\cite[p.~310]{huebschmann:1980:crossed_n-fold_extensions_of_groups_and_cohomology} and states that there is a bijection between \(\CohomologyGroup[n + 1](G, M)\) and the set of equivalence classes of so-called crossed \(n\)-fold extensions. To prove this theorem, \eigenname{Holt} uses universal delta functors, while \eigenname{Huebschmann} works with projective crossed resolutions. The author does not know whether there exists a proof of this more general result using lifts and sections in the spirit of \eigenname{Schreier} and of \eigenname{Eilenberg} and \eigenname{Mac\,Lane}. A summary of the development leading to this result can be found in the historical note of \eigenname{Mac\,Lane}~\cite{maclane:1979:historical_note}. A version in terms of \eigenname{Loday}s \(n\)-categorical groups can be found in~\cite[th.~4.2]{loday:1982:spaces_with_finitely_many_non-trivial_homotopy_groups}. Finally, an interpretation of \(\CohomologyGroup[4](G, M)\) using \eigenname{Conduch{\'e}}s \(2\)-crossed modules can be found in~\cite[th.~4.7]{conduche:1984:modules_croises_generalises_de_longeur_2}.

\paragraph{Outline} \label{par:outline}

We start in section~\ref{sec:preliminaries} with some preliminaries on groups and crossed modules. We show in section~\ref{sec:componentwise_pointed_cochains} that group cohomology can be expressed using componentwise pointed cochains. In section~\ref{sec:crossed_module_extensions_and_their_equivalence_classes}, we give the definitions of crossed module extensions and consider some examples. Thereafter, we show in section~\ref{sec:the_associated_cohomology_class} how a \(3\)-cohomology class can be associated to a given crossed module extension. Conversely, in section~\ref{sec:the_standard_extension} we construct a standard extension with respect to a given cocycle and show that both constructions are mutually inverse. This finally proves the classification theorem.

\subsection*{Conventions and notations}

\begin{itemize}
\item The composite of morphisms \(f\colon X \map Y\) and \(g\colon Y \map Z\) is usually denoted by \(f g\colon X \map Z\).
\item Given a complex of abelian groups \(A\) such that \(A^n = 0\) for \(n < 0\), we usually do not denote these zero objects.
\item We use the notations \(\Naturals = \{1, 2, 3, \dots\}\) and \(\Naturals_0 = \Naturals \cup \{0\}\).
\item Given a map \(f\colon X \rightarrow Y\) and subsets \(X' \subseteq X\), \(Y' \subseteq Y\) with \(X' f \subseteq Y'\), we write \(f|_{X'}^{Y'}\colon X' \rightarrow Y', x' \mapsto x' f\). Moreover, we abbreviate \(f|_{X'} := f|_{X'}^Y\) and \(f|^{Y'} := f|_X^{Y'}\).
\item Given integers \(a, b \in \Integers\), we write \([a, b] := \{z \in \Integers \mid a \leq z \leq b\}\) for the set of integers lying between \(a\) and \(b\). If we need to specify orientation, then we write \(\ascendinginterval{a}{b} := (z \in \Integers \mid a \leq z \leq b)\) for the \newnotion{ascending interval} and \(\descendinginterval{a}{b} = (z \in \Integers \mid a \geq z \geq b)\) for the \newnotion{descending interval}. Whereas we formally deal with tuples, we use the element notation, for example we write \(\prod_{i \in \ascendinginterval{1}{3}} g_i = g_1 g_2 g_3\) and \(\prod_{i \in \descendinginterval{3}{1}} g_i = g_3 g_2 g_1\) or \((g_i)_{i \in \descendinginterval{3}{1}} = (g_3, g_2, g_1)\) for group elements \(g_1\), \(g_2\), \(g_3\).
\item Given tuples \((x_i)_{i \in I}\) and \((x_j)_{j \in J}\) with disjoint index sets \(I\) and \(J\), we write \((x_i)_{i \in I} \union (x_j)_{j \in J}\) for their concatenation.
\item Given groups \(G\) and \(H\), we denote by \(\triv\colon G \map H\) the trivial group homomorphism \(g \mapsto 1\).
\item Given a group \(G\), a subgroup \(U\) of \(G\) and a quotient group \(Q\) of \(G\), we denote by \(\inc = \inc^U\colon U \map G\) the inclusion \(u \mapsto u\) and by \(\quo = \quo^Q\colon G \map Q\) the quotient morphism.
\item Given a group homomorphism \(\varphi\colon G \map H\), we denote its kernel by \(\Kernel \varphi\), its cokernel by \(\Cokernel \varphi\) and its image by \(\Image \varphi\).
\item The distinguished point in a pointed set \(X\) will be denoted by \(* = *^X\).
\item The Kronecker delta is defined by
\[\kdelta_{x, y} = \begin{cases} 1 & \text{for \(x = y\),} \\ 0 & \text{for \(x \neq y\),} \end{cases}\]
where \(x\) and \(y\) are elements of some set.
\end{itemize}

\paragraph{A remark on Grothendieck universes} \label{par:a_remark_on_grothendieck_universes}

To avoid set-theoretical difficulties, we work with Grothendieck universes~\cite[exp.~I, sec.~0]{artin_grothendieck_verdier:1972:sga_4_1} in this manuscript. In particular, every category has an object \emph{set} and a morphism \emph{set}.

We suppose given a Grothendieck universe \(\mathfrak{U}\). A \newnotion{set in \(\mathfrak{U}\)} (or \(\mathfrak{U}\)-set) is a set that is an element of \(\mathfrak{U}\), a \newnotion{map in \(\mathfrak{U}\)} (or \(\mathfrak{U}\)-map) is a map between \(\mathfrak{U}\)-sets. The \newnotion{category of \(\mathfrak{U}\)-sets} consisting of the set of \(\mathfrak{U}\)-sets, that is, of \(\mathfrak{U}\), as object set and the set of \(\mathfrak{U}\)-maps as morphism set will be denoted by \(\Set_{(\mathfrak{U})}\). A \newnotion{group in \(\mathfrak{U}\)} (or \(\mathfrak{U}\)-group) is a group whose underlying set is a \(\mathfrak{U}\)-set, a \newnotion{group homomorphism in \(\mathfrak{U}\)} (or \(\mathfrak{U}\)-group homomorphism) is a group homomorphism between \(\mathfrak{U}\)-groups. The \newnotion{category of \(\mathfrak{U}\)-groups} consisting of the set of \(\mathfrak{U}\)-groups as object set and the set of \(\mathfrak{U}\)-group homomorphisms as morphism set will be denoted by \(\Grp_{(\mathfrak{U})}\). Similarly for (abelian) \(G\)-modules, etc.

Because we do not want to overload our text with the usage of Grothendieck universes, we may suppress them in notation, provided we work with a single fixed Grothendieck universe. For example, instead of
\begin{remark*}
We suppose given a Grothendieck universe \(\mathfrak{U}\). The forgetful functor \(\Grp_{(\mathfrak{U})} \map \Set_{(\mathfrak{U})}\) is faithful.
\end{remark*}
we may just write
\begin{remark*}
The forgetful functor \(\Grp \map \Set\) is faithful.
\end{remark*}

Grothendieck universes will play a role when we consider extension classes of crossed module extensions, cf.\ section~\ref{sec:crossed_module_extensions_and_their_equivalence_classes}.

\section{Preliminaries} \label{sec:preliminaries}

\subsection*{Sections and lifts} \label{ssec:sections_and_lifts}

We suppose given a category \(\mathcal{C}\), objects \(X, Y, Z \in \Ob \mathcal{C}\) and morphisms \(f \in {_{\mathcal{C}}}(X, Y)\), \(g \in {_{\mathcal{C}}}(Z, Y)\). A \newnotion{section} of \(f\) is a morphism \(s \in {_{\mathcal{C}}}(Y, X)\) such that \(s f = 1_Y\). A \newnotion{lift} of \(g\) along \(f\) is a morphism \(l \in {_{\mathcal{C}}}(Z, X)\) such that \(g = l f\).

If \(f\) is a retraction, then the sections of \(f\) are exactly the lifts of \(1_Y\) along \(f\). Moreover, every section \(s\) of \(f\) defines a lift \(l\) of \(g\) along \(f\) by \(l := g s\).

\subsection*{Free groups} \label{ssec:free_groups}

We suppose given a set \(X\). Recall that a \newnotion{free group} on \(X\) consists of a group \(F\) together with a map \(e\colon X \map F\) such that for every group \(G\) and every map \(f\colon X \map G\) there exists a unique group homomorphism \(\varphi\colon F \map G\) with \(f = e \varphi\).
\[\begin{tikzpicture}[baseline=(m-2-1.base)]
  \matrix (m) [diagram]{
    F & G \\
    X & \\};
  \path[->, font=\scriptsize]
    (m-1-1) edge[exists] node[above] {\(\varphi\)} (m-1-2)
    (m-2-1) edge node[left] {\(e\)} (m-1-1)
            edge node[right] {\(f\)} (m-1-2);
\end{tikzpicture}\]
By abuse of notation, we often refer to the free group (consisting of the group \(F\) and the map \(e\)) as well as to its underlying group by \(F\). The map \(e\) is said to be the (\newnotion{ordered}) \newnotion{basis} of the free group \(F\). Given a free group \(F\) on \(X\) with basis \(e\), we write \(\basis = \basis^F := e\). The elements of \(\Image \basis^F\) are called \newnotion{free generators} of \(F\).

There exists a free group on every set \(X\), see for example~\cite[ch.~I, prop.~12.1]{lang:2002:algebra}. Moreover, the Nielsen-Schreier Theorem states that every subgroup \(U\) of a free group \(F\) is again a free group, and it describes explicitly a set of free generators of \(U\), see for example~\cite[\S 36, p.~36]{kurosh:1960:the_theory_of_groups}. We will apply this theorem in proposition~\ref{prop:standard_extension_of_a_3-cocycle}.

Since a group has a natural underlying pointed set with the neutral element as distinguished point, we can also define free groups on pointed sets: We suppose given a pointed set \(X\). A \newnotion{free group} on \(X\) consists of a group \(F\) together with a pointed map \(e\colon X \map F\) such that for every group \(G\) and every pointed map \(f\colon X \map G\) there exists a unique group homomorphism \(\varphi\colon F \map G\) with \(f = e \varphi\).
\[\begin{tikzpicture}[baseline=(m-2-1.base)]
  \matrix (m) [diagram]{
    F & G \\
    X & \\};
  \path[->, font=\scriptsize]
    (m-1-1) edge[exists] node[above] {\(\varphi\)} (m-1-2)
    (m-2-1) edge node[left] {\(e\)} (m-1-1)
            edge node[right] {\(f\)} (m-1-2);
\end{tikzpicture}\]
By abuse of notation, we often refer to the free group as well as to its underlying group by \(F\). The morphism \(e\) is said to be the (\newnotion{ordered}) \newnotion{basis} of the free group \(F\). Given a free group \(F\) on \(X\) with basis \(e\), we write \(\basis = \basis^F := e\). The elements of \(\Image \basis^F\) are called \newnotion{free generators} of \(F\).

We suppose given a pointed set \(X\). Roughly speaking, a free group on \(X\) is a free group on the set \(X \setminus \{*\}\). More precisely: Given a free group \(F\) on the pointed set \(X\), we obtain a free group \(F'\) on the set \(X \setminus \{*\}\) with underlying group \(F\) and basis \(\basis^{F'} = \basis^F|_{X \setminus \{*\}}\). Conversely, given a free group \(F'\) on the set \(X \setminus \{*\}\), we obtain a free group \(F\) on the \(X\) pointed set with underlying group \(F'\) and basis \(\basis^F\) defined by
\[x \basis^F = \begin{cases} x \basis^{F'} & \text{if \(x \in X \setminus \{*\}\),} \\ 1 & \text{if \(x = *\).} \end{cases}\]

\subsection*{Group actions} \label{ssec:group_actions}

We suppose given a category \(\mathcal{C}\) and a group \(G\). Recall that a (\newnotion{group}) \newnotion{action} of \(G\) on an object \(X \in \Ob \mathcal{C}\) is a group homomorphism \(\alpha\colon G^\op \map \AutomorphismGroup_{\mathcal{C}} X\).

A \newnotion{\(G\)-module} consists of a (not necessarily abelian) group \(M\) together with an action \(\alpha\) of \(G\) on \(M\), that is, a group homomorphism \(\alpha\colon G^\op \map \AutomorphismGroup_{\Grp} M\). By abuse of notation, we often refer to the module over \(G\) as well as to its underlying group by \(M\). The action \(\alpha\) is called the \newnotion{\(G\)-action} of the \(G\)-module \(M\). Given a \(G\)-module \(M\) with \(G\)-action \(\alpha\), we often write \({^g}m := m (g \alpha)\) for \(m \in M\), \(g \in G\). A \(G\)-module \(M\) is said to be \newnotion{abelian} if its underlying group is abelian. As usual, we often write \(M\) additively in this case, and we write \(g m = g \act m := m (g \alpha)\) for \(m \in M\), \(g \in G\), where \(\alpha\) denotes the \(G\)-action of \(M\).

A \(G\)-module structure on \(G\) itself is provided by the conjugation homomorphism \(G^\op \map \AutomorphismGroup G, g \mapsto {^g}(-)\), where \({^g}x = g x g^{- 1}\) for \(x, g \in G\).

\subsection*{Cohomology of groups} \label{ssec:cohomology_of_groups}

We suppose given an abelian \(G\)-module \(M\). The \newnotion{cochain complex} of \(G\) is the complex of abelian groups \(\CochainComplex(G, M) = \CochainComplex_{\Grp}(G, M)\) with entries \(\CochainComplex[n](G, M) := \Map(G^{\cart n}, M)\) and differentials given by
\begin{align*}
(g_j)_{j \in \descendinginterval{n}{0}} (c \differential) & = (g_{j + 1})_{j \in \descendinginterval{n - 1}{0}} c + \sum_{k \in [1, n]} (- 1)^k ((g_{j + 1})_{j \in \descendinginterval{n - 1}{k}} \union (g_k g_{k - 1}) \union (g_j)_{j \in \descendinginterval{k - 2}{0}}) c \\
& \qquad + (- 1)^{n + 1} g_n (g_j)_{j \in \descendinginterval{n - 1}{0}} c
\end{align*}
for \((g_j)_{j \in \descendinginterval{n}{0}} \in G^{\cart n}\), \(c \in \CochainComplex[n](G, M)\), \(n \in \Naturals_0\). Moreover, 
we define the \newnotion{\(n\)-th cocycle group} \(\CocycleGroup[n](G, M) := \CocycleGroup[n] \CochainComplex(G, M)\), the \newnotion{\(n\)-th coboundary group} \(\CoboundaryGroup[n](G, M) := \CoboundaryGroup[n] \CochainComplex(G, M)\) and the \newnotion{\(n\)-th cohomology group} \linebreak % manual format
\(\CohomologyGroup[n](G, M) := \CohomologyGroup[n] \CochainComplex(G, M) = \CocycleGroup[n](G, M) / \CoboundaryGroup[n](G, M)\) of \(G\) with coefficients in \(M\). An element \(c \in \CochainComplex[n](G, M)\) resp.\ \(z \in \CocycleGroup[n](G, M)\) resp.\ \(b \in \CocycleGroup[n](G, M)\) resp.\ \(h \in \CohomologyGroup[n](G, M)\) is said to be an \newnotion{\(n\)-cochain} resp.\ an \newnotion{\(n\)-cocycle} resp.\ an \newnotion{\(n\)-coboundary} resp.\ an \newnotion{\(n\)-cohomology class} of \(G\) with coefficients in \(M\).

\subsection*{Crossed modules} \label{ssec:crossed_modules}

A \newnotion{crossed module} consists of a group \(G\), a \(G\)-module \(M\) and a group homomorphism \(\mu\colon M \rightarrow G\) such that the following two axioms hold.
\begin{itemize}
\item[(Equi)] \emph{Equivariance}. We have \(({^g}m) \mu = {^g}(m \mu)\) for all \(m \in M\), \(g \in G\).
\item[(Peif)] \emph{Peiffer identity}. We have \({^{n \mu}}m = {^n}m\) for all \(m, n \in M\).
\end{itemize}
Here, \(G\) acts on \(G\) via conjugation, and so does \(M\) on \(M\). We call \(G\) the \newnotion{group part} and \(M\) the \newnotion{module part} of the crossed module. The group homomorphism \(\mu\colon M \rightarrow G\) is said to be the \newnotion{structure morphism} of the crossed module. Given a crossed module \(V\) with group part \(G\), module part \(M\) and structure morphism \(\mu\), we write \(\GroupPart V := G\), \(\ModulePart V := M\) and \(\structuremorphism = \structuremorphism[V] := \mu\).

We let \(V\) and \(W\) be crossed modules. A \newnotion{morphism of crossed modules} (or \newnotion{crossed module morphism}) from \(V\) to \(W\) consists of group homomorphisms \(\varphi_0\colon \GroupPart V \rightarrow \GroupPart W\) and \(\varphi_1\colon \ModulePart V \rightarrow \ModulePart W\) such that \(\varphi_1 \structuremorphism[W] = \structuremorphism[V] \varphi_0\) and such that \(({^g}m)\varphi_1 = {^{g \varphi_0}}(m \varphi_1)\) holds for all \(m \in \ModulePart V\), \(g \in \GroupPart V\). The group homomorphisms \(\varphi_0\) resp.\ \(\varphi_1\) are said to be the \newnotion{group part} resp.\ the \newnotion{module part} of the morphism of crossed modules. Given a crossed module morphism \(\varphi\) from \(V\) to \(W\) with group part \(\varphi_0\) and module part \(\varphi_1\), we write \(\GroupPart \varphi := \varphi_0\) and \(\ModulePart \varphi := \varphi_1\). Composition of morphisms of crossed modules is defined by composition on the group parts and on the module parts.

Let us consider two examples: Given a group \(G\) and a normal subgroup \(N \normalsubgroupeq G\), the inclusion \(\inc\colon N \map G\) together with the conjugation action of \(G\) on \(N\) yields the crossed module \(\NormalSubgroupCrossedModule{N}{G}\), called \newnotion{normal subgroup crossed module}. On the other hand, given a group \(G\) and an abelian \(G\)-module \(M\), the trivial homomorphism \(\triv\colon M \map G\) yields the crossed module \(\TrivialHomomorphismCrossedModule{M}{G}\), called \newnotion{trivial homomorphism crossed module}.

We let \(\mathfrak{U}\) be a Grothendieck universe. A crossed module \(V\) is said to be \newnotion{in \(\mathfrak{U}\)} (or a \newnotion{\(\mathfrak{U}\)-crossed module}) if \(\GroupPart V\) is a group in \(\mathfrak{U}\) and \(\ModulePart V\) is a \(G\)-module in \(\mathfrak{U}\). The \newnotion{category of \(\mathfrak{U}\)-crossed modules} consisting of \(\mathfrak{U}\)-crossed modules as objects and morphisms of \(\mathfrak{U}\)-crossed modules as morphisms will be denoted by \(\CrMod = \CrMod_{(\mathfrak{U})}\).

Given a crossed module \(V\), the image \(\Image \structuremorphism\) is a normal subgroup of \(\GroupPart V\) and the kernel \(\Kernel \structuremorphism\) is a central subgroup of \(\ModulePart V\). Moreover, the action of \(\GroupPart V\) on \(\ModulePart V\) restricts to a trivial action of \(\Image \structuremorphism\) on \(\Kernel \structuremorphism\). See for example~\cite[prop.~(5.3)]{thomas:2007:co_homology_of_crossed_modules}.

The \newnotion{homotopy groups} of \(V\) are defined by
\[\HomotopyGroup[n](V) := \begin{cases} \Cokernel \structuremorphism & \text{for \(n = 0\),} \\ \Kernel \structuremorphism & \text{for \(n = 1\),} \\ \{1\} & \text{for \(n \in \Naturals_0 \setminus \{0, 1\}\).} \end{cases}\]
Thus \(\HomotopyGroup[1](V)\) carries the structure of an abelian \(\HomotopyGroup[0](V)\)-module, where the action of \(\HomotopyGroup[0](V)\) on \(\HomotopyGroup[1](V)\) is induced by the action of \(\GroupPart V\) on \(\ModulePart V\), that is, for \(k \in \HomotopyGroup[1](V)\) and \(p \in \HomotopyGroup[0](V)\) we have \({^p}k = {^g}k\) for any \(g \in \GroupPart V\) with \(g (\Image \structuremorphism) = p\).

Given crossed modules \(V\) and \(W\), a crossed module morphism \(\varphi\colon V \map W\) is said to be a \newnotion{weak homotopy equivalence} if it induces isomorphisms \(\HomotopyGroup[n](V) \map \HomotopyGroup[n](W)\) for all \(n \in \Naturals_0\).

\begin{notation*}
Given a crossed module \(V\), the module part \(\ModulePart V\) resp.\ its opposite \((\ModulePart V)^{\op}\)) act on (the underlying set of) the group part \(\GroupPart V\) by \(m g := (m \structuremorphism) g\) and \(g m := g (m \structuremorphism)\) for \(m \in \ModulePart V\), \(g \in \GroupPart V\). Using this, we get for example
\[{^{m g}}n = {^{(m \structuremorphism) g}}n = {^{m \structuremorphism}}({^g}n) = {^m}({^g}n)\]
and
\[g m = g (m \structuremorphism) = {^g}(m \structuremorphism) g = (({^g} m) \structuremorphism) g = ({^g}m) g\]
for \(m, n \in \ModulePart V\), \(g \in \GroupPart V\). Also note that \((m g) n = m (g n)\) for \(m, n \in \ModulePart V\), \(g \in \GroupPart V\).

Given a set \(X\) and a map \(f\colon \GroupPart V \map X\), we usually write \(m f := m \structuremorphism f\) for \(m \in \ModulePart V\). Similarly for maps \(\GroupPart V \cart \GroupPart V \map X\), etc.

Moreover, given crossed modules \(V\) and \(W\) and a morphism of crossed modules \(\varphi\colon V \map W\), we may write \(m \varphi\) and \(g \varphi\) instead of \(m (\ModulePart \varphi)\) and \(g (\GroupPart \varphi)\). Using this, we have
\begin{align*}
(m g) \varphi & = ((m \structuremorphism[V]) g) (\GroupPart \varphi) = (m \structuremorphism[V] (\GroupPart \varphi)) (g (\GroupPart \varphi)) = (m (\ModulePart \varphi) \structuremorphism[W]) (g (\GroupPart \varphi)) = (m \varphi) (g \varphi)
\end{align*}
for \(m \in \ModulePart V\), \(g \in \GroupPart V\).
\end{notation*}

\section{Componentwise pointed cochains} \label{sec:componentwise_pointed_cochains}

We suppose given a group \(G\), an abelian \(G\)-module \(M\) and a non-negative integer \(n \in \Naturals_0\). Then \(G\) resp.\ \(M\) can naturally be considered as pointed sets with \(1\) resp.\ \(0\) as distinguished points. We want to make use of those cochains of \(G\) with coefficients in \(M\) that preserve these distinguished points.

This section follows~\cite[ch.~II, sec.~6]{eilenberg_maclane:1947:cohomology_theory_in_abstract_groups_I}.

\begin{definition}[componentwise pointed maps] \label{def:componentwise_pointed_maps}
We suppose given pointed sets \(X_i\) for \(i \in I\) and \(Y\), where \(I\) is an index set. A map \(f\colon \bigcart_{i \in I} X_i \map Y\) is said to be \newnotion{componentwise pointed} if \((x_i)_{i \in I} f = *\) for all \((x_i)_{i \in I} \in \bigcart_{i \in I} X_i\) with \(x_i = *\) for some \(i \in I\).
\end{definition}

\begin{definition}[componentwise pointed cochains] \label{def:componentwise_pointed_cochains}
The subset of \(\CochainComplex[n](G, M)\) consisting of all componentwise pointed \(n\)-cochains of \(G\) with coefficients in \(M\) will be denoted by
\[\CochainComplex[n]_{\text{cpt}}(G, M) = \CochainComplex[n]_{\Grp, \text{cpt}}(G, M) := \{c \in \CochainComplex[n](G, M) \mid \text{\(c\) componentwise pointed}\}.\]
Moreover, we set
\[\CocycleGroup[n]_{\text{cpt}}(G, M) = \CocycleGroup[n]_{\Grp, \text{cpt}}(G, M) := \CochainComplex[n]_{\text{cpt}}(G, M) \intersection \CocycleGroup[n](G, M)\]
for the set of componentwise pointed \(n\)-cocycles and
\[\CoboundaryGroup[n]_{\text{cpt}}(G, M) = \CoboundaryGroup[n]_{\Grp, \text{cpt}}(G, M) := \CochainComplex[n]_{\text{cpt}}(G, M) \intersection \CoboundaryGroup[n](G, M)\]
for the set of componentwise pointed \(n\)-coboundaries and
\[\CohomologyGroup[n]_{\text{cpt}}(G, M) = \CohomologyGroup[n]_{\Grp, \text{cpt}}(G, M) := \CocycleGroup[n]_{\text{cpt}}(G, M) / \CoboundaryGroup[n]_{\text{cpt}}(G, M)\]
for the set of componentwise pointed \(n\)-cohomology classes of \(G\) with coefficients in \(M\).
\end{definition}

The next definition is for technical purposes.

\begin{definition}[\(k\)-pointed cochains] \label{def:k-pointed_cochains}
We suppose given \(k \in [0, n]\). An \(n\)-cochain \(c \in \CochainComplex[n](G, M)\) is said to be \newnotion{\(k\)-pointed} if \((g_j)_{j \in \descendinginterval{n - 1}{0}} c = 0\) for all \((g_j)_{j \in \descendinginterval{n - 1}{0}} \in G^{\cart n}\) with \(g_j = 1\) for some \(j \in \descendinginterval{k - 1}{0}\).
\end{definition}

By definition, a \(0\)-pointed \(n\)-cochain is just an arbitrary \(n\)-cochain, while an \(n\)-pointed \(n\)-cochain is actually a componentwise pointed \(n\)-cochain.

\begin{remark} \label{rem:k-pointedness_and_differentials}
We suppose given \(k \in [0, n]\) and an \(n\)-cochain \(c \in \CochainComplex[n](G, M)\).
\begin{enumerate}
\item \label{rem:k-pointedness_and_differentials:k-pointedness} If \(c\) is \(k\)-pointed, then \(c \differential\) is \(k\)-pointed.
\item \label{rem:k-pointedness_and_differentials:componentwise_pointedness} If \(c\) is componentwise pointed, then \(c \differential\) is componentwise pointed.
\end{enumerate}
\end{remark}
\begin{proof} \
\begin{enumerate}
\item We suppose that \(c\) is \(k\)-pointed, and we let \(g_j \in G\) for \(j \in \descendinginterval{n}{0}\) be given with \(g_l = 1\) for some \(l \in \descendinginterval{k - 1}{0}\). For \(l = 0\), we have
\begin{align*}
(g_j)_{j \in \descendinginterval{n}{0}} (c \differential) & = (g_{j + 1})_{j \in \descendinginterval{n - 1}{0}} c + \sum_{i \in [1, n]} (- 1)^i ((g_{j + 1})_{j \in \descendinginterval{n - 1}{i}} \union (g_i g_{i - 1}) \union (g_j)_{j \in \descendinginterval{i - 2}{0}}) c \\
& \qquad + (- 1)^{n + 1} g_n (g_j)_{j \in \descendinginterval{n - 1}{0}} c \\
& = (g_{j + 1})_{j \in \descendinginterval{n - 1}{0}} c - ((g_{j + 1})_{j \in \descendinginterval{n - 1}{1}} \union (g_1)) c = 0,
\end{align*}
and for \(l \in \descendinginterval{k - 1}{1}\), we have
\begin{align*}
& (g_j)_{j \in \descendinginterval{n}{0}} (c \differential) \\
& = (g_{j + 1})_{j \in \descendinginterval{n - 1}{0}} c + \sum_{i \in [1, n]} (- 1)^i ((g_{j + 1})_{j \in \descendinginterval{n - 1}{i}} \union (g_i g_{i - 1}) \union (g_j)_{j \in \descendinginterval{i - 2}{0}}) c \\
& \qquad + (- 1)^{n + 1} g_n (g_j)_{j \in \descendinginterval{n - 1}{0}} c \\
& = (- 1)^l ((g_{j + 1})_{j \in \descendinginterval{n - 1}{l}} \union (g_{l - 1}) \union (g_j)_{j \in \descendinginterval{l - 2}{0}}) c \\
& \qquad + (- 1)^{l + 1} ((g_{j + 1})_{j \in \descendinginterval{n - 1}{l + 1}} \union (g_{l + 1}) \union (g_j)_{j \in \descendinginterval{l - 1}{0}}) c \\
& = (- 1)^l ((g_{j + 1})_{j \in \descendinginterval{n - 1}{l}} \union (g_j)_{j \in \descendinginterval{l - 1}{0}}) c + (- 1)^{l + 1} ((g_{j + 1})_{j \in \descendinginterval{n - 1}{l}} \union (g_j)_{j \in \descendinginterval{l - 1}{0}}) c = 0.
\end{align*}
Hence \(c \differential\) is also \(k\)-pointed.
\item We suppose that \(c\) is componentwise pointed. Then \(c \differential\) is \(n\)-pointed by~\ref{rem:k-pointedness_and_differentials:k-pointedness}. Moreover, given \(g_j \in G\) for \(j \in \descendinginterval{n - 1}{0}\), we have
\begin{align*}
& ((1) \union (g_j)_{j \in \descendinginterval{n - 1}{0}}) (c \differential) \\
& = ((1) \union (g_{j + 1})_{j \in \descendinginterval{n - 2}{0}}) c + \sum_{i \in [1, n - 1]} (- 1)^i ((1) \union (g_{j + 1})_{j \in \descendinginterval{n - 2}{i}} \union (g_i g_{i - 1}) \union (g_j)_{j \in \descendinginterval{i - 2}{0}}) c \\
& \qquad + (- 1)^n ((g_{n - 1}) \union (g_j)_{j \in \descendinginterval{n - 2}{0}}) c + (- 1)^{n + 1} (g_j)_{j \in \descendinginterval{n - 1}{0}} c \\
& = (- 1)^n (g_j)_{j \in \descendinginterval{n - 1}{0}} c + (- 1)^{n + 1} (g_j)_{j \in \descendinginterval{n - 1}{0}} c = 0. \qedhere
\end{align*}
\end{enumerate}
\end{proof}

\begin{definition}[componentwise pointisation of \(n\)-cocycles] \label{def:componentwise_pointisation_of_n-cocycles}
Given an \(n\)-cochain \(c \in \CochainComplex[n](G, M)\), the \newnotion{\(k\){\nbd}pointisation} \(\pointisation[k]{c} \in \CochainComplex[n](G, M)\) of \(c\) for \(k \in [0, n]\) is given recursively by
\[\pointisation[k]{c} := \begin{cases} c & \text{if \(k = 0\),} \\ \pointisation[k - 1]{c} - \pointiser[k]{c} \differential & \text{if \(k \in [1, n]\),} \end{cases}\]
where the \newnotion{\(k\)-pointiser} of \(c\) for \(k \in [1, n]\) is defined to be the \((n - 1)\)-cochain \(\pointiser[k]{c} \in \CochainComplex[n - 1](G, M)\) given by
\[(g_j)_{j \in \descendinginterval{n - 2}{0}} \pointiser[k]{c} := (- 1)^k ((g_{j - 1})_{j \in \descendinginterval{n - 1}{k}} \union (1) \union (g_j)_{j \in \descendinginterval{k - 2}{0}}) \pointisation[k - 1]{c}\]
for \(g_j \in G_j\), \(j \in \descendinginterval{n - 2}{0}\).
\end{definition}

\begin{proposition} \label{prop:k-pointisation_of_cochain_with_componentwise_pointed_coboundary_is_k-pointed}
We suppose given an \(n\)-cochain \(c \in \CochainComplex[n](G, M)\) such that \(c \differential\) is componentwise pointed. Then \(\pointisation[k]{c}\) is \(k\)-pointed for all \(k \in [0, n]\).
\end{proposition}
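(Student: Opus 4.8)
The plan is to induct on $k$, the case $k = 0$ being vacuous since every $n$-cochain is $0$-pointed by convention. For the inductive step I assume $\pointisation[k - 1]{c}$ is $(k - 1)$-pointed and show that $\pointisation[k]{c} = \pointisation[k - 1]{c} - \pointiser[k]{c} \differential$ vanishes on every tuple $(g_j)_{j \in \descendinginterval{n - 1}{0}} \in G^{\cart n}$ with $g_l = 1$ for some $l \in \descendinginterval{k - 1}{0}$.

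One global observation will be used throughout: as $\pointisation[k]{c}$ is obtained from $c$ by successively subtracting coboundaries, $\differential \comp \differential = 0$ yields $\pointisation[k]{c} \differential = c \differential$ for every $k$, so $\pointisation[k]{c} \differential$ is componentwise pointed by hypothesis. I first dispose of the indices $l \in \descendinginterval{k - 2}{0}$. By its defining formula, $\pointiser[k]{c}$ evaluated on a tuple equals, up to sign, the value of $\pointisation[k - 1]{c}$ on the enlarged tuple obtained by inserting a $1$ in position $k - 1$; the entries of $\pointiser[k]{c}$ indexed by $\descendinginterval{k - 2}{0}$ occupy the same positions in this enlarged tuple, so if one of them is $1$ the inductive hypothesis forces the value to vanish. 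Hence $\pointiser[k]{c}$ is $(k - 1)$-pointed, and by remark~\ref{rem:k-pointedness_and_differentials}~(a) so is $\pointiser[k]{c} \differential$. On a tuple with $g_l = 1$ for some $l \in \descendinginterval{k - 2}{0}$ both $\pointisation[k - 1]{c}$ (inductive hypothesis) and $\pointiser[k]{c} \differential$ then vanish, and therefore so does their difference $\pointisation[k]{c}$.

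It remains to treat the genuinely new index $g_{k - 1} = 1$. Here I expand $\pointiser[k]{c} \differential$ on the given tuple by the explicit coboundary formula and unfold every summand through the insertion rule defining $\pointiser[k]{c}$. Three things happen. The two face terms neighbouring the inserted $1$ become literally equal once $g_{k - 1} = 1$ and cancel by their opposite signs. Every summand whose substitution drops a $1$ into a position indexed by $\descendinginterval{k - 2}{0}$ is annihilated by the $(k - 1)$-pointedness of $\pointisation[k - 1]{c}$. What is left, together with the term $(g_j)_{j \in \descendinginterval{n - 1}{0}} \pointisation[k - 1]{c}$, is precisely the expansion of $\pointisation[k - 1]{c} \differential = c \differential$ on an $(n + 1)$-tuple carrying an entry equal to $1$; since $c \differential$ is componentwise pointed this value is $0$. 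Thus $(g_j)_{j \in \descendinginterval{n - 1}{0}} \pointisation[k]{c} = 0$ in this case as well, completing the induction.

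I expect the last paragraph to be the crux. All the real work — matching the faces, tracking the signs $(-1)^i$ and the module action in the final coboundary term, and verifying that the leftover terms really do reassemble into a single value of $c \differential$ rather than merely cancelling in pairs — sits there, and it is exactly at this point that the hypothesis enters, through the componentwise pointedness of $c \differential$. The cleanest formulation is to exhibit one explicit $(n + 1)$-tuple, with the inserted $1$ placed adjacent to $g_{k - 1}$, whose $c \differential$-value coincides term by term with $(g_j)_{j \in \descendinginterval{n - 1}{0}} \pointisation[k]{c}$ once the $(k - 1)$-pointed terms have been discarded.
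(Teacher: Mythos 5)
Your proposal follows the paper's proof essentially verbatim: the same induction on \(k\), the same key observation that \(\pointisation[k]{c} \differential = c \differential\) because the pointisations differ from \(c\) by coboundaries, and the same splitting of the inductive step into the old indices \(\descendinginterval{k-2}{0}\) (handled by the \((k-1)\)-pointedness of \(\pointiser[k]{c}\) together with remark~\ref{rem:k-pointedness_and_differentials}) and the new index \(k-1\) (handled by reassembling the surviving terms into a value of \(c \differential\) on a longer tuple containing a \(1\)). The computation you correctly identify as the crux and describe accurately -- cancellation of the two faces adjacent to the inserted \(1\), annihilation of the remaining low-index terms by \((k-1)\)-pointedness, and reassembly of what is left into \((-1)^{k-1}\) times \(c\differential\) evaluated on \(((g_{j-1})_{j \in \descendinginterval{n}{k+1}} \union (1) \union (1) \union (g_j)_{j \in \descendinginterval{k-2}{0}})\) -- is exactly what the paper carries out, including a separate but entirely analogous verification for the boundary case \(k = n\).
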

\begin{proof}
We proceed by induction on \(k \in [0, n]\), where for \(k = 0\) there is nothing to do. So let us suppose given \(k \in [1, n]\) and let us suppose that \(\pointisation[k - 1]{c}\) is \((k - 1)\)-pointed. Then \(\pointiser[k]{c}\) is \((k - 1)\)-pointed by definition and hence \(\pointisation[k]{c} = \pointisation[k - 1]{c} - \pointiser[k]{c} \differential\) is \((k - 1)\)-pointed by remark~\ref{rem:k-pointedness_and_differentials}\ref{rem:k-pointedness_and_differentials:k-pointedness}. It remains to show that
\[((g_j)_{j \in \descendinginterval{n - 1}{k}} \union (1) \union (g_j)_{j \in \descendinginterval{k - 2}{0}}) \pointisation[k]{c} = 0\]
for \(g_j \in G_j\), \(j \in \descendinginterval{n - 1}{k} \union \descendinginterval{k - 2}{0}\).
Indeed, if \(k \in [1, n - 1]\), then we have, since \(\pointisation[k - 1]{c}\) and \(\pointiser[k]{c}\) are \((k - 1)\)-pointed,
\begin{align*}
& ((g_j)_{j \in \descendinginterval{n - 1}{k}} \union (1) \union (g_j)_{j \in \descendinginterval{k - 2}{0}}) \pointisation[k]{c} \\
& = ((g_j)_{j \in \descendinginterval{n - 1}{k}} \union (1) \union (g_j)_{j \in \descendinginterval{k - 2}{0}}) \pointisation[k - 1]{c} - ((g_j)_{j \in \descendinginterval{n - 1}{k}} \union (1) \union (g_j)_{j \in \descendinginterval{k - 2}{0}}) (\pointiser[k]{c} \differential) \\
& = ((g_j)_{j \in \descendinginterval{n - 1}{k}} \union (1) \union (g_j)_{j \in \descendinginterval{k - 2}{0}}) \pointisation[k - 1]{c} \\
& \qquad - \sum_{i \in [k + 1, n - 1]} (- 1)^i ((g_{j + 1})_{j \in \descendinginterval{n - 2}{i}} \union (g_i g_{i - 1}) \union (g_j)_{j \in \descendinginterval{i - 2}{k}} \union (1) \union (g_j)_{j \in \descendinginterval{k - 2}{0}}) \pointiser[k]{c} \\
& \qquad - (- 1)^n g_{n - 1} ((g_j)_{j \in \descendinginterval{n - 2}{k}} \union (1) \union (g_j)_{j \in \descendinginterval{k - 2}{0}}) \pointiser[k]{c} \\
& = ((g_j)_{j \in \descendinginterval{n - 1}{k}} \union (1) \union (g_j)_{j \in \descendinginterval{k - 2}{0}}) \pointisation[k - 1]{c} \\
& \qquad - \sum_{i \in [k + 1, n - 1]} (- 1)^{i + k} ((g_j)_{j \in \descendinginterval{n - 1}{i + 1}} \union (g_i g_{i - 1}) \union (g_{j - 1})_{j \in \descendinginterval{i - 1}{k + 1}} \union (1) \union (1) \union (g_j)_{j \in \descendinginterval{k - 2}{0}}) \pointisation[k - 1]{c} \\
& \qquad - (- 1)^{n + k - 1} g_{n - 1} ((g_{j - 1})_{j \in \descendinginterval{n - 1}{k + 1}} \union (1) \union (1) \union (g_j)_{j \in \descendinginterval{k - 2}{0}}) \pointisation[k - 1]{c} \\
& = ((g_j)_{j \in \descendinginterval{n - 1}{k}} \union (1) \union (g_j)_{j \in \descendinginterval{k - 2}{0}}) \pointisation[k - 1]{c} \\
& \qquad + \sum_{i \in [k + 2, n]} (- 1)^{i + k - 1} ((g_j)_{j \in \descendinginterval{n - 1}{i}} \union (g_{i - 1} g_{i - 2}) \union (g_{j - 1})_{j \in \descendinginterval{i - 2}{k + 1}} \union (1) \union (1) \union (g_j)_{j \in \descendinginterval{k - 2}{0}}) \pointisation[k - 1]{c} \\
& \qquad + (- 1)^{n + k} g_{n - 1} ((g_{j - 1})_{j \in \descendinginterval{n - 1}{k + 1}} \union (1) \union (1) \union (g_j)_{j \in \descendinginterval{k - 2}{0}}) \pointisation[k - 1]{c} \\
& = (- 1)^{k - 1} \big( (- 1)^{k + 1} ((g_j)_{j \in \descendinginterval{n - 1}{k}} \union (1) \union (g_j)_{j \in \descendinginterval{k - 2}{0}}) \pointisation[k - 1]{c} \\
& \qquad + \sum_{i \in [k + 2, n]} (- 1)^i ((g_j)_{j \in \descendinginterval{n - 1}{i}} \union (g_{i - 1} g_{i - 2}) \union (g_{j - 1})_{j \in \descendinginterval{i - 2}{k + 1}} \union (1) \union (1) \union (g_j)_{j \in \descendinginterval{k - 2}{0}}) \pointisation[k - 1]{c} \\
& \qquad + (- 1)^{n + 1} g_{n - 1} ((g_{j - 1})_{j \in \descendinginterval{n - 1}{k + 1}} \union (1) \union (1) \union (g_j)_{j \in \descendinginterval{k - 2}{0}}) \pointisation[k - 1]{c} \big) \\
& = (- 1)^{k - 1} ((g_{j - 1})_{j \in \descendinginterval{n}{k + 1}} \union (1) \union (1) \union (g_j)_{j \in \descendinginterval{k - 2}{0}}) (\pointisation[k - 1]{c} \differential) \\
& = (- 1)^{k - 1} ((g_{j - 1})_{j \in \descendinginterval{n}{k + 1}} \union (1) \union (1) \union (g_j)_{j \in \descendinginterval{k - 2}{0}}) (c \differential) = 0.
\end{align*}
Moreover, for \(k = n\), we have \(n \geq 1\) and obtain, since \(\pointisation[n - 1]{c}\) and \(\pointiser[n]{c}\) are \((n - 1)\)-pointed,
\begin{align*}
((1) \union (g_j)_{j \in \descendinginterval{n - 2}{0}}) \pointisation[n]{c} & = ((1) \union (g_j)_{j \in \descendinginterval{n - 2}{0}}) \pointisation[n - 1]{c} - ((1) \union (g_j)_{j \in \descendinginterval{n - 2}{0}}) (\pointiser[n]{c} \differential) \\
& = ((1) \union (g_j)_{j \in \descendinginterval{n - 2}{0}}) \pointisation[n - 1]{c} = (- 1)^{n - 1} ((1) \union (1) \union (g_j)_{j \in \descendinginterval{n - 2}{0}}) (\pointisation[n - 1]{c} \differential) \\
& = (- 1)^{n - 1} ((1) \union (1) \union (g_j)_{j \in \descendinginterval{n - 2}{0}}) (c \differential) = 0. \qedhere
\end{align*}
\end{proof}

\pagebreak % manual format

\begin{corollary}[{cf.~\cite[lem.~6.1, lem.~6.2]{eilenberg_maclane:1947:cohomology_theory_in_abstract_groups_I}}] \label{cor:descriptions_of_componentwise_pointed_cocycle_coboundary_and_cohomology_group} \
\begin{enumerate}
\item \label{cor:descriptions_of_componentwise_pointed_cocycle_coboundary_and_cohomology_group:cocycles} We have
\[\CocycleGroup[n]_{\text{cpt}}(G, M) = \{z \in \CocycleGroup[n](G, M) \mid \text{\(\pointisation[k]{z} = \pointisation[k - 1]{z}\) for \(k \in [1, n]\)}\} = \{z \in \CocycleGroup[n](G, M) \mid \pointisation[n]{z} = z\}.\]
\item \label{cor:descriptions_of_componentwise_pointed_cocycle_coboundary_and_cohomology_group:coboundaries} If \(n \in \Naturals\), then we have
\[\CoboundaryGroup[n]_{\text{cpt}}(G, M) = (\CochainComplex[n - 1]_{\text{cpt}}(G, M)) \differential.\]
\item \label{cor:descriptions_of_componentwise_pointed_cocycle_coboundary_and_cohomology_group:cohomology} The embedding \(\CocycleGroup[n]_{\text{cpt}}(G, M) \map \CocycleGroup[n](G, M)\) and the \(n\)-pointisation homomorphism \(\CocycleGroup[n](G, M) \map \CocycleGroup[n]_{\text{cpt}}(G, M)\), % manual format
\(z \mapsto \pointisation[n]{z}\) induce mutually inverse isomorphisms between \(\CohomologyGroup[n]_{\text{cpt}}(G, M)\) and \(\CohomologyGroup[n](G, M)\). In particular,
\[\CohomologyGroup[n](G, M) \isomorphic \CohomologyGroup[n]_{\text{cpt}}(G, M).\]
\end{enumerate}
\end{corollary}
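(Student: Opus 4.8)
The plan is to reduce all three assertions to two structural features of the pointisation operation, both of which are already in hand. From the recursion \(\pointisation[k]{c} = \pointisation[k - 1]{c} - \pointiser[k]{c} \differential\) and \(\differential \comp \differential = 0\) I would first extract the two identities \(\pointisation[n]{c} \differential = c \differential\) (apply \(\differential\), kill the \(\pointiser[k]{c} \differential \differential\) terms, and induct on \(k\)) and \(\pointisation[n]{c} - c \in \CoboundaryGroup[n](G, M)\) (telescope the recursion, so that \(\pointisation[n]{c} - c = - (\sum_{k} \pointiser[k]{c}) \differential\)). Together these say that pointisation changes a cochain only by a coboundary, hence preserves cocycle status and cohomology class. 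The second feature is proposition~\ref{prop:k-pointisation_of_cochain_with_componentwise_pointed_coboundary_is_k-pointed}, which supplies the required pointedness whenever the coboundary is componentwise pointed. With these, the corollary becomes formal.

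For part~(a) I would prove the cyclic chain \(\CocycleGroup[n]_{\text{cpt}}(G, M) \subseteq \{z \in \CocycleGroup[n](G, M) \mid \pointisation[k]{z} = \pointisation[k - 1]{z} \text{ for } k \in [1, n]\} \subseteq \{z \in \CocycleGroup[n](G, M) \mid \pointisation[n]{z} = z\} \subseteq \CocycleGroup[n]_{\text{cpt}}(G, M)\), which forces equality throughout. For the first inclusion, given a componentwise pointed cocycle \(z\), I would show \(\pointisation[k]{z} = z\) by induction on \(k\): under \(\pointisation[k - 1]{z} = z\), the defining formula evaluates \(\pointisation[k - 1]{z} = z\) on a tuple carrying a \(1\) in position \(k - 1\), which vanishes since \(z\) is componentwise pointed, so \(\pointiser[k]{z} = 0\) and \(\pointisation[k]{z} = \pointisation[k - 1]{z}\). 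The middle inclusion is the telescoping \(\pointisation[n]{z} = \dots = \pointisation[0]{z} = z\). For the last inclusion, a cocycle \(z\) has \(z \differential = 0\) componentwise pointed, so proposition~\ref{prop:k-pointisation_of_cochain_with_componentwise_pointed_coboundary_is_k-pointed} makes \(\pointisation[n]{z}\) \(n\)-pointed, i.e.\ componentwise pointed; thus \(\pointisation[n]{z} = z\) forces \(z \in \CocycleGroup[n]_{\text{cpt}}(G, M)\).

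For part~(b) the inclusion \((\CochainComplex[n - 1]_{\text{cpt}}(G, M)) \differential \subseteq \CoboundaryGroup[n]_{\text{cpt}}(G, M)\) is immediate from remark~\ref{rem:k-pointedness_and_differentials}\ref{rem:k-pointedness_and_differentials:componentwise_pointedness}: the differential of a componentwise pointed cochain is a coboundary and is again componentwise pointed. For the reverse inclusion I would take a componentwise pointed coboundary \(b = c \differential\) and replace \(c\) by \(c' := \pointisation[n - 1]{c}\) (running the recursion with \(n - 1\) in place of \(n\)): since \(b = c \differential\) is componentwise pointed, proposition~\ref{prop:k-pointisation_of_cochain_with_componentwise_pointed_coboundary_is_k-pointed} shows \(c'\) is \((n - 1)\)-pointed, hence componentwise pointed, while \(c' \differential = c \differential = b\) by the first identity above. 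Thus \(b = c' \differential \in (\CochainComplex[n - 1]_{\text{cpt}}(G, M)) \differential\).

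For part~(c) I would first verify that both maps descend to cohomology. The embedding carries componentwise pointed coboundaries into coboundaries, inducing \(\CohomologyGroup[n]_{\text{cpt}}(G, M) \map \CohomologyGroup[n](G, M)\); the \(n\)-pointisation is \(\Integers\)-linear (each recursion step is), sends a cocycle to a componentwise pointed cocycle (by the part~(a) argument together with \(\pointisation[n]{z} \differential = z \differential = 0\)), and sends a coboundary \(b\) to \(\pointisation[n]{b}\), which is a componentwise pointed coboundary since \(\pointisation[n]{b}\) is componentwise pointed and \(\pointisation[n]{b} = b - (\sum_k \pointiser[k]{b}) \differential\) is visibly a coboundary; this gives \(\CohomologyGroup[n](G, M) \map \CohomologyGroup[n]_{\text{cpt}}(G, M)\). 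One composite is the identity already at cocycle level, since part~(a) yields \(\pointisation[n]{z} = z\) for componentwise pointed \(z\); the other is the identity on cohomology because \(\pointisation[n]{z} - z \in \CoboundaryGroup[n](G, M)\). I expect the only delicate point to be the bookkeeping of well-definedness of \(z \mapsto \pointisation[n]{z}\) on quotients; the genuine content lives in the already-proven proposition, and the corollary itself is then purely formal.
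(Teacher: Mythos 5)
Your proposal is correct and follows essentially the same route as the paper: part~(a) via the same cyclic chain of inclusions using proposition~\ref{prop:k-pointisation_of_cochain_with_componentwise_pointed_coboundary_is_k-pointed}, part~(b) by replacing \(c\) with \(\pointisation[n-1]{c}\), and part~(c) resting on the decomposition \(z = \pointisation[n]{z} + (\sum_k \pointiser[k]{z})\differential\). The only cosmetic difference is in~(c), where the paper packages the mutual inverseness via Noether's first isomorphism theorem while you verify the two composites directly; the underlying content is identical.
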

\begin{proof} \
\begin{enumerate}
\item We suppose given an \(n\)-cocycle \(z \in \CocycleGroup[n](G, M)\). If \(z\) is componentwise pointed, we inductively have \(\pointiser[k]{z} = 0\) and hence \(\pointisation[k]{z} = \pointisation[k - 1]{z}\) for \(k \in [1, n]\). If \(\pointisation[k]{z} = \pointisation[k - 1]{z}\) for \(k \in [1, n]\), it follows inductively that \(\pointisation[n]{z} = \pointisation[0]{z} = z\). Finally, if \(\pointisation[n]{z} = z\), it follows from the componentwise pointedness of \(\pointisation[n]{z} \differential = z \differential = 0\) that \(z = \pointisation[n]{z}\) is \(n\)-pointed by proposition~\ref{prop:k-pointisation_of_cochain_with_componentwise_pointed_coboundary_is_k-pointed}, that is, \(z\) is componentwise pointed.
\item By remark~\ref{rem:k-pointedness_and_differentials}\ref{rem:k-pointedness_and_differentials:componentwise_pointedness}, we have \((\CochainComplex[n - 1]_{\text{cpt}}(G, M)) \differential \subseteq \CoboundaryGroup[n]_{\text{cpt}}(G, M)\). Conversely, we suppose given an \(n\){\nbd}coboundary \(b \in \CoboundaryGroup[n](G, M)\) and we choose an \((n - 1)\)-cochain \(c \in \CochainComplex[n - 1](G, M)\) with \(b = c \differential\). Then we also have \(b = \pointisation[n - 1]{c} \differential\), and by proposition~\ref{prop:k-pointisation_of_cochain_with_componentwise_pointed_coboundary_is_k-pointed} it follows that if \(b\) is componentwise pointed, then so is \(\pointisation[n - 1]{c}\). Thus we also have \(\CoboundaryGroup[n]_{\text{cpt}}(G, M) = (\CochainComplex[n - 1]_{\text{cpt}}(G, M)) \differential\).

\item By definition of the \(n\)-pointisation, we have \(z = \pointisation[n]{z} + (\sum_{k \in [1, n]} \pointiser[k]{z}) \differential\) for every \(n\)-cocycle \(z \in \CocycleGroup[n](G, M)\) and since the \(n\)-pointisation \(\pointisation[n]{z}\) is componentwise pointed by~\ref{cor:descriptions_of_componentwise_pointed_cocycle_coboundary_and_cohomology_group:cocycles}, it follows that
\[\CohomologyGroup[n](G, M) = \CocycleGroup[n](G, M) / \CoboundaryGroup[n](G, M) = (\CocycleGroup[n]_{\text{cpt}}(G, M) + \CoboundaryGroup[n](G, M)) / \CoboundaryGroup[n](G, M).\]
Moreover,
\[\CohomologyGroup[n]_{\text{cpt}}(G, M) = \CocycleGroup[n]_{\text{cpt}}(G, M) / \CoboundaryGroup[n]_{\text{cpt}}(G, M) = \CocycleGroup[n]_{\text{cpt}}(G, M) / (\CocycleGroup[n]_{\text{cpt}}(G, M) \intersection \CoboundaryGroup[n](G, M)),\]
and thus Noether's first law of isomorphism provides the asserted isomorphisms
\begin{align*}
& \CohomologyGroup[n]_{\text{cpt}}(G, M) \map \CohomologyGroup[n](G, M), z + \CoboundaryGroup[n]_{\text{cpt}}(G, M) \mapsto z + \CoboundaryGroup[n](G, M) \text{ and} \\
& \CohomologyGroup[n](G, M) \map \CohomologyGroup[n]_{\text{cpt}}(G, M), z + \CoboundaryGroup[n](G, M) \mapsto \pointisation[n]{z} + \CoboundaryGroup[n]_{\text{cpt}}(G, M). \qedhere
\end{align*}
\end{enumerate}
\end{proof}

\section{Crossed module extensions and their equivalence classes} \label{sec:crossed_module_extensions_and_their_equivalence_classes}

In this section, we suppose given a group \(G\) and an abelian \(G\)-module \(M\).

\begin{definition}[crossed module extension] \label{def:crossed_module_extension} \
\begin{enumerate}
\item \label{def:crossed_module_extension:extensions} A \newnotion{crossed module extension} (or \newnotion{\(2\)-extension}) of \(G\) with \(M\) consists of a crossed module \(E\) together with a group monomorphism \(\iota\colon M \map \ModulePart E\) and a group epimorphism \(\pi\colon \GroupPart E \map G\) such that
\[M \morphism[\iota] \ModulePart E \morphism[\structuremorphism] \GroupPart E \morphism[\pi] G\]
is an exact sequence of groups and such that the induced action of \(G\) on \(M\) caused by the action of the crossed module \(E\) coincides with the a priori given action of \(G\) on \(M\), that is, such that \({^{e}}(m \iota) = ((e \pi) m) \iota\) for \(e \in \GroupPart E\) and \(m \in M\).

By abuse of notation, we often refer to the crossed module extension as well as to its underlying crossed module by \(E\). The morphism \(\iota\) is said to be the \newnotion{canonical monomorphism} and the morphism \(\pi\) is said to be the \newnotion{canonical epimorphism} of the crossed module extension \(E\).

Given a crossed module extension \(E\) of \(G\) with \(M\) with canonical monomorphism \(\iota\) and canonical epimorphism \(\pi\), we write \(\canonicalmonomorphism = \canonicalmonomorphism[E] := \iota\) and \(\canonicalepimorphism = \canonicalepimorphism[E] := \pi\).
\item \label{def:crossed_module_extension:set_of_extensions_in_a_grothendieck_universe} We suppose given a Grothendieck universe \(\mathfrak{U}\) such that \(G\) and \(M\) are in \(\mathfrak{U}\). A crossed module extension is said to be \newnotion{in \(\mathfrak{U}\)} (or a \newnotion{\(\mathfrak{U}\)-crossed module extension}) if its underlying crossed module is in \(\mathfrak{U}\). The set of crossed module extensions in \(\mathfrak{U}\) of \(G\) with \(M\) will be denoted by \(\Extensions{2}(G, M) = \Extensions[\mathfrak{U}]{2}(G, M)\).
\end{enumerate}
\end{definition}

\begin{remark} \label{rem:crossed_module_extensions_and_homotopy_groups_of_crossed_modules} \
\begin{enumerate}
\item \label{rem:crossed_module_extensions_and_homotopy_groups_of_crossed_modules:homotopy_groups_of_a_crossed_module_extension} We have \(\HomotopyGroup[0](E) \isomorphic G\) and \(\HomotopyGroup[1](E) \isomorphic M\) for every crossed module extension \(E\) of \(G\) with \(M\).
\item \label{rem:crossed_module_extensions_and_homotopy_groups_of_crossed_modules:canonical_extension_of_a_crossed_module} Conversely, given an arbitrary crossed module \(V\), we get a crossed module extension of \(\HomotopyGroup[0](V)\) with \(\HomotopyGroup[1](V)\), where \(\canonicalmonomorphism = \inc^{\HomotopyGroup[1](V)}\) and \(\canonicalepimorphism = \quo^{\HomotopyGroup[0](V)}\).
\[\HomotopyGroup[1](V) \morphism[\inc] \ModulePart V \morphism[\structuremorphism] \GroupPart V \morphism[\quo] \HomotopyGroup[0](V)\]
\end{enumerate}
\end{remark}

\begin{example} \label{ex:crossed_module_extensions} \
\begin{enumerate}
\item \label{ex:crossed_module_extensions:trivial_crossed_module_extension} The trivial homomorphism crossed module \(\TrivialHomomorphismCrossedModule{M}{G}\) provides a crossed module extension together with \(\id_M\) as canonical monomorphism and \(\id_G\) as canonical epimorphism, the \newnotion{trivial crossed module extension} of \(G\) with \(M\).
\[M \morphism[\id_M] M \morphism[\triv] G \morphism[\id_G] G\]
\item \label{ex:crossed_module_extensions:trivial_module} We suppose given a group \(E_0\) and a group epimorphism \(\pi\colon E_0 \map G\). Then the normal subgroup crossed module \(\NormalSubgroupCrossedModule{\Kernel \pi}{E_0}\) yields a crossed module extension of \(G\) with \(0\), where the canonical monomorphism is trivial and the canonical epimorphism is \(\pi\).
\[0 \morphism \Kernel \pi \morphism[\inc] E_0 \morphism[\pi] G\]
\end{enumerate}
\end{example}

\begin{definition}[equivalence of crossed module extensions] \label{def:equivalence_of_crossed_module_extensions} \
\begin{enumerate}
\item \label{def:equivalence_of_crossed_module_extensions:equivalence_morphisms} We let \(E\) and \(\tilde E\) be crossed module extensions of \(G\) with \(M\). An (\newnotion{extension}) \newnotion{equivalence} from \(E\) to \(\tilde E\) is a morphism of crossed modules \(\varphi\colon E \map \tilde E\) such that \(\canonicalmonomorphism[\tilde E] = \canonicalmonomorphism[E] (\ModulePart \varphi)\) and \(\canonicalepimorphism[E] = (\GroupPart \varphi) \canonicalepimorphism[\tilde E]\).
\[\begin{tikzpicture}[baseline=(m-2-1.base)]
  \matrix (m) [diagram]{
    M & \ModulePart E & \GroupPart E & G \\
    M & \ModulePart \tilde E & \GroupPart \tilde E & G \\};
  \path[->, font=\scriptsize]
    (m-1-1) edge node[above] {\(\canonicalmonomorphism[E]\)} (m-1-2)
            edge[equality] (m-2-1)
    (m-1-2) edge node[above] {\(\structuremorphism[E]\)} (m-1-3)
            edge node[right] {\(\ModulePart \varphi\)} (m-2-2)
    (m-1-3) edge node[above] {\(\canonicalepimorphism[E]\)} (m-1-4)
            edge node[right] {\(\GroupPart \varphi\)} (m-2-3)
    (m-1-4) edge[equality] (m-2-4)
    (m-2-1) edge node[above] {\(\canonicalmonomorphism[\tilde E]\)} (m-2-2)
    (m-2-2) edge node[above] {\(\structuremorphism[\tilde E]\)} (m-2-3)
    (m-2-3) edge node[above] {\(\canonicalepimorphism[\tilde E]\)} (m-2-4);
\end{tikzpicture}\]
\item \label{def:equivalence_of_crossed_module_extensions:equivalence_and_equivalence_classes} We suppose given a Grothendieck universe \(\mathfrak{U}\) such that \(G\) and \(M\) are in \(\mathfrak{U}\). We let \({\extensionequivalent} = {\extensionequivalent[\mathfrak{U}]}\) be the equivalence relation on \(\Extensions[\mathfrak{U}]{2}(G, M)\) generated by the following relation: Given extensions \(E, \tilde E \in \Extensions[\mathfrak{U}]{2}(G, M)\), the extension \(E\) is in relation to the extension \(\tilde E\) if there exists an extension equivalence \(E \map \tilde E\). Given crossed module extensions \(E\) and \(\tilde E\) with \(E \extensionequivalent \tilde E\), we say that \(E\) and \(\tilde E\) are (\newnotion{extension}) \newnotion{equivalent}. The set of equivalence classes of crossed module extensions in \(\mathfrak{U}\) of \(G\) with \(M\) with respect to \({\extensionequivalent[\mathfrak{U}]}\) is denoted by \(\ExtensionClasses{2}(G, M) = \ExtensionClasses[\mathfrak{U}]{2}(G, M) := \Extensions[\mathfrak{U}]{2}(G, M) / {\extensionequivalent[\mathfrak{U}]}\), and an element of \(\ExtensionClasses{2}(G, M)\) is said to be a \newnotion{crossed module extension class} of \(G\) with \(M\) \newnotion{in \(\mathfrak{U}\)} (or a \newnotion{\(\mathfrak{U}\)-crossed module extension class}).
\end{enumerate}
\end{definition}

\begin{remark} \label{rem:extension_equivalences_and_weak_equivalences} \
\begin{enumerate}
\item We suppose given a group \(G\) and an abelian \(G\)-module \(M\). Every extension equivalence \(\varphi\colon E \map \tilde E\) between crossed module extensions \(E\) and \(\tilde E\) of \(G\) with \(M\) is a weak homotopy equivalence between the underlying crossed modules of \(E\) and \(\tilde E\).
\item Given a weak homotopy equivalence \(\varphi\colon V \map W\) between crossed modules \(V\) and \(W\), there exist structures of crossed module extensions on \(V\) and \(W\) such that \(\varphi\) is an extension equivalence.
\end{enumerate}
\end{remark}

\begin{example} \label{ex:crossed_module_extension_equivalence_normal_subgroup_crossed_module}
We suppose given a group \(E_0\) and a group epimorphism \(\pi\colon E_0 \map G\). Then
\[\NormalSubgroupCrossedModule{\Kernel \pi}{E_0} \extensionequivalent \TrivialHomomorphismCrossedModule{0}{G}\]
since \(\pi\) induces an extension equivalence.
\[\begin{tikzpicture}[baseline=(m-2-1.base)]
  \matrix (m) [diagram]{
    0 & \Kernel \pi & E_0 & G \\
    0 & 0 & G & G \\};
  \path[->, font=\scriptsize]
    (m-1-1) edge (m-1-2)
            edge[equality] (m-2-1)
    (m-1-2) edge node[above] {\(\inc\)} (m-1-3)
            edge (m-2-2)
    (m-1-3) edge node[above] {\(\pi\)} (m-1-4)
            edge node[right] {\(\pi\)} (m-2-3)
    (m-1-4) edge[equality] (m-2-4)
    (m-2-1) edge[equality] (m-2-2)
    (m-2-2) edge (m-2-3)
    (m-2-3) edge[equality] (m-2-4);
\end{tikzpicture}\]
\end{example}

\section{The associated cohomology class} \label{sec:the_associated_cohomology_class}

During this section, we suppose given a group \(G\) and an abelian \(G\)-module \(M\).

The aim of this manuscript is to show that there is a bijection between the set of crossed module extension classes \(\ExtensionClasses{2}(G, M)\) and the third cohomology group \(\CohomologyGroup[3](G, M)\), see theorem~\ref{th:bijection_between_crossed_module_extension_classes_and_third_componentwise_pointed cohomology_group}. Since \(\CohomologyGroup[3](G, M) \isomorphic \CohomologyGroup[3]_{\text{cpt}}(G, M)\) by corollary~\ref{cor:descriptions_of_componentwise_pointed_cocycle_coboundary_and_cohomology_group}\ref{cor:descriptions_of_componentwise_pointed_cocycle_coboundary_and_cohomology_group:cohomology}, we are able to work with componentwise pointed cocycles and coboundaries. Most steps of the construction can be done with ``unpointed'' data, but it seems to the author that componentwise pointedness cannot be avoided in the proofs of proposition~\ref{prop:every_3-cocycle_of_the_cohomology_class_of_an_crossed_module_extension_is_constructable} and proposition~\ref{prop:every_cocycle_of_the_standard_extension_comes_from_a_section_system}. So for convenience, we will work with pointed sets and componentwise pointed maps throughout the whole procedure.

We start by constructing for a given crossed module extension class of \(G\) with \(M\) a cohomology class in \(\CohomologyGroup[3]_{\text{cpt}}(G, M)\). The arguments used here are adapted from~\cite[sec.~7]{eilenberg_maclane:1947:cohomology_theory_in_abstract_groups_II_group_extensions_with_a_non-abelian_kernel}.

\begin{remark} \label{rem:sections_of_group_epimorphisms_lead_to_non-abelian_2-cocycles}
We suppose given a group \(E_0\) and an epimorphism \(\pi\colon E_0 \map G\). For every section \(s^0\) of the underlying pointed map of \(\pi\), the map
\[\cocycleofextension{2} = \cocycleofextension{2}_{E_0, s^0}\colon G \cart G \map \Kernel \pi, (h, g) \mapsto (h s^0) (g s^0) ((h g) s^0)^{- 1}\]
is well-defined, componentwise pointed and fulfills
\[(k, h) \cocycleofextension{2} (k h, g) \cocycleofextension{2} = {^{k s^0}}((h, g) \cocycleofextension{2}) (k, h g) \cocycleofextension{2}\]
for \(g, h, k \in G\).
\end{remark}
\begin{proof}
We suppose given a section \(s^0\colon G \map E_0\) of the underlying pointed map of \(\pi\). Since \(\pi\) is a group homomorphism, we have \((h s^0) (g s^0) ((h g) s^0)^{- 1} \in \Kernel \pi\) for \(g, h \in G\). That is, we obtain a well-defined map \(\cocycleofextension{2}\colon G \cart G \map \Kernel \pi\) given by \((h, g) \cocycleofextension{2} := (h s^0) (g s^0) ((h g) s^0)^{- 1}\), that is, such that
\[(h s^0) (g s^0) = (h, g) \cocycleofextension{2} (h g) s^0\]
for \(g, h \in G\). Since \(s^0\) is pointed, we have
\begin{align*}
& (g, 1) \cocycleofextension{2} = (g s^0) (1 s^0) ((g 1) s^0)^{- 1} = 1 \text{ and} \\
& (1, g) \cocycleofextension{2} = (1 s^0) (g s^0) ((1 g) s^0)^{- 1} = 1
\end{align*}
for all \(g \in G\), that is, \(\cocycleofextension{2}\) is componentwise pointed. By computing the product \((k s^0) (h s^0) (g s^0)\) in \(E_0\) for \(g, h, k \in G\) in two different ways, we get on the one hand
\[((k s^0) (h s^0)) (g s^0) = (k, h) \cocycleofextension{2} (k h) s^0 (g s^0) = (k, h) \cocycleofextension{2} (k h, g) \cocycleofextension{2} (k h g) s^0,\]
and on the other hand
\[(k s^0) ((h s^0) (g s^0)) = (k s^0) (h, g) \cocycleofextension{2} (h g) s^0 = {^{k s^0}}((h, g)\cocycleofextension{2}) (k s^0) (h g) s^0 = {^{k s^0}}((h, g) \cocycleofextension{2}) (k, h g) \cocycleofextension{2} (k h g) s^0.\]
Hence \(\cocycleofextension{2}\) fulfills
\[((k, h) \cocycleofextension{2}) ((k h, g) \cocycleofextension{2}) = {^{k s^0}}((h, g) \cocycleofextension{2}) ((k, h g) \cocycleofextension{2})\]
for \(g, h, k \in G\).
\end{proof}

\begin{definition}[non-abelian \(2\)-cocycle of a crossed module extension] \label{def:non-abelian_2-cocycle_of_a_crossed_module_extension} \
\begin{enumerate}
\item We suppose given a group \(E_0\) and an epimorphism \(\pi\colon E_0 \map G\). Given a section \(s^0\) of the underlying pointed map of \(\pi\), we call
\[\cocycleofextension{2} = \cocycleofextension{2}_{E_0, s^0} \colon G \cart G \map \Kernel \pi, (h, g) \mapsto (h s^0) (g s^0) ((h g) s^0)^{- 1}\]
the \newnotion{non-abelian \(2\)-cocycle} of \(E_0\) with respect to \(s^0\). (\footnote{Note that \(\Kernel \pi\) is non-abelian in general. However, if \(\Kernel \pi\) is abelian, then \(\cocycleofextension{2}\) is the well-known \(2\)-cocycle in \(\CocycleGroup[2](G, \Kernel \pi)\) of the group extension \(E_0\) of \(G\) with \(\Kernel \pi\).})
\item Given a crossed module extension \(E\) of \(G\) with \(M\) and a section \(s^0\) of the underlying pointed map of \(\canonicalepimorphism\), the non-abelian \(2\)-cocycle \(\cocycleofextension{2}_{\GroupPart E, s^0}\) of \(\GroupPart E\) with respect to \(s^0\) is also said to be the \newnotion{non-abelian \(2\)-cocycle} of \(E\) with respect to \(s^0\) and is also denoted by \(\cocycleofextension{2} = \cocycleofextension{2}_{E, s^0} := \cocycleofextension{2}_{\GroupPart E, s^0}\).
\end{enumerate}
\end{definition}

\begin{definition}[lifting and section systems for crossed module extensions] \label{def:lifting_and_section_systems_for_crossed_module_extensions}
We suppose given a crossed module extension \(E\) of \(G\) with \(M\).
\begin{enumerate}
\item \label{def:lifting_and_section_systems_for_crossed_module_extensions:lifting_system} A \newnotion{lifting system} for \(E\) is a pair \((Z^2, Z^1)\) consisting of a lift \(Z^1\colon G \map \GroupPart E\) of \(\id_G\) along the underlying pointed map of \(\canonicalepimorphism\) and a lift \(Z^2\colon G \cart G \map \ModulePart E\) of \(\cocycleofextension{2}_{E, Z^1}\) along the underlying pointed map of \(\structuremorphism|^{\Image \structuremorphism}\) such that \(Z^2\) is componentwise pointed.
\item \label{def:lifting_and_section_systems_for_crossed_module_extensions:section_system} A \newnotion{section system} for \(E\) is a pair \((s^1, s^0)\) consisting of a section \(s^0\colon G \map \GroupPart E\) of the underlying pointed map of \(\canonicalepimorphism\) and a section \(s^1\colon \Image \structuremorphism \map \ModulePart E\) of the underlying pointed map of \(\structuremorphism|^{\Image \structuremorphism}\).
\end{enumerate}
\end{definition}

\begin{example} \label{ex:section_system_of_the_trivial_crossed_module_extension}
The unique section system for the trivial crossed module extension \(\TrivialHomomorphismCrossedModule{M}{G}\) of \(G\) with \(M\) is given by \((\triv, \id_G)\).
\end{example}

\begin{remark} \label{rem:section_systems_provide_lifting_systems}
We suppose given a crossed module extension \(E\) of \(G\) with \(M\). Every section system \((s^1, s^0)\) for \(E\) provides a lifting system \((Z^2, Z^1)\) for \(E\), where \(Z^1 := s^0\) and \(Z^2 := \cocycleofextension{2} s^1\).
\end{remark}
\begin{proof}
We suppose given a section system \((s^1, s^0)\) for \(E\). Then \(Z^1 := s^0\) is a section of \(\canonicalepimorphism\) and hence a lift of \(\id_G\) along the underlying pointed map of \(\canonicalepimorphism\). Further, \(Z^2 := \cocycleofextension{2} s^1\) is a lift of \(\cocycleofextension{2}\) along the underlying pointed map of \(\structuremorphism|^{\Image \structuremorphism}\). It is componentwise pointed since \(\cocycleofextension{2}\) is componentwise pointed by remark~\ref{rem:sections_of_group_epimorphisms_lead_to_non-abelian_2-cocycles} and \(s^1\) is pointed by assumption.
\end{proof}

\begin{definition}[lifting systems coming from section systems] \label{def:lifting_systems_coming_from_section_systems}
Given a crossed module extension \(E\) of \(G\) with \(M\) and a section system \((s^1, s^0)\) for \(E\), we say that a lifting system \((Z^2, Z^1)\) for \(E\) \newnotion{comes from} \((s^1, s^0)\) if \(Z^1 = s^0\) and \(Z^2 = \cocycleofextension{2} s^1\).
\end{definition}

\begin{remark} \label{rem:lifting_systems_of_crossed_module_extensions_lead_to_3-cocycles}
We suppose given a crossed module extension \(E\) of \(G\) with \(M\). For every lifting system \((Z^2, Z^1)\) for \(E\), the map
\begin{align*}
& \cocycleofextension{3} = \cocycleofextension{3}_{E, (Z^2, Z^1)}\colon G \cart G \cart G \map M, \\
& \qquad (k, h, g) \mapsto \big( (k, h) Z^2 (k h, g) Z^2 ((k, h g) Z^2)^{- 1} ({^{k Z^1}}((h, g) Z^2))^{- 1} \big) (\canonicalmonomorphism|^{\Image \canonicalmonomorphism})^{- 1}
\end{align*}
is a well-defined componentwise pointed \(3\)-cocycle of \(G\) with values in \(M\).
\end{remark}
\begin{proof}
We suppose given a lifting system \((Z^2, Z^1)\) for \(E\). By remark~\ref{rem:sections_of_group_epimorphisms_lead_to_non-abelian_2-cocycles}, we have
\[(k, h) \cocycleofextension{2} (k h, g) \cocycleofextension{2} = {^{k Z^1}}((h, g) \cocycleofextension{2}) (k, h g) \cocycleofextension{2}\]
for \(g, h, k \in G\). Hence it follows that \((k, h) Z^2 (k h, g) Z^2 ((k, h g) Z^2)^{- 1} ({^{k Z^1}}((h, g) Z^2))^{- 1} \in \Kernel \structuremorphism = \Image \canonicalmonomorphism\) for \(g, h, k \in G\). Since \(\canonicalmonomorphism\) is injective, we obtain a well-defined map \(\cocycleofextension{3}\colon G \cart G \cart G \map M\) given by \((k, h, g) \cocycleofextension{3} := \big( (k, h) Z^2 (k h, g) Z^2 ((k, h g) Z^2)^{- 1} ({^{k Z^1}}((h, g) Z^2))^{- 1} \big) (\canonicalmonomorphism|^{\Image \canonicalmonomorphism})^{- 1}\), that is, such that
\[(k, h) Z^2 (k h, g) Z^2 = (k, h, g) \cocycleofextension{3} \canonicalmonomorphism \, {^{k Z^1}}\!((h, g) Z^2) (k, h g) Z^2.\]
Since \(Z^1\) and \(Z^2\) are componentwise pointed, we have
\begin{align*}
& (h, g, 1) \cocycleofextension{3} = \big( (h, g) Z^2 (h g, 1) Z^2 ((h, g) Z^2)^{- 1} ({^{h Z^1}}((g, 1) Z^2))^{- 1} \big) (\canonicalmonomorphism|^{\Image \canonicalmonomorphism})^{- 1} = 0, \\
& (h, 1, g) \cocycleofextension{3} = \big( (h, 1) Z^2 (h, g) Z^2 ((h, g) Z^2)^{- 1} ({^{h Z^1}}((1, g) Z^2))^{- 1} \big) (\canonicalmonomorphism|^{\Image \canonicalmonomorphism})^{- 1} = 0, \\
& (1, h, g) \cocycleofextension{3} = \big( (1, h) Z^2 (h, g) Z^2 ((1, h g) Z^2)^{- 1} ({^{1 Z^1}}((h, g) Z^2))^{- 1} \big) (\canonicalmonomorphism|^{\Image \canonicalmonomorphism})^{- 1} = 0
\end{align*}
for all \(g, h \in G\), that is, \(\cocycleofextension{3}\) is also componentwise pointed. To show that \(\cocycleofextension{3} \in \CocycleGroup[3]_{\text{cpt}}(G, M)\), we compute \((l, k) Z^2 (l k, h) Z^2 (l k h, g) Z^2\) for \(g, h, k, l \in G\) in two different ways. On the one hand, we have
\begin{align*}
& (l, k) Z^2 (l k, h) Z^2 (l k h, g) Z^2 = (l, k) Z^2 (l k, h, g) \cocycleofextension{3} \canonicalmonomorphism \, {^{(l k) Z^1}}\!((h, g) Z^2) (l k, h g) Z^2 \\
& = (l k, h, g) \cocycleofextension{3} \canonicalmonomorphism (l, k) Z^2 \, {^{(l k) Z^1}}\!((h, g) Z^2) (l k, h g) Z^2 \\
& = (l k, h, g) \cocycleofextension{3} \canonicalmonomorphism \, {^{(l, k) Z^2 (l k) Z^1}}((h, g) Z^2) (l, k) Z^2 (l k, h g) Z^2 \\
& = (l k, h, g) \cocycleofextension{3} \canonicalmonomorphism \, {^{(l Z^1) (k Z^1)}}((h, g) Z^2) (l, k, h g) \cocycleofextension{3} \canonicalmonomorphism \, {^{l Z^1}}\!((k, h g) Z^2) (l, k h g) Z^2 \\
& = (l k, h, g) \cocycleofextension{3} \canonicalmonomorphism (l, k, h g) \cocycleofextension{3} \canonicalmonomorphism \, {^{(l Z^1) (k Z^1)}}((h, g) Z^2) \, {^{l Z^1}}\!((k, h g) Z^2) (l, k h g) Z^2 \\
& = ((l k, h, g) \cocycleofextension{3} + (l, k, h g) \cocycleofextension{3}) \canonicalmonomorphism \, {^{(l Z^1) (k Z^1)}}((h, g) Z^2) \, {^{l Z^1}}\!((k, h g) Z^2) (l, k h g) Z^2,
\end{align*}
and on the other hand, we get
\begin{align*}
& (l, k) Z^2 (l k, h) Z^2 (l k h, g) Z^2 = (l, k, h) \cocycleofextension{3} \canonicalmonomorphism \, {^{l Z^1}}\!((k, h) Z^2) (l, k h) Z^2 (l k h, g) Z^2 \\
& = (l, k, h) \cocycleofextension{3} \canonicalmonomorphism \, {^{l Z^1}}\!((k, h) Z^2) (l, k h, g) \cocycleofextension{3} \canonicalmonomorphism \, {^{l Z^1}}\!((k h, g) Z^2) (l, k h g) Z^2 \\
& = (l, k, h) \cocycleofextension{3} \canonicalmonomorphism (l, k h, g) \cocycleofextension{3} \canonicalmonomorphism \, {^{l Z^1}}\!((k, h) Z^2) \, {^{l Z^1}}\!((k h, g) Z^2) (l, k h g) Z^2 \\
& = (l, k, h) \cocycleofextension{3} \canonicalmonomorphism (l, k h, g) \cocycleofextension{3} \canonicalmonomorphism \, {^{l Z^1}}\!((k, h) Z^2 (k h, g) Z^2) (l, k h g) Z^2 \\
& = (l, k, h) \cocycleofextension{3} \canonicalmonomorphism (l, k h, g) \cocycleofextension{3} \canonicalmonomorphism \, {^{l Z^1}}\!((k, h, g) \cocycleofextension{3} \canonicalmonomorphism \, {^{k Z^1}}\!((h, g) Z^2) (k, h g) Z^2) (l, k h g) Z^2 \\
& = (l, k, h) \cocycleofextension{3} \canonicalmonomorphism (l, k h, g) \cocycleofextension{3} \canonicalmonomorphism \, {^{l Z^1}}\!((k, h, g) \cocycleofextension{3} \canonicalmonomorphism) \, {^{(l Z^1) (k Z^1)}}((h, g) Z^2) \, {^{l Z^1}}((k, h g) Z^2) (l, k h g) Z^2 \\
& = ((l, k, h) \cocycleofextension{3} + (l, k h, g) \cocycleofextension{3} + l \act (k, h, g) \cocycleofextension{3}) \canonicalmonomorphism \, {^{(l Z^1) (k Z^1)}}((h, g) Z^2) \, {^{l Z^1}}\!((k, h g) Z^2)) (l, k h g) Z^2.
\end{align*}
By the injectivity of \(\canonicalmonomorphism\), we conclude that
\[(l k, h, g) \cocycleofextension{3} + (l, k, h g) \cocycleofextension{3} = (l, k, h) \cocycleofextension{3} + (l, k h, g) \cocycleofextension{3} + l \act (k, h, g) \cocycleofextension{3}\]
for \(g, h, k, l \in G\), that is, \(\cocycleofextension{3} \in \CocycleGroup[3]_{\text{cpt}}(G, M)\).
\end{proof}

\begin{definition}[\(3\)-cocycle of a crossed module extension with respect to a lifting system] \label{def:3-cocycle_of_a_crossed_module_extension}
We suppose given a crossed module extension \(E\) of \(G\) with \(M\).
\begin{enumerate}
\item \label{def:3-cocycle_of_a_crossed_module_extension:lifting_system} Given a lifting system \((Z^2, Z^1)\) for \(E\), we call
\begin{align*}
& \cocycleofextension{3} = \cocycleofextension{3}_{E, (Z^2, Z^1)}\colon G \cart G \cart G \map M, \\
& \qquad (k, h, g) \mapsto \big( (k, h) Z^2 (k h, g) Z^2 ((k, h g) Z^2)^{- 1} ({^{k Z^1}}((h, g) Z^2))^{- 1} \big) (\canonicalmonomorphism|^{\Image \canonicalmonomorphism})^{- 1}
\end{align*}
the \newnotion{\(3\)-cocycle of \(E\)} with respect to \((Z^2, Z^1)\).
\item \label{def:3-cocycle_of_a_crossed_module_extension:section_system} Given a section system \((s^1, s^0)\), the \(3\)-cocycle of \(E\) with respect to the lifting system \((Z^2, Z^1)\) coming from \((s^1, s^0)\) is also called the \newnotion{\(3\)-cocycle of \(E\)} with respect to \((s^1, s^0)\) and denoted by \(\cocycleofextension{3} = \cocycleofextension{3}_{E, (s^1, s^0)} := \cocycleofextension{3}_{E, (Z^2, Z^1)}\).
\end{enumerate}
\end{definition}

\begin{example} \label{ex:3-cocycle_of_the_trivial_crossed_module_extension_with_respect_to_the_unique_section_system}
As we have seen in example~\ref{ex:section_system_of_the_trivial_crossed_module_extension}, the unique section system for \(\TrivialHomomorphismCrossedModule{M}{G}\) is given by \((\triv, \id_G)\). The \(3\)-cocycle of \(\TrivialHomomorphismCrossedModule{M}{G}\) with respect to \((\triv, \id_G)\) is the trivial \(3\)-cocycle \(0 \in \CocycleGroup[3]_{\text{cpt}}(G, M)\).
\end{example}

\begin{proposition} \label{prop:choices_of_liftings_of_the_non-abelian_2-cocycle_correspond_to_3-cocycles}
We suppose given a crossed module extension \(E\) of \(G\) with \(M\) and a lifting system \((Z^2, Z^1)\) for \(E\).
\begin{enumerate}
\item \label{prop:choices_of_liftings_of_the_non-abelian_2-cocycle_correspond_to_3-cocycles:correspondence_liftings_and_2-cochains} The maps \(\tilde Z^2\colon G \cart G \map \ModulePart E\) such that \((\tilde Z^2, Z^1)\) is a lifting system for \(E\) are exactly the maps of the form \(G \cart G \map \ModulePart E, (h, g) \mapsto (h, g) c^2 \canonicalmonomorphism (h, g) Z^2\) for some componentwise pointed \(2\)-cochain \(c^2 \in \CochainComplex[2]_{\text{cpt}}(G, M)\).
\item \label{prop:choices_of_liftings_of_the_non-abelian_2-cocycle_correspond_to_3-cocycles:3-cocycle_of_lifting} For every \(2\)-cochain \(c^2 \in \CochainComplex[2]_{\text{cpt}}(G, M)\), the \(3\)-cocycle \(\cocycleofextension{3}_{E, (\tilde Z^2, Z^1)}\) of \(E\) with respect to the lifting system \((\tilde Z^2, Z^1)\), where \((h, g) \tilde Z^2 := (h, g) c^2 \canonicalmonomorphism (h, g) Z^2\) for \(g, h \in G\), is given by \(\cocycleofextension{3}_{E, (\tilde Z^2, Z^1)} = c^2 \differential + \cocycleofextension{3}_{E, (Z^2, Z^1)}\).
\end{enumerate}
\end{proposition}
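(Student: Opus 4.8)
To prove~\ref{prop:choices_of_liftings_of_the_non-abelian_2-cocycle_correspond_to_3-cocycles:correspondence_liftings_and_2-cochains}, the plan is to exploit that two lifts of one and the same morphism along \(\structuremorphism|^{\Image \structuremorphism}\) differ fibrewise by an element of \(\Kernel \structuremorphism\). Given a map \(\tilde Z^2\) with \((\tilde Z^2, Z^1)\) a lifting system, both \((h, g) \tilde Z^2\) and \((h, g) Z^2\) lie over \((h, g) \cocycleofextension{2}\) under \(\structuremorphism\), so
\[(h, g) \tilde Z^2 ((h, g) Z^2)^{- 1} \in \Kernel \structuremorphism = \Image \canonicalmonomorphism,\]
and since \(\canonicalmonomorphism\) is injective there is a unique \((h, g) c^2 \in M\) with \((h, g) c^2 \canonicalmonomorphism = (h, g) \tilde Z^2 ((h, g) Z^2)^{- 1}\), that is, with \((h, g) \tilde Z^2 = (h, g) c^2 \canonicalmonomorphism (h, g) Z^2\). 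As \(Z^2\) is componentwise pointed, \(c^2\) is componentwise pointed if and only if \(\tilde Z^2\) is. Conversely, for any \(c^2 \in \CochainComplex[2]_{\text{cpt}}(G, M)\) the map \((h, g) \mapsto (h, g) c^2 \canonicalmonomorphism (h, g) Z^2\) is a lift of \(\cocycleofextension{2}\) along \(\structuremorphism|^{\Image \structuremorphism}\), because \(\canonicalmonomorphism \structuremorphism = \triv\), and it is componentwise pointed. These two assignments are mutually inverse, which yields the claimed description.

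For~\ref{prop:choices_of_liftings_of_the_non-abelian_2-cocycle_correspond_to_3-cocycles:3-cocycle_of_lifting} I would substitute \((h, g) \tilde Z^2 = (h, g) c^2 \canonicalmonomorphism (h, g) Z^2\) into the defining expression for \(\cocycleofextension{3}_{E, (\tilde Z^2, Z^1)}\) and simplify. The structural fact that makes this work is that \(\Image \canonicalmonomorphism = \Kernel \structuremorphism\) is a central subgroup of \(\ModulePart E\), so each correction factor \((h, g) c^2 \canonicalmonomorphism\) commutes with every other factor and may be gathered at the front. The only term needing care is the conjugated one: using the action compatibility \({^{e}}(m \canonicalmonomorphism) = ((e \canonicalepimorphism) m) \canonicalmonomorphism\) of the extension together with \((k Z^1) \canonicalepimorphism = k\), I obtain
\[{^{k Z^1}}((h, g) c^2 \canonicalmonomorphism) = (k \act (h, g) c^2) \canonicalmonomorphism,\]
which is again central. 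Collecting the four resulting central factors and applying \((\canonicalmonomorphism|^{\Image \canonicalmonomorphism})^{- 1}\) produces
\[(k, h) c^2 + (k h, g) c^2 - (k, h g) c^2 - k \act (h, g) c^2,\]
whereas the remaining (generally noncommuting) factors reassemble into \((k, h, g) \cocycleofextension{3}_{E, (Z^2, Z^1)} \canonicalmonomorphism\).

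Finally I would identify the collected expression with \((k, h, g) (c^2 \differential)\) by comparing it with the coboundary formula in degree \(n = 2\); the signs match after reordering the summands. Injectivity of \(\canonicalmonomorphism\) then gives \(\cocycleofextension{3}_{E, (\tilde Z^2, Z^1)} = c^2 \differential + \cocycleofextension{3}_{E, (Z^2, Z^1)}\), as asserted. I expect the only real obstacle to be bookkeeping: keeping the order of the noncommuting factors in \(\ModulePart E\) straight while moving the central correction terms past them, and matching the signs with the explicit differential. Centrality of \(\Image \canonicalmonomorphism\) is precisely what legitimises this rearrangement.
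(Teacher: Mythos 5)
Your proposal is correct and follows essentially the same route as the paper: part~(a) via the observation that two componentwise pointed lifts of \(\cocycleofextension{2}\) along \(\structuremorphism|^{\Image \structuremorphism}\) differ by a unique componentwise pointed \(2\)-cochain valued in \(\Image \canonicalmonomorphism = \Kernel \structuremorphism\), and part~(b) by substituting into the defining relation for \(\cocycleofextension{3}\), using centrality of \(\Image \canonicalmonomorphism\) in \(\ModulePart E\) and the compatibility \({^{k Z^1}}((h, g) c^2 \canonicalmonomorphism) = (k \act (h, g) c^2) \canonicalmonomorphism\) to collect the four correction terms into \((k, h, g)(c^2 \differential)\). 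No gaps.
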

\begin{proof} \
\begin{enumerate}
\item First, we suppose given a componentwise pointed \(2\)-cochain \(c^2 \in \CochainComplex[2]_{\text{cpt}}(G, M)\). Then the map \(\tilde Z^2\colon G \cart G \map \ModulePart E, (h, g) \mapsto (h, g) c^2 \canonicalmonomorphism (h, g) Z^2\) is componentwise pointed since \(c^2\), \(Z^2\) and \(\canonicalmonomorphism\) are componentwise pointed. Moreover, we have \(\tilde Z^2 \structuremorphism|^{\Image \structuremorphism} = Z^2 \structuremorphism|^{\Image \structuremorphism} = \cocycleofextension{2}\), that is, \(\tilde Z^2\) is a lift of \(\cocycleofextension{2}\) along the underlying pointed map of \(\structuremorphism|^{\Image \structuremorphism}\). Hence \((\tilde Z^2, Z^1)\) is a lifting system for \(E\).

Conversely, we suppose given a lifting system \((\tilde Z^2, Z^1)\) for \(E\). Then \(Z^2\) and \(\tilde Z^2\) are componentwise pointed and lifts of \(\cocycleofextension{2}\) along the underlying pointed map of \(\structuremorphism|^{\Image \structuremorphism}\), that is, we have \((h, g) Z^2 \structuremorphism = (h, g) \tilde Z^2 \structuremorphism = (h, g) \cocycleofextension{2}\) for \(g, h \in G\). It follows that \((h, g) \tilde Z^2 ((h, g) Z^2)^{- 1} \in \Kernel \structuremorphism = \Image \canonicalmonomorphism\) for \(g, h \in G\). Hence we obtain a map \(c^2\colon G \cart G \map M, (h, g) \mapsto ((h, g) \tilde Z^2 ((h, g) Z^2)^{- 1}) (\canonicalmonomorphism|^{\Image \canonicalmonomorphism})^{- 1}\), that is, such that
\[(h, g) \tilde Z^2 = (h, g) c^2 \canonicalmonomorphism (h, g) Z^2\]
for \(g, h \in G\). Finally, \(c^2\) is componentwise pointed since \(Z^2\), \(\tilde Z^2\) and \(\canonicalmonomorphism\) are componentwise pointed.
\item We suppose given \(c^2 \in \CochainComplex[2]_{\text{cpt}}(G, M)\) and we define \(\tilde Z^2\colon G \cart G \map \ModulePart E, (h, g) \mapsto (h, g) c^2 \canonicalmonomorphism (h, g) Z^2\). By~\ref{prop:choices_of_liftings_of_the_non-abelian_2-cocycle_correspond_to_3-cocycles:correspondence_liftings_and_2-cochains}, \((\tilde Z^2, Z^1)\) is a lifting system for \(E\). We get
\begin{align*}
& (k, h) \tilde Z^2 (k h, g) \tilde Z^2 = (k, h) c^2 \canonicalmonomorphism (k, h) Z^2 (k h, g) c^2 \canonicalmonomorphism (k h, g) Z^2 = (k, h) c^2 \canonicalmonomorphism (k h, g) c^2 \canonicalmonomorphism (k, h) Z^2 (k h, g) Z^2 \\
& = (k, h) c^2 \canonicalmonomorphism (k h, g) c^2 \canonicalmonomorphism (k, h, g) \cocycleofextension{3}_{E, (Z^2, Z^1)} \canonicalmonomorphism \, {^{k Z^1}}\!((h, g) Z^2) (k, h g) Z^2 \\
& = (k, h) c^2 \canonicalmonomorphism (k h, g) c^2 \canonicalmonomorphism (k, h, g) \cocycleofextension{3}_{E, (Z^2, Z^1)} \canonicalmonomorphism \, {^{k Z^1}}\!(((h, g) c^2 \canonicalmonomorphism)^{- 1} (h, g) \tilde Z^2) ((k, h g) c^2 \canonicalmonomorphism)^{- 1} (k, h g) \tilde Z^2 \\
& = (k, h) c^2 \canonicalmonomorphism (k h, g) c^2 \canonicalmonomorphism (k, h, g) \cocycleofextension{3}_{E, (Z^2, Z^1)} \canonicalmonomorphism \, {^{k Z^1}}\!(((h, g) c^2 \canonicalmonomorphism)^{- 1}) \, {^{k Z^1}}\!((h, g) \tilde Z^2) ((k, h g) c^2 \canonicalmonomorphism)^{- 1} (k, h g) \tilde Z^2 \\
& = (k, h) c^2 \canonicalmonomorphism (k h, g) c^2 \canonicalmonomorphism ((k, h g) c^2 \canonicalmonomorphism)^{- 1} \, {^{k Z^1}}\!(((h, g) c^2 \canonicalmonomorphism)^{- 1}) (k, h, g) \cocycleofextension{3}_{E, (Z^2, Z^1)} \canonicalmonomorphism \, {^{k Z^1}}\!((h, g) \tilde Z^2) (k, h g) \tilde Z^2 \\
& = ((k, h) c^2 + (k h, g) c^2 - (k, h g) c^2 - k \act (h, g) c^2 + (k, h, g) \cocycleofextension{3}_{E, (Z^2, Z^1)}) \canonicalmonomorphism \, {^{k Z^1}}\!((h, g) \tilde Z^2) (k, h g) \tilde Z^2 \\
& = ((k, h, g) (c^2 \differential) + (k, h, g) \cocycleofextension{3}_{E, (Z^2, Z^1)}) \canonicalmonomorphism \, {^{k Z^1}}\!((h, g) \tilde Z^2) (k, h g) \tilde Z^2
\end{align*}
and thus \((k, h, g) \cocycleofextension{3}_{E, (\tilde Z^2, Z^1)} = (k, h, g) (c^2 \differential) + (k, h, g) \cocycleofextension{3}_{E, (Z^2, Z^1)}\) for \(g, h, k \in G\), that is,
\[\cocycleofextension{3}_{E, (\tilde Z^2, Z^1)} = c^2 \differential + \cocycleofextension{3}_{E, (Z^2, Z^1)}. \qedhere\]
\end{enumerate}
\end{proof}

\begin{proposition} \label{prop:cohomology_class_associated_to_a_crossed_module_extension}
We have a map
\[\cohomologyclassofextension\colon \Extensions{2}(G, M) \map \CohomologyGroup[3]_{\text{cpt}}(G, M)\]
that assigns to every crossed module extension \(E\) of \(G\) with \(M\) the cohomology class of the \(3\)-cocycle of \(E\) with respect to an arbitrarily chosen lifting system. The map \(\cohomologyclassofextension\) is independent from the chosen lifting system.
\end{proposition}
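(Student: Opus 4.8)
The plan is to prove three things: that lifting systems exist at all, that the associated $3$-cocycle determines a well-defined class in $\CohomologyGroup[3]_{\text{cpt}}(G, M)$, and that this class is independent of the chosen lifting system for a fixed extension $E$.

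First I would establish existence. Since $\canonicalepimorphism$ is a group epimorphism, its underlying pointed map is surjective and hence admits a pointed section $s^0\colon G \map \GroupPart E$; likewise $\structuremorphism|^{\Image \structuremorphism}$ is surjective and admits a pointed section $s^1$. Thus a section system $(s^1, s^0)$ for $E$ exists, and by remark~\ref{rem:section_systems_provide_lifting_systems} it provides a lifting system $(Z^2, Z^1)$. By remark~\ref{rem:lifting_systems_of_crossed_module_extensions_lead_to_3-cocycles}, the cochain $\cocycleofextension{3}_{E, (Z^2, Z^1)}$ is a componentwise pointed $3$-cocycle and therefore determines a class in $\CohomologyGroup[3]_{\text{cpt}}(G, M)$.

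For the independence statement I would fix $E$ and compare two arbitrary lifting systems $(Z^2, Z^1)$ and $(\tilde Z^2, \tilde Z^1)$. The case $Z^1 = \tilde Z^1$ is already settled by proposition~\ref{prop:choices_of_liftings_of_the_non-abelian_2-cocycle_correspond_to_3-cocycles}: any two componentwise pointed lifts of $\cocycleofextension{2}_{E, Z^1}$ over the same $Z^1$ differ by $c^2 \canonicalmonomorphism$ for some $c^2 \in \CochainComplex[2]_{\text{cpt}}(G, M)$, and the corresponding $3$-cocycles differ by $c^2 \differential$. Since $c^2$ is componentwise pointed, $c^2 \differential \in \CoboundaryGroup[3]_{\text{cpt}}(G, M)$ by remark~\ref{rem:k-pointedness_and_differentials}\ref{rem:k-pointedness_and_differentials:componentwise_pointedness}, so the two classes in $\CohomologyGroup[3]_{\text{cpt}}(G, M)$ coincide. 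It therefore suffices to treat a change of $Z^1$: once I can pass from $(Z^2, Z^1)$ to a suitable lifting system $(\bar Z^2, \tilde Z^1)$ without changing the class, the previous case (now with $\tilde Z^1$ fixed) lets me pass on to $(\tilde Z^2, \tilde Z^1)$. To change $Z^1$, I would use that both $Z^1$ and $\tilde Z^1$ lift $\id_G$ along $\canonicalepimorphism$, so for each $g \in G$ the element $(g \tilde Z^1)(g Z^1)^{-1}$ lies in $\Kernel \canonicalepimorphism = \Image \structuremorphism$. Lifting pointedly, I introduce a componentwise pointed comparison function $d\colon G \map \ModulePart E$ with $(g d) \structuremorphism = (g \tilde Z^1)(g Z^1)^{-1}$, so that $g \tilde Z^1 = (g d)(g Z^1)$, and from $d$ I write down an explicit componentwise pointed lift $\bar Z^2$ of $\cocycleofextension{2}_{E, \tilde Z^1}$. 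The aim is to compute $\cocycleofextension{3}_{E, (\bar Z^2, \tilde Z^1)}$ and exhibit a $c \in \CochainComplex[2]_{\text{cpt}}(G, M)$ with
\[\cocycleofextension{3}_{E, (\bar Z^2, \tilde Z^1)} = \cocycleofextension{3}_{E, (Z^2, Z^1)} + c \differential,\]
which, combined with the previous paragraph, yields equality of all three cohomology classes.

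I expect the main obstacle to be precisely this last computation. Substituting $g \tilde Z^1 = (g d)(g Z^1)$ into the defining formula for $\cocycleofextension{2}_{E, \tilde Z^1}$ and then into that for $\cocycleofextension{3}$ produces a cascade of correction terms that are a priori merely elements of $\ModulePart E$, and the crossed module axioms (Equivariance) and (Peiffer identity) must be applied carefully to commute module elements past group elements and to rearrange conjugations by $Z^1$-values. The crux is to verify that the genuine discrepancy between the two $3$-cocycles lands in $\Image \canonicalmonomorphism$, pulls back along $\canonicalmonomorphism$ to an \emph{$M$-valued}, componentwise pointed $2$-cochain $c$, and is exactly the coboundary $c \differential$; granting this, the two classes agree in $\CohomologyGroup[3]_{\text{cpt}}(G, M)$ and the map $\cohomologyclassofextension$ is well defined and independent of the chosen lifting system.
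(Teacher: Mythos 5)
Your outline follows the paper's decomposition exactly: first, independence from the choice of \(Z^2\) for a fixed \(Z^1\) via proposition~\ref{prop:choices_of_liftings_of_the_non-abelian_2-cocycle_correspond_to_3-cocycles}, which shows the two \(3\)-cocycles differ by \(c^2 \differential\) with \(c^2 \in \CochainComplex[2]_{\text{cpt}}(G, M)\); second, a change of \(Z^1\) mediated by a pointed comparison map \(d\colon G \map \ModulePart E\) with \((g d) \structuremorphism = (g \tilde Z^1) (g Z^1)^{- 1}\) (the paper factors this as a map \(c^1\colon G \map \Image \structuremorphism\) followed by a lift \(C^1\) along \(\structuremorphism|^{\Image \structuremorphism}\), which amounts to the same data). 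The existence discussion at the start is harmless, though not demanded by the statement.

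The decisive step, however, is absent: you neither write down the lift \(\bar Z^2\) explicitly nor carry out the computation of \(\cocycleofextension{3}_{E, (\bar Z^2, \tilde Z^1)}\); you state the expected conclusion and proceed ``granting this''. That computation is the entire mathematical content of the independence from \(Z^1\) --- it is where the equivariance and Peiffer axioms are actually used --- so what you have is a correct plan rather than a proof. For reference, the paper's choice
\[(h, g) \bar Z^2 := (h C^1) \, {^{h Z^1}}\!(g C^1) \, (h, g) Z^2 \, ((h g) C^1)^{- 1}\]
(with \(C^1\) playing the role of your \(d\)) is precisely the one that makes the bookkeeping vanish: one first checks that this \(\bar Z^2\) is a componentwise pointed lift of \(\cocycleofextension{2}_{E, \tilde Z^1}\), and then a direct computation --- using the Peiffer identity in the form that conjugation by \((k, h) Z^2\) agrees with conjugation by \((k, h) \cocycleofextension{2} = (k Z^1) (h Z^1) ((k h) Z^1)^{- 1}\), together with the centrality of \(\Image \canonicalmonomorphism\) in \(\ModulePart E\) --- yields \(\cocycleofextension{3}_{E, (\bar Z^2, \tilde Z^1)} = \cocycleofextension{3}_{E, (Z^2, Z^1)}\) on the nose. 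So your anticipated correction term \(c\) can be taken to be \(0\), and no further coboundary argument is needed for this half; but until that computation is actually performed, the proposition is not proved.
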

\begin{proof}
We suppose given a crossed module extension \(E\) of \(G\) with \(M\) and we choose a lifting system \((Z^2, Z^1)\) for \(E\). By proposition~\ref{prop:choices_of_liftings_of_the_non-abelian_2-cocycle_correspond_to_3-cocycles}, the cohomology class of \(\cocycleofextension{3}_{E, (Z^2, Z^1)}\) is independent from the choice of \(Z^2\). Thus it remains to show that the cohomology class of \(\cocycleofextension{3}_{E, (Z^2, Z^1)}\) is independent from the choice of \(Z^1\). To this end, we let \(\tilde Z^1\) be an alternative to \(Z^1\), that is, a section of the underlying pointed map of \(\canonicalepimorphism\). Then \((g \tilde Z^1) (g Z^1)^{- 1} \in \Kernel \canonicalepimorphism = \Image \structuremorphism\) for \(g \in G\), and we obtain a well-defined pointed map \(c^1\colon G \map \Image \structuremorphism, g \mapsto (g \tilde Z^1) (g Z^1)^{- 1}\), that is, such that
\[g \tilde Z^1 = (g c^1) (g Z^1)\]
for \(g \in G\). This implies
\begin{align*}
(h \tilde Z^1) (g \tilde Z^1) & = (h c^1) (h Z^1) (g c^1) (g Z^1) = (h c^1) \, {^{h Z^1}}\!(g c^1) (h Z^1) (g Z^1) = (h c^1) \, {^{h Z^1}}\!(g c^1) (h, g) \cocycleofextension{2}_{E, Z^1} (h g) Z^1 \\
& = (h c^1) \, {^{h Z^1}}\!(g c^1) (h, g) \cocycleofextension{2}_{E, Z^1} ((h g) c^1)^{- 1} (h g) \tilde Z^1
\end{align*}
and hence
\[(h, g) \cocycleofextension{2}_{E, \tilde Z^1} = (h c^1) \, {^{h Z^1}}\!(g c^1) (h, g) \cocycleofextension{2}_{E, Z^1} ((h g) c^1)^{- 1}\]
for \(g, h \in G\). We let \(C^1\colon G \map \ModulePart E\) be a lift of \(c^1\) along the underlying pointed map of \(\structuremorphism|^{\Image \structuremorphism}\), that is, a pointed map \(C^1\colon G \map \ModulePart E\) such that \(C^1 (\structuremorphism|^{\Image \structuremorphism}) = c^1\). Moreover, we define a lift \(\tilde Z^2\colon G \cart G \map \ModulePart E\) of \(\cocycleofextension{2}_{E, \tilde Z^1}\) along the underlying pointed map of \(\structuremorphism |^{\Image \structuremorphism}\) by \((h, g) \tilde Z^2 := (h C^1) \, {^{h Z^1}}\!(g C^1) (h, g) Z^2 ((h g) C^1)^{- 1}\) (\footnote{This is possible since the independence from the choice of this lift has already been shown.}), that is, such that
\[(h, g) \tilde Z^2 (h g) C^1 = (h C^1) \, {^{h Z^1}}\!(g C^1) (h, g) Z^2\]
for \(g, h \in G\). Since \(Z^1\), \(C^1\) and \(Z^2\) are componentwise pointed, we have
\begin{align*}
& (g, 1) \tilde Z^2 = (g C^1) \, {^{g Z^1}}\!(1 C^1) (g, 1) Z^2 (g C^1)^{- 1} = 1 \text{ and} \\
& (1, g) \tilde Z^2 = (1 C^1) \, {^{1 Z^1}}\!(g C^1) (1, g) Z^2 (g C^1)^{- 1} = 1
\end{align*}
for all \(g \in G\), whence \(\tilde Z^2\) is also componentwise pointed. Finally, we compute
\begin{align*}
(k, h) \tilde Z^2 (k h, g) \tilde Z^2 (k h g) C^1 & = (k, h) \tilde Z^2 (k h) C^1 \, {^{(k h) Z^1}}\!(g C^1) (k h, g) Z^2 \\
& = (k C^1) \, {^{k Z^1}}\!(h C^1) (k, h) Z^2 \, {^{(k h) Z^1}}\!(g C^1) (k h, g) Z^2 \\
& = (k C^1) \, {^{k Z^1}}\!(h C^1) \, {^{(k, h) Z^2 (k h) Z^1}}(g C^1) (k, h) Z^2 (k h, g) Z^2 \\
& = (k C^1) \, {^{k Z^1}}\!(h C^1) \, {^{(k Z^1) (h Z^1)}}(g C^1) (k, h, g) \cocycleofextension{3}_{E, (Z^2, Z^1)} \canonicalmonomorphism \, {^{k Z^1}}\!((h, g) Z^2) (k, h g) Z^2 \\
& = (k, h, g) \cocycleofextension{3}_{E, (Z^2, Z^1)} \canonicalmonomorphism (k C^1) \, {^{k Z^1}}\!(h C^1) \, {^{(k Z^1) (h Z^1)}}(g C^1) \, {^{k Z^1}}\!((h, g) Z^2) (k, h g) Z^2 \\
& = (k, h, g) \cocycleofextension{3}_{E, (Z^2, Z^1)} \canonicalmonomorphism (k C^1) \, {^{k Z^1}}\!((h C^1) \, {^{h Z^1}}\!(g C^1) (h, g) Z^2) (k, h g) Z^2 \\
& = (k, h, g) \cocycleofextension{3}_{E, (Z^2, Z^1)} \canonicalmonomorphism (k C^1) \, {^{k Z^1}}\!((h, g) \tilde Z^2 (h g) C^1) (k, h g) Z^2 \\
& = (k, h, g) \cocycleofextension{3}_{E, (Z^2, Z^1)} \canonicalmonomorphism \, {^{(k C^1) (k Z^1)}}((h, g) \tilde Z^2) (k C^1) \, {^{k Z^1}}\!((h g) C^1) (k, h g) Z^2 \\
& = (k, h, g) \cocycleofextension{3}_{E, (Z^2, Z^1)} \canonicalmonomorphism \, {^{(k c^1) (k Z^1)}}((h, g) \tilde Z^2) (k, h g) \tilde Z^2 (k h g) C^1 \\
& = (k, h, g) \cocycleofextension{3}_{E, (Z^2, Z^1)} \canonicalmonomorphism \, {^{k \tilde Z^1}}\!((h, g) \tilde Z^2) (k, h g) \tilde Z^2 (k h g) C^1
\end{align*}
for \(g, h, k \in G\). It follows that
\[(k, h) \tilde Z^2 (k h, g) \tilde Z^2 = (k, h, g) \cocycleofextension{3}_{E, (Z^2, Z^1)} \canonicalmonomorphism \, {^{k \tilde Z^1}}\!((h, g) \tilde Z^2) (k, h g) \tilde Z^2\]
for \(g, h, k \in G\), that is, \(\cocycleofextension{3}_{E, (\tilde Z^2, \tilde Z^1)} = \cocycleofextension{3}_{E, (Z^2, Z^1)}\).
\end{proof}

\begin{definition}[cohomology class associated to a crossed module extension] \label{def:cohomology_class_associated_to_a_crossed_module_extension} \
Given a crossed module extension \(E\) of \(G\) with \(M\), the cohomology class \(\cohomologyclassofextension(E) := \cocycleofextension{3}_{E, (Z^2, Z^1)} + \CoboundaryGroup[3]_{\text{cpt}}(G, M) \in \CohomologyGroup[3]_{\text{cpt}}(G, M)\) for an arbitrarily chosen lifting system \((Z^2, Z^1)\) for \(E\) is called the \newnotion{cohomology class associated to \(E\)}.
\end{definition}

\begin{example} \label{ex:cohomology_class_of_trivial_extension}
Following example~\ref{ex:3-cocycle_of_the_trivial_crossed_module_extension_with_respect_to_the_unique_section_system}, we have \(\cohomologyclassofextension(\TrivialHomomorphismCrossedModule{M}{G}) = 0\). 
\end{example}

Our next aim is to show that the cohomology class associated to a crossed module extension is independent from a chosen representative in its crossed module extension class.

\begin{proposition} \label{prop:compatible_choices_along_crossed_module_extensions_provide_compatible_cocycles}
We let \(E\) and \(\tilde E\) be crossed module extensions of \(G\) with \(M\) and we let \(\varphi\colon E \map \tilde E\) be an extension equivalence.
\[\begin{tikzpicture}[baseline=(m-2-1.base)]
  \matrix (m) [diagram]{
    M & \ModulePart E & \GroupPart E & G \\
    M & \ModulePart \tilde E & \GroupPart \tilde E & G \\};
  \path[->, font=\scriptsize]
    (m-1-1) edge node[above] {\(\canonicalmonomorphism[E]\)} (m-1-2)
            edge[equality] (m-2-1)
    (m-1-2) edge node[above] {\(\structuremorphism[E]\)} (m-1-3)
            edge node[right] {\(\ModulePart \varphi\)} (m-2-2)
    (m-1-3) edge node[above] {\(\canonicalepimorphism[E]\)} (m-1-4)
            edge node[right] {\(\GroupPart \varphi\)} (m-2-3)
    (m-1-4) edge[equality] (m-2-4)
    (m-2-1) edge node[above] {\(\canonicalmonomorphism[\tilde E]\)} (m-2-2)
    (m-2-2) edge node[above] {\(\structuremorphism[\tilde E]\)} (m-2-3)
    (m-2-3) edge node[above] {\(\canonicalepimorphism[\tilde E]\)} (m-2-4);
\end{tikzpicture}\]
\begin{enumerate}
\item \label{prop:compatible_choices_along_crossed_module_extensions_provide_compatible_cocycles:section_of_the_canonical_epimorphism} We suppose given a section \(s^0\) of the underlying pointed map of \(\canonicalepimorphism[E]\) and a section \(\tilde s^0\) of the underlying pointed map of \(\canonicalepimorphism[\tilde E]\) with \(\tilde s^0 = s^0 (\GroupPart \varphi)\). Then we have \(\cocycleofextension{2}_{\tilde E, \tilde s^0} = \cocycleofextension{2}_{E, s^0} (\GroupPart \varphi)|_{\Image \structuremorphism[E]}^{\Image \structuremorphism[\tilde E]}\).
\item \label{prop:compatible_choices_along_crossed_module_extensions_provide_compatible_cocycles:lifting_systems} We suppose given a lifting system \((Z^2, Z^1)\) for \(E\) and a lifting system \((\tilde Z^2, \tilde Z^1)\) for \(\tilde E\) with \(\tilde Z^1 = Z^1 (\GroupPart \varphi)\) and \(\tilde Z^2 = Z^2 (\ModulePart \varphi)\). Then we have \(\cocycleofextension{2}_{\tilde E, \tilde Z^1} = \cocycleofextension{2}_{E, Z^1} (\GroupPart \varphi)|_{\Image \structuremorphism[E]}^{\Image \structuremorphism[\tilde E]}\) and \(\cocycleofextension{3}_{\tilde E, (\tilde Z^2, \tilde Z^1)} = \cocycleofextension{3}_{E, (Z^2, Z^1)}\).

In particular, \(\cohomologyclassofextension(\tilde E) = \cohomologyclassofextension(E)\).
\item \label{prop:compatible_choices_along_crossed_module_extensions_provide_compatible_cocycles:section_systems} We suppose given a section system \((s^1, s^0)\) for \(E\) and a section system \((\tilde s^1, \tilde s^0)\) for \(\tilde E\) with \(\tilde s^0 = s^0 (\GroupPart \varphi)\) and \(s^1 (\ModulePart \varphi) = (\GroupPart \varphi)|_{\Image \structuremorphism[E]}^{\Image \structuremorphism[\tilde E]} \tilde s^1\). Moreover, we let \((Z^2, Z^1)\) be the lifting system coming from \((s^1, s^0)\) and \((\tilde Z^2, \tilde Z^1)\) be the section system coming from \((\tilde s^1, \tilde s^0)\). Then we have \(\tilde Z^1 = Z^1 (\GroupPart \varphi)\) and \(\tilde Z^2 = Z^2 (\ModulePart \varphi)\) as well as \(\cocycleofextension{2}_{\tilde E, \tilde s^0} = \cocycleofextension{2}_{E, s^0} (\GroupPart \varphi)|_{\Image \structuremorphism[E]}^{\Image \structuremorphism[\tilde E]}\) and \(\cocycleofextension{3}_{\tilde E, (\tilde s^1, \tilde s^0)} = \cocycleofextension{3}_{E, (s^1, s^0)}\).
\end{enumerate}
\end{proposition}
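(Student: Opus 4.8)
The plan is to prove all three parts by direct computation from the defining formulas, exploiting two structural features of the extension equivalence \(\varphi\): the equalities \(\canonicalmonomorphism[\tilde E] = \canonicalmonomorphism[E] (\ModulePart \varphi)\) and \(\canonicalepimorphism[E] = (\GroupPart \varphi) \canonicalepimorphism[\tilde E]\) from the definition of an extension equivalence, together with the two crossed module morphism axioms \((\ModulePart \varphi) \structuremorphism[\tilde E] = \structuremorphism[E] (\GroupPart \varphi)\) and \(({^g}m)(\ModulePart \varphi) = {^{g (\GroupPart \varphi)}}(m (\ModulePart \varphi))\). Part~(a) is the base case, part~(b) builds on it, and part~(c) reduces to part~(b). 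Before starting I would record that \(\GroupPart \varphi\) maps \(\Image \structuremorphism[E]\) into \(\Image \structuremorphism[\tilde E]\), so that the restriction \((\GroupPart \varphi)|_{\Image \structuremorphism[E]}^{\Image \structuremorphism[\tilde E]}\) in the statement is well-defined: for \(m \in \ModulePart E\) we have \((m \structuremorphism[E]) (\GroupPart \varphi) = m ((\ModulePart \varphi) \structuremorphism[\tilde E]) = (m (\ModulePart \varphi)) \structuremorphism[\tilde E] \in \Image \structuremorphism[\tilde E]\).

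For part~(a) I would substitute \(\tilde s^0 = s^0 (\GroupPart \varphi)\) into the definition \((h, g) \cocycleofextension{2}_{\tilde E, \tilde s^0} = (h \tilde s^0)(g \tilde s^0)((h g) \tilde s^0)^{-1}\) and use that \(\GroupPart \varphi\) is a group homomorphism to pull it outside the product, obtaining \((h, g) \cocycleofextension{2}_{\tilde E, \tilde s^0} = ((h, g) \cocycleofextension{2}_{E, s^0}) (\GroupPart \varphi)\). Since \((h, g) \cocycleofextension{2}_{E, s^0} \in \Kernel \canonicalepimorphism[E] = \Image \structuremorphism[E]\), this is exactly the asserted identity with the restriction of \(\GroupPart \varphi\).

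For part~(b), the statement about \(\cocycleofextension{2}\) is immediately part~(a) applied with \(s^0 = Z^1\) and \(\tilde s^0 = \tilde Z^1\). For the \(3\)-cocycle I would apply \(\ModulePart \varphi\) to the product \(\theta := (k, h) Z^2 (k h, g) Z^2 ((k, h g) Z^2)^{-1} ({^{k Z^1}}((h, g) Z^2))^{-1}\) defining \((k, h, g) \cocycleofextension{3}_{E, (Z^2, Z^1)} \canonicalmonomorphism[E]\). Using \(\tilde Z^2 = Z^2 (\ModulePart \varphi)\), \(\tilde Z^1 = Z^1 (\GroupPart \varphi)\), that \(\ModulePart \varphi\) is a homomorphism, and---crucially---the action axiom, which yields \(({^{k Z^1}}((h, g) Z^2)) (\ModulePart \varphi) = {^{k \tilde Z^1}}((h, g) \tilde Z^2)\), I expect the corresponding product \(\tilde\theta\) for \(\tilde E\) to equal \(\theta (\ModulePart \varphi)\). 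Then \(\theta (\ModulePart \varphi) = ((k, h, g) \cocycleofextension{3}_{E, (Z^2, Z^1)} \canonicalmonomorphism[E]) (\ModulePart \varphi) = (k, h, g) \cocycleofextension{3}_{E, (Z^2, Z^1)} \canonicalmonomorphism[\tilde E]\) by \(\canonicalmonomorphism[E] (\ModulePart \varphi) = \canonicalmonomorphism[\tilde E]\), and injectivity of \(\canonicalmonomorphism[\tilde E]\) gives \(\cocycleofextension{3}_{\tilde E, (\tilde Z^2, \tilde Z^1)} = \cocycleofextension{3}_{E, (Z^2, Z^1)}\). For the ``in particular'' clause I would start from an arbitrary lifting system \((Z^2, Z^1)\) for \(E\), set \(\tilde Z^1 := Z^1 (\GroupPart \varphi)\) and \(\tilde Z^2 := Z^2 (\ModulePart \varphi)\), and check these form a lifting system for \(\tilde E\): the section property from \(\canonicalepimorphism[E] = (\GroupPart \varphi) \canonicalepimorphism[\tilde E]\), the lift property from \(\tilde Z^2 \structuremorphism[\tilde E] = (Z^2 \structuremorphism[E])(\GroupPart \varphi) = \cocycleofextension{2}_{E, Z^1}(\GroupPart \varphi) = \cocycleofextension{2}_{\tilde E, \tilde Z^1}\), and componentwise pointedness from that of \(Z^2\). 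The cocycle equality then gives \(\cohomologyclassofextension(\tilde E) = \cohomologyclassofextension(E)\).

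Finally, for part~(c) I would verify the hypotheses of part~(b) for the lifting systems coming from the two section systems. The equality \(\tilde Z^1 = \tilde s^0 = s^0 (\GroupPart \varphi) = Z^1 (\GroupPart \varphi)\) is immediate. For \(\tilde Z^2 = \cocycleofextension{2}_{\tilde E, \tilde s^0} \, \tilde s^1\), I would substitute part~(a) and then the compatibility condition \(s^1 (\ModulePart \varphi) = (\GroupPart \varphi)|_{\Image \structuremorphism[E]}^{\Image \structuremorphism[\tilde E]} \tilde s^1\) to compute \(\tilde Z^2 = \cocycleofextension{2}_{E, s^0} (\GroupPart \varphi)|_{\Image \structuremorphism[E]}^{\Image \structuremorphism[\tilde E]} \tilde s^1 = \cocycleofextension{2}_{E, s^0} s^1 (\ModulePart \varphi) = Z^2 (\ModulePart \varphi)\). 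With both hypotheses met, part~(b) delivers the two cocycle equalities. The only genuine difficulty is the bookkeeping in the \(3\)-cocycle computation of part~(b); everything else is unwinding definitions, and the action axiom for crossed module morphisms is the one nontrivial ingredient making the action term transform correctly.
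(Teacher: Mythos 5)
Your proposal is correct and follows essentially the same route as the paper: part~(a) by substituting \(\tilde s^0 = s^0 (\GroupPart \varphi)\) and using that \(\GroupPart \varphi\) is a homomorphism, part~(b) by pushing the defining relation for \(\cocycleofextension{3}_{E, (Z^2, Z^1)}\) through \(\ModulePart \varphi\) using the crossed module morphism action axiom and \(\canonicalmonomorphism[E] (\ModulePart \varphi) = \canonicalmonomorphism[\tilde E]\), and part~(c) by reducing to part~(b) via the computation \(\tilde Z^2 = \cocycleofextension{2}_{\tilde E, \tilde s^0} \tilde s^1 = \cocycleofextension{2}_{E, s^0} s^1 (\ModulePart \varphi) = Z^2 (\ModulePart \varphi)\). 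Your extra verification in the ``in particular'' clause that \((Z^1 (\GroupPart \varphi), Z^2 (\ModulePart \varphi))\) is itself a lifting system is not needed under the stated hypotheses (the paper defers it to the following proposition), but it is harmless and correct.
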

\begin{proof} \
\begin{enumerate}
\item We have
\begin{align*}
(h \tilde s^0) (g \tilde s^0) & = (h s^0 \varphi) (g s^0 \varphi) = ((h s^0) (g s^0)) \varphi = ((h, g) \cocycleofextension{2}_{E, s^0} (h g) s^0) \varphi = (h, g) \cocycleofextension{2}_{E, s^0} \varphi (h g) s^0 \varphi \\
& = (h, g) \cocycleofextension{2}_{E, s^0} \varphi (h g) \tilde s^0
\end{align*}
for \(g, h \in G\) and thus \(\cocycleofextension{2}_{\tilde E, \tilde s^0} = \cocycleofextension{2}_{E, s^0} (\GroupPart \varphi)|_{\Image \structuremorphism[E]}^{\Image \structuremorphism[\tilde E]}\).
\item By~\ref{prop:compatible_choices_along_crossed_module_extensions_provide_compatible_cocycles:section_of_the_canonical_epimorphism}, we have \(\cocycleofextension{2}_{\tilde E, \tilde Z^1} = \cocycleofextension{2}_{\tilde E, Z^1} (\GroupPart \varphi)|_{\Image \structuremorphism[E]}^{\Image \structuremorphism[\tilde E]}\). Further, we obtain
\begin{align*}
(k, h) \tilde Z^2 (k h, g) \tilde Z^2 & = (k, h) Z^2 \varphi (k h, g) Z^2 \varphi = ((k, h) Z^2 (k h, g) Z^2) \varphi \\
& = ((k, h, g) \cocycleofextension{3}_{E, (Z^2, Z^1)} \canonicalmonomorphism[E] \, {^{k Z^1}}\!((h, g) Z^2) (k, h g) Z^2) \varphi \\ 
& = (k, h, g) \cocycleofextension{3}_{E, (Z^2, Z^1)} \canonicalmonomorphism[E] \varphi \, {^{k Z^1 \varphi}}\!((h, g) Z^2 \varphi) (k, h g) Z^2 \varphi \\ 
& = (k, h, g) \cocycleofextension{3}_{E, (Z^2, Z^1)} \canonicalmonomorphism[\tilde E] \, {^{k \tilde Z^1}}\!((h, g) \tilde Z^2) (k, h g) \tilde Z^2 
\end{align*}
for \(g, h, k \in G\), that is, \(\cocycleofextension{3}_{\tilde E, (\tilde Z^2, \tilde Z^1)} = \cocycleofextension{3}_{E, (Z^2, Z^1)}\). In particular, it follows that
\[\cohomologyclassofextension(\tilde E) = \cocycleofextension{3}_{\tilde E, (\tilde Z^2, \tilde Z^1)} + \CoboundaryGroup[3]_{\text{cpt}}(G, M) = \cocycleofextension{3}_{E, (Z^2, Z^1)} + \CoboundaryGroup[3]_{\text{cpt}}(G, M) = \cohomologyclassofextension(E).\]
\item First of all, we get \(\cocycleofextension{2}_{\tilde E, \tilde s^0} = \cocycleofextension{2}_{E, s^0} (\GroupPart \varphi)|_{\Image \structuremorphism[E]}^{\Image \structuremorphism[\tilde E]}\) by~\ref{prop:compatible_choices_along_crossed_module_extensions_provide_compatible_cocycles:section_of_the_canonical_epimorphism}. Since the lifting systems \((Z^2, Z^1)\) resp.\ \((\tilde Z^2, \tilde Z^1)\) come from the section systems \((s^1, s^0)\) resp.\ \((\tilde s^1, \tilde s^0)\), we have \((Z^2, Z^1) = (\cocycleofextension{2}_{E, s^0} s^1, s^0)\) and \((\tilde Z^2, \tilde Z^1) = (\cocycleofextension{2}_{\tilde E, \tilde s^0} \tilde s^1, \tilde s^0)\). This implies
\[\tilde Z^1 = \tilde s^0 = s^0 (\GroupPart \varphi) = Z^1 (\GroupPart \varphi)\]
as well as
\[\tilde Z^2 = \cocycleofextension{2}_{\tilde E, \tilde s^0} \tilde s^1 = \cocycleofextension{2}_{E, s^0} (\GroupPart \varphi)|_{\Image \structuremorphism[E]}^{\Image \structuremorphism[\tilde E]} \tilde s^1 = \cocycleofextension{2}_{E, s^0} s^1 (\ModulePart \varphi) = Z^2 (\ModulePart \varphi).\]
Now~\ref{prop:compatible_choices_along_crossed_module_extensions_provide_compatible_cocycles:lifting_systems} yields
\[\cocycleofextension{3}_{\tilde E, (\tilde s^1, \tilde s^0)} = \cocycleofextension{3}_{\tilde E, (\tilde Z^2, \tilde Z^1)} = \cocycleofextension{3}_{E, (Z^2, Z^1)} = \cocycleofextension{3}_{E, (s^1, s^0)}. \qedhere\]
\end{enumerate}
\end{proof}

\begin{proposition} \label{prop:construction_of_compatible_choices_along_crossed_module_extensions}
We let \(E\) and \(\tilde E\) be crossed module extensions of \(G\) with \(M\) and we let \(\varphi\colon E \map \tilde E\) be an extension equivalence.
\begin{enumerate}
\item \label{prop:construction_of_compatible_choices_along_crossed_module_extensions:section_of_the_canonical_epimorphism} For every section \(s^0\) of the underlying pointed map of \(\canonicalepimorphism[E]\), the pointed map \(\tilde s_0 := s_0 (\GroupPart \varphi)\) is a section of the underlying pointed map of \(\canonicalepimorphism[\tilde E]\).
\item \label{prop:construction_of_compatible_choices_along_crossed_module_extensions:lifting_systems} We suppose given a lifting system \((Z^2, Z^1)\) for \(E\). Setting \(\tilde Z^1 := Z^1 (\GroupPart \varphi)\) and \(\tilde Z^2 := Z^2 (\ModulePart \varphi)\), we obtain a lifting system \((\tilde Z^2, \tilde Z^1)\) for \(\tilde E\).
\item \label{prop:construction_of_compatible_choices_along_crossed_module_extensions:sections_of_structure_morphism} For every section \(\tilde s^1\) of the underlying pointed map of \(\structuremorphism[\tilde E]|^{\Image \structuremorphism[\tilde E]}\), there exists exactly one section \(s^1\) of the underlying pointed map of \(\structuremorphism[E]|^{\Image \structuremorphism[E]}\) with \(s^1 (\ModulePart \varphi) = (\GroupPart \varphi)|_{\Image \structuremorphism[E]}^{\Image \structuremorphism[\tilde E]} \tilde s^1\). It is constructed as follows: For an arbitrarily chosen section \(s'^1\) of the underlying pointed map of \(\structuremorphism[E]|^{\Image \structuremorphism[E]}\), we have \(g s^1 = \big( (g \varphi \tilde s^1) (g s'^1 \varphi)^{- 1} \big) (\canonicalmonomorphism[\tilde E]|^{\Image \canonicalmonomorphism[\tilde E]})^{- 1} \canonicalmonomorphism[E] \, (g s'^1)\) for \(g \in \Image \structuremorphism[E]\).
\end{enumerate}
\end{proposition}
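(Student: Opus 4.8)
The plan is to treat parts~\ref{prop:construction_of_compatible_choices_along_crossed_module_extensions:section_of_the_canonical_epimorphism} and~\ref{prop:construction_of_compatible_choices_along_crossed_module_extensions:lifting_systems} as formal consequences of the defining relations of an extension equivalence, namely \(\canonicalepimorphism[E] = (\GroupPart \varphi) \canonicalepimorphism[\tilde E]\), \(\canonicalmonomorphism[\tilde E] = \canonicalmonomorphism[E] (\ModulePart \varphi)\) and the crossed module morphism identity \((\ModulePart \varphi) \structuremorphism[\tilde E] = \structuremorphism[E] (\GroupPart \varphi)\), and to reserve the real work for part~\ref{prop:construction_of_compatible_choices_along_crossed_module_extensions:sections_of_structure_morphism}. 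For~\ref{prop:construction_of_compatible_choices_along_crossed_module_extensions:section_of_the_canonical_epimorphism} I would simply compute \(\tilde s^0 \canonicalepimorphism[\tilde E] = s^0 (\GroupPart \varphi) \canonicalepimorphism[\tilde E] = s^0 \canonicalepimorphism[E] = \id_G\); pointedness of \(\tilde s^0\) is inherited from that of \(s^0\) since \(\GroupPart \varphi\) is a group homomorphism, so \(\tilde s^0\) is a section of the underlying pointed map of \(\canonicalepimorphism[\tilde E]\).

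For~\ref{prop:construction_of_compatible_choices_along_crossed_module_extensions:lifting_systems}, the section property of \(\tilde Z^1 = Z^1 (\GroupPart \varphi)\) is exactly~\ref{prop:construction_of_compatible_choices_along_crossed_module_extensions:section_of_the_canonical_epimorphism} applied to \(s^0 = Z^1\), and \(\tilde Z^2 = Z^2 (\ModulePart \varphi)\) is componentwise pointed because \(Z^2\) is and \(\ModulePart \varphi\) is a homomorphism. The one point needing verification is that \(\tilde Z^2\) lifts \(\cocycleofextension{2}_{\tilde E, \tilde Z^1}\) along \(\structuremorphism[\tilde E]|^{\Image \structuremorphism[\tilde E]}\). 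Here I would chain the lift identity \(Z^2 \structuremorphism[E] = \cocycleofextension{2}_{E, Z^1}\) with the morphism relation to obtain \(\tilde Z^2 \structuremorphism[\tilde E] = Z^2 \structuremorphism[E] (\GroupPart \varphi) = \cocycleofextension{2}_{E, Z^1} (\GroupPart \varphi)|_{\Image \structuremorphism[E]}^{\Image \structuremorphism[\tilde E]}\), and then invoke proposition~\ref{prop:compatible_choices_along_crossed_module_extensions_provide_compatible_cocycles}\ref{prop:compatible_choices_along_crossed_module_extensions_provide_compatible_cocycles:section_of_the_canonical_epimorphism} (legitimately, since \(\tilde Z^1 = Z^1(\GroupPart \varphi)\) is already known to be a section) to identify this with \(\cocycleofextension{2}_{\tilde E, \tilde Z^1}\). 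This exhibits \((\tilde Z^2, \tilde Z^1)\) as a lifting system for \(\tilde E\).

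Part~\ref{prop:construction_of_compatible_choices_along_crossed_module_extensions:sections_of_structure_morphism} is where the substance lies, and I would split it into existence (via the displayed formula) and uniqueness. For existence, the first task is to check that the formula is well-defined, i.e.\ that \((g \varphi \tilde s^1) (g s'^1 \varphi)^{- 1}\) lands in \(\Image \canonicalmonomorphism[\tilde E] = \Kernel \structuremorphism[\tilde E]\); applying \(\structuremorphism[\tilde E]\) sends both factors to \(g \varphi\) (using that \(\tilde s^1\) and \(s'^1\) are sections together with the morphism relation), so the product maps to \(1\) as required. Granting this, \(g s^1\) is a genuine element of \(\ModulePart E\), it is pointed, and it is a section of the underlying pointed map of \(\structuremorphism[E]|^{\Image \structuremorphism[E]}\) because the leading factor lies in \(\Image \canonicalmonomorphism[E] = \Kernel \structuremorphism[E]\) and hence dies under \(\structuremorphism[E]\), leaving \((g s'^1) \structuremorphism[E] = g\). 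The key computation is the compatibility \(s^1 (\ModulePart \varphi) = (\GroupPart \varphi)|_{\Image \structuremorphism[E]}^{\Image \structuremorphism[\tilde E]} \tilde s^1\): applying \(\ModulePart \varphi\) and using \(\canonicalmonomorphism[E] (\ModulePart \varphi) = \canonicalmonomorphism[\tilde E]\) together with \((\canonicalmonomorphism[\tilde E]|^{\Image \canonicalmonomorphism[\tilde E]})^{- 1} \canonicalmonomorphism[\tilde E] = \inc\), the leading factor telescopes back to \((g \varphi \tilde s^1) (g s'^1 \varphi)^{- 1}\), which then cancels against \((g s'^1) \varphi\) to leave precisely \((g \varphi) \tilde s^1\). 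I expect this telescoping bookkeeping, mixing the two crossed modules and the several restrictions, to be the main obstacle.

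Finally, for uniqueness I would argue that \(\ModulePart \varphi\) is injective on \(\Kernel \structuremorphism[E] = \Image \canonicalmonomorphism[E]\), which is immediate from \(\canonicalmonomorphism[E] (\ModulePart \varphi) = \canonicalmonomorphism[\tilde E]\) being a monomorphism. If \(s^1\) and \(\hat s^1\) both satisfy the section property and the compatibility, then for each \(g\) the element \((g s^1) (g \hat s^1)^{- 1}\) lies in \(\Kernel \structuremorphism[E]\) (both have \(\structuremorphism[E]\)-image \(g\)) and in \(\Kernel (\ModulePart \varphi)\) (both have the same image \((g \varphi) \tilde s^1\) under \(\ModulePart \varphi\)); by the injectivity just noted this forces \(g s^1 = g \hat s^1\). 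Together with existence, this yields the asserted unique \(s^1\) and its explicit description.
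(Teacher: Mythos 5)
Your proposal is correct and follows essentially the same route as the paper: parts (a) and (b) by direct computation with the defining relations of an extension equivalence (using proposition~\ref{prop:compatible_choices_along_crossed_module_extensions_provide_compatible_cocycles}\ref{prop:compatible_choices_along_crossed_module_extensions_provide_compatible_cocycles:section_of_the_canonical_epimorphism} to identify \(\cocycleofextension{2}_{\tilde E, \tilde Z^1}\)), and part (c) by checking that \((g \varphi \tilde s^1)(g s'^1 \varphi)^{-1}\) lies in \(\Kernel \structuremorphism[\tilde E] = \Image \canonicalmonomorphism[\tilde E]\) and then verifying the section and compatibility properties of the displayed formula. Your uniqueness argument via injectivity of \(\ModulePart \varphi\) on \(\Image \canonicalmonomorphism[E]\) is only a cosmetic rephrasing of the paper's direct derivation of the formula from the compatibility condition; both rest on \(\canonicalmonomorphism[E] (\ModulePart \varphi) = \canonicalmonomorphism[\tilde E]\) being a monomorphism.
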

\begin{proof} \
\begin{enumerate}
\item We suppose given a section \(s^0\) of the underlying pointed map of \(\canonicalepimorphism[E]\). Setting \(\tilde s^0 := s^0 (\GroupPart \varphi)\), we get
\[\tilde s^0 \canonicalepimorphism[\tilde E] = s^0 (\GroupPart \varphi) \canonicalepimorphism[\tilde E] = s^0 \canonicalepimorphism[E] = \id_G,\]
that is, \(\tilde s^0\) is a section of the underlying pointed map of \(\canonicalepimorphism[\tilde E]\).
\item We have \(\tilde Z^1 \canonicalepimorphism[\tilde E] = \id_G\) by~\ref{prop:construction_of_compatible_choices_along_crossed_module_extensions:section_of_the_canonical_epimorphism} and therefore \(\cocycleofextension{2}_{\tilde E, \tilde Z^1} = \cocycleofextension{2}_{E, Z^1} (\GroupPart \varphi)|_{\Image \structuremorphism[E]}^{\Image \structuremorphism[\tilde E]}\) by  proposition~\ref{prop:compatible_choices_along_crossed_module_extensions_provide_compatible_cocycles}\ref{prop:compatible_choices_along_crossed_module_extensions_provide_compatible_cocycles:section_of_the_canonical_epimorphism}. Further, \(\tilde Z^2\) is a lift of \(\cocycleofextension{2}_{\tilde E, \tilde Z^1}\) along the underlying pointed map of \(\structuremorphism[\tilde E]|^{\Image \structuremorphism[\tilde E]}\) since
\[\tilde Z^2 (\structuremorphism[\tilde E]|^{\Image \structuremorphism[\tilde E]}) = Z^2 (\ModulePart \varphi) (\structuremorphism[\tilde E]|^{\Image \structuremorphism[\tilde E]}) = Z^2 (\structuremorphism[E]|^{\Image \structuremorphism[E]}) ((\GroupPart \varphi)|_{\Image \structuremorphism[E]}^{\Image \structuremorphism[\tilde E]}) = \cocycleofextension{2}_{E, Z^1} ((\GroupPart \varphi)|_{\Image \structuremorphism[E]}^{\Image \structuremorphism[\tilde E]}) = \cocycleofextension{2}_{\tilde E, \tilde Z^1},\]
and it is componentwise pointed since \(Z^2\) and \(\ModulePart \varphi\) are componentwise pointed. Thus \((\tilde Z^2, \tilde Z^1)\) is a lifting system for \(E\).
\item We suppose given a section \(\tilde s^1\) of the underlying pointed map of \(\structuremorphism[\tilde E]|^{\Image \structuremorphism[\tilde E]}\) and we choose a section \(s'^1\) of the underlying pointed map of \(\structuremorphism[E]|^{\Image \structuremorphism[E]}\). Then
\[g s'^1 \varphi \structuremorphism[\tilde E] = g s'^1 \structuremorphism[E] \varphi = g \varphi = g \varphi \tilde s^1 \structuremorphism[\tilde E]\]
and hence \((g \varphi \tilde s^1) (g s'^1 \varphi)^{- 1} \in \Kernel \structuremorphism[\tilde E] = \Image \canonicalmonomorphism[\tilde E]\) for \(g \in \Image \structuremorphism[E]\). Thus we obtain a well-defined pointed map
\[s^1\colon \Image \structuremorphism[E] \map \ModulePart E, g \mapsto \big( (g \varphi \tilde s^1) (g s'^1 \varphi)^{- 1} \big) (\canonicalmonomorphism[\tilde E]|^{\Image \canonicalmonomorphism[\tilde E]})^{- 1} \canonicalmonomorphism[E] \, (g s'^1)\]
with
\[g s^1 \structuremorphism[E] = \big( ((g \varphi \tilde s^1) (g s'^1 \varphi)^{- 1}) (\canonicalmonomorphism[\tilde E]|^{\Image \canonicalmonomorphism[\tilde E]})^{- 1} \canonicalmonomorphism[E] \, (g s'^1) \big) \structuremorphism[E] = g s'^1 \structuremorphism[E] = g\]
for all \(g \in \Image \structuremorphism[E]\), that is, \(s^1\) is a section of the underlying pointed map of \(\structuremorphism[E]|^{\Image \structuremorphism[E]}\). Moreover, we have
\begin{align*}
g s^1 \varphi & = \big( ((g \varphi \tilde s^1) (g s'^1 \varphi)^{- 1}) (\canonicalmonomorphism[\tilde E]|^{\Image \canonicalmonomorphism[\tilde E]})^{- 1} \canonicalmonomorphism[E] \, (g s'^1) \big) \varphi = ((g \varphi \tilde s^1) (g s'^1 \varphi)^{- 1}) (\canonicalmonomorphism[\tilde E]|^{\Image \canonicalmonomorphism[\tilde E]})^{- 1} \canonicalmonomorphism[E] \varphi \, (g s'^1 \varphi) \\
& = ((g \varphi \tilde s^1) (g s'^1 \varphi)^{- 1}) (\canonicalmonomorphism[\tilde E]|^{\Image \canonicalmonomorphism[\tilde E]})^{- 1} \canonicalmonomorphism[\tilde E] \, (g s'^1 \varphi) = (g \varphi \tilde s^1) (g s'^1 \varphi)^{- 1} (g s'^1 \varphi) = g \varphi \tilde s^1
\end{align*}
for \(g \in \Image \structuremorphism[E]\), that is, \(s^1 (\ModulePart \varphi) = (\GroupPart \varphi)|_{\Image \structuremorphism[E]}^{\Image \structuremorphism[\tilde E]} \tilde s^1\).

Conversely, given arbitrary sections \(s^1\) and \(s'^1\) of the underlying pointed map of \(\structuremorphism[E]|^{\Image \structuremorphism[E]}\) such that \(s^1\) fulfills \(s^1 (\ModulePart \varphi) = (\GroupPart \varphi)|_{\Image \structuremorphism[E]}^{\Image \structuremorphism[\tilde E]} \tilde s^1\), it follows that \(g s^1 \structuremorphism[E] = g = g s'^1 \structuremorphism[E]\) for all \(g \in \Image \structuremorphism[E]\), that is, \((g s^1) (g s'^1)^{- 1} \in \Kernel \structuremorphism[E] = \Image \canonicalmonomorphism[E]\) and hence
\begin{align*}
g s^1 & = (g s^1) (g s'^1)^{- 1} (g s'^1) = \big( (g s^1) (g s'^1)^{- 1} \big) (\canonicalmonomorphism[E]|^{\Image \canonicalmonomorphism[E]})^{- 1} \canonicalmonomorphism[E] \, (g s'^1) \\
& = \big( (g s^1) (g s'^1)^{- 1} \big) \varphi (\canonicalmonomorphism[\tilde E]|^{\Image \canonicalmonomorphism[\tilde E]})^{- 1} \canonicalmonomorphism[E] \, (g s'^1) = \big( (g s^1 \varphi) (g s'^1 \varphi)^{- 1} \big) (\canonicalmonomorphism[\tilde E]|^{\Image \canonicalmonomorphism[\tilde E]})^{- 1} \canonicalmonomorphism[E] \, (g s'^1) \\
& = \big( (g \varphi \tilde s^1) (g s'^1 \varphi)^{- 1} \big) (\canonicalmonomorphism[\tilde E]|^{\Image \canonicalmonomorphism[\tilde E]})^{- 1} \canonicalmonomorphism[E] \, (g s'^1)
\end{align*}
for all \(g \in \Image \structuremorphism[E]\). \qedhere
\end{enumerate}
\end{proof}

\begin{corollary} \label{cor:existence_of_compatible_choices_along_crossed_module_extensions}
We let \(E\) and \(\tilde E\) be crossed module extensions of \(G\) with \(M\) and we let \(\varphi\colon E \map \tilde E\) be an extension equivalence.
\begin{enumerate}
\item \label{cor:existence_of_compatible_choices_along_crossed_module_extensions:lifting_systems} There exists a lifting system \((Z^2, Z^1)\) for \(E\) and a lifting system \((\tilde Z^2, \tilde Z^1)\) for \(\tilde E\) with \(\tilde Z^1 = Z^1 (\GroupPart \varphi)\) and \(\tilde Z^2 = Z^2 (\ModulePart \varphi)\).
\item \label{cor:existence_of_compatible_choices_along_crossed_module_extensions:section_systems} There exists a section system \((s^1, s^0)\) for \(E\) and a section system \((\tilde s^1, \tilde s^0)\) for \(\tilde E\) with \(\tilde s^0 = s^0 (\GroupPart \varphi)\) and \(s^1 (\ModulePart \varphi) = (\GroupPart \varphi)|_{\Image \structuremorphism[E]}^{\Image \structuremorphism[\tilde E]} \tilde s^1\).
\end{enumerate}
\end{corollary}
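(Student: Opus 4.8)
The plan is to deduce both assertions directly from proposition~\ref{prop:construction_of_compatible_choices_along_crossed_module_extensions}; the only ingredient to add is the existence of the initial free choices, which is clear because a surjective pointed map always admits a pointed section (the underlying pointed maps of $\canonicalepimorphism[E]$ and of $\structuremorphism[E]|^{\Image \structuremorphism[E]}$ are surjective and send $1$ to $1$).

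For~\ref{cor:existence_of_compatible_choices_along_crossed_module_extensions:lifting_systems}, I would first note that a lifting system for $E$ exists at all: picking pointed sections $s^0$ of the underlying pointed map of $\canonicalepimorphism[E]$ and $s^1$ of that of $\structuremorphism[E]|^{\Image \structuremorphism[E]}$ gives a section system $(s^1, s^0)$ for $E$, which by remark~\ref{rem:section_systems_provide_lifting_systems} provides a lifting system $(Z^2, Z^1)$ for $E$. Given such a $(Z^2, Z^1)$, I would then set $\tilde Z^1 := Z^1 (\GroupPart \varphi)$ and $\tilde Z^2 := Z^2 (\ModulePart \varphi)$ and invoke proposition~\ref{prop:construction_of_compatible_choices_along_crossed_module_extensions}\ref{prop:construction_of_compatible_choices_along_crossed_module_extensions:lifting_systems}, which guarantees that $(\tilde Z^2, \tilde Z^1)$ is a lifting system for $\tilde E$. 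The required relations $\tilde Z^1 = Z^1 (\GroupPart \varphi)$ and $\tilde Z^2 = Z^2 (\ModulePart \varphi)$ then hold by construction.

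For~\ref{cor:existence_of_compatible_choices_along_crossed_module_extensions:section_systems}, the essential point is that the compatibility demanded for the module-part sections produces $s^1$ from $\tilde s^1$, so the free choices must be made on the correct sides. Concretely, I would choose a pointed section $s^0$ of the underlying pointed map of $\canonicalepimorphism[E]$ and put $\tilde s^0 := s^0 (\GroupPart \varphi)$; by proposition~\ref{prop:construction_of_compatible_choices_along_crossed_module_extensions}\ref{prop:construction_of_compatible_choices_along_crossed_module_extensions:section_of_the_canonical_epimorphism} this is a section of the underlying pointed map of $\canonicalepimorphism[\tilde E]$. Next I would choose an arbitrary section $\tilde s^1$ of the underlying pointed map of $\structuremorphism[\tilde E]|^{\Image \structuremorphism[\tilde E]}$ and let $s^1$ be the unique section of the underlying pointed map of $\structuremorphism[E]|^{\Image \structuremorphism[E]}$ furnished by proposition~\ref{prop:construction_of_compatible_choices_along_crossed_module_extensions}\ref{prop:construction_of_compatible_choices_along_crossed_module_extensions:sections_of_structure_morphism}, which by definition satisfies $s^1 (\ModulePart \varphi) = (\GroupPart \varphi)|_{\Image \structuremorphism[E]}^{\Image \structuremorphism[\tilde E]} \tilde s^1$. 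Then $(s^1, s^0)$ is a section system for $E$ and $(\tilde s^1, \tilde s^0)$ a section system for $\tilde E$ with the asserted compatibilities.

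Since the substantial work has already been done in proposition~\ref{prop:construction_of_compatible_choices_along_crossed_module_extensions}, I do not expect a genuine obstacle. The one thing to be careful about is the asymmetry in~\ref{cor:existence_of_compatible_choices_along_crossed_module_extensions:section_systems}: the group-part section is chosen freely on $E$ and pushed forward to $\tilde E$, whereas the module-part section must be chosen freely on $\tilde E$ and pulled back to $E$, so that the existence-and-uniqueness statement of proposition~\ref{prop:construction_of_compatible_choices_along_crossed_module_extensions}\ref{prop:construction_of_compatible_choices_along_crossed_module_extensions:sections_of_structure_morphism} is applied in the direction in which it is stated.
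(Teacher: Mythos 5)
Your proof is correct and follows essentially the same route as the paper: part~\ref{cor:existence_of_compatible_choices_along_crossed_module_extensions:lifting_systems} is deduced from proposition~\ref{prop:construction_of_compatible_choices_along_crossed_module_extensions}\ref{prop:construction_of_compatible_choices_along_crossed_module_extensions:lifting_systems}, and part~\ref{cor:existence_of_compatible_choices_along_crossed_module_extensions:section_systems} by choosing \(s^0\) on \(E\) and \(\tilde s^1\) on \(\tilde E\) and invoking parts~\ref{prop:construction_of_compatible_choices_along_crossed_module_extensions:section_of_the_canonical_epimorphism} and~\ref{prop:construction_of_compatible_choices_along_crossed_module_extensions:sections_of_structure_morphism} of that proposition, with exactly the asymmetry you point out. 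You are merely slightly more explicit than the paper in recording that an initial lifting system for \(E\) exists at all, which is a harmless (indeed welcome) addition.
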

\begin{proof} \
\begin{enumerate}
\item This follows from proposition~\ref{prop:construction_of_compatible_choices_along_crossed_module_extensions}\ref{prop:construction_of_compatible_choices_along_crossed_module_extensions:lifting_systems}.
\item We choose a section \(s^0\) of the underlying pointed map of \(\canonicalepimorphism[E]\) and a section \(\tilde s^1\) of the underlying pointed map of \(\structuremorphism[\tilde E]|^{\Image \structuremorphism[\tilde E]}\). By proposition~\ref{prop:construction_of_compatible_choices_along_crossed_module_extensions}\ref{prop:construction_of_compatible_choices_along_crossed_module_extensions:section_of_the_canonical_epimorphism}, \(\tilde s^0 := s_0 (\GroupPart \varphi)\) is a section of the underlying pointed map of \(\canonicalepimorphism[\tilde E]\), and by proposition~\ref{prop:construction_of_compatible_choices_along_crossed_module_extensions:sections_of_structure_morphism}, there exists a unique section \(s^1\) of the underlying pointed map of \(\structuremorphism[E]|^{\Image \structuremorphism[E]}\) such that \(s^1 (\ModulePart \varphi) = (\GroupPart \varphi)|_{\Image \structuremorphism[E]}^{\Image \structuremorphism[\tilde E]} \tilde s^1\). \qedhere
\end{enumerate}
\end{proof}

We remark that corollary~\ref{cor:existence_of_compatible_choices_along_crossed_module_extensions}\ref{cor:existence_of_compatible_choices_along_crossed_module_extensions:section_systems} will be used in~\cite[prop.~(4.11)]{thomas:2009:on_the_second_cohomology_group_of_a_simplicial_group}.

\begin{corollary} \label{cor:equal_cohomology_classes_for_equivalent_crossed_module_extensions}
The map \(\cohomologyclassofextension\colon \Extensions{2}(G, M) \map \CohomologyGroup[3]_{\text{cpt}}(G, M)\) induces a well-defined map
\[\cohomologyclassofextension\colon \ExtensionClasses{2}(G, M) \map \CohomologyGroup[3]_{\text{cpt}}(G, M), [E]_{\extensionequivalent} \mapsto \cohomologyclassofextension(E).\]
\end{corollary}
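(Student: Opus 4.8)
The plan is to show that the assignment $E \mapsto \cohomologyclassofextension(E)$ is constant on each $\extensionequivalent$-equivalence class, so that it factors through the quotient $\ExtensionClasses{2}(G, M) = \Extensions{2}(G, M) / {\extensionequivalent}$ by the universal property of quotients by an equivalence relation. Recall from definition~\ref{def:equivalence_of_crossed_module_extensions}\ref{def:equivalence_of_crossed_module_extensions:equivalence_and_equivalence_classes} that $\extensionequivalent$ is the equivalence relation \emph{generated} by the relation $R$ given by $E \mathrel{R} \tilde E$ if and only if there exists an extension equivalence $E \map \tilde E$. I would first observe that the binary relation on $\Extensions{2}(G, M)$ defined by $\cohomologyclassofextension(E) = \cohomologyclassofextension(\tilde E)$ is visibly reflexive, symmetric and transitive, hence itself an equivalence relation. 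Therefore, in order to conclude that $E \extensionequivalent \tilde E$ implies $\cohomologyclassofextension(E) = \cohomologyclassofextension(\tilde E)$, it suffices to check that $R$ is contained in this equivalence relation, that is, that a single extension equivalence already forces equality of the associated cohomology classes.

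For this I would suppose given an extension equivalence $\varphi\colon E \map \tilde E$. By corollary~\ref{cor:existence_of_compatible_choices_along_crossed_module_extensions}\ref{cor:existence_of_compatible_choices_along_crossed_module_extensions:lifting_systems} there exist a lifting system $(Z^2, Z^1)$ for $E$ and a lifting system $(\tilde Z^2, \tilde Z^1)$ for $\tilde E$ that are compatible with $\varphi$, in the sense that $\tilde Z^1 = Z^1 (\GroupPart \varphi)$ and $\tilde Z^2 = Z^2 (\ModulePart \varphi)$. For such compatible choices, proposition~\ref{prop:compatible_choices_along_crossed_module_extensions_provide_compatible_cocycles}\ref{prop:compatible_choices_along_crossed_module_extensions_provide_compatible_cocycles:lifting_systems} gives the equality of $3$-cocycles $\cocycleofextension{3}_{\tilde E, (\tilde Z^2, \tilde Z^1)} = \cocycleofextension{3}_{E, (Z^2, Z^1)}$ on the nose, and in particular $\cohomologyclassofextension(\tilde E) = \cohomologyclassofextension(E)$, where the values of $\cohomologyclassofextension$ are independent of the chosen lifting systems by proposition~\ref{prop:cohomology_class_associated_to_a_crossed_module_extension}. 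This establishes that $R$ is contained in the equivalence relation of equal cohomology classes.

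Combining the two observations, $\extensionequivalent$ is contained in $\{(E, \tilde E) \mid \cohomologyclassofextension(E) = \cohomologyclassofextension(\tilde E)\}$, so $\cohomologyclassofextension$ is constant on $\extensionequivalent$-classes and the induced map $[E]_{\extensionequivalent} \mapsto \cohomologyclassofextension(E)$ is well-defined. The one point worth flagging is conceptual rather than computational: since $\extensionequivalent$ is a \emph{generated} equivalence relation, two equivalent extensions need in general only be joined by a finite zigzag of extension equivalences that may point in either direction, so one must not expect a single $\varphi$ from $E$ to $\tilde E$. The reduction in the first paragraph is precisely what disposes of this difficulty: because equality of cohomology classes is symmetric and transitive, it propagates automatically along any such zigzag once it is known for each individual link, and the genuine work of comparing the $3$-cocycles has already been carried out in proposition~\ref{prop:compatible_choices_along_crossed_module_extensions_provide_compatible_cocycles}.
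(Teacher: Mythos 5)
Your proof is correct and takes essentially the same route as the paper's: a single extension equivalence yields compatible lifting systems by corollary~\ref{cor:existence_of_compatible_choices_along_crossed_module_extensions}\ref{cor:existence_of_compatible_choices_along_crossed_module_extensions:lifting_systems}, whence equal \(3\)-cocycles and equal cohomology classes by proposition~\ref{prop:compatible_choices_along_crossed_module_extensions_provide_compatible_cocycles}\ref{prop:compatible_choices_along_crossed_module_extensions_provide_compatible_cocycles:lifting_systems}. Your explicit reduction from the generated equivalence relation \(\extensionequivalent\) to its generating relation, via the observation that equality of cohomology classes is itself an equivalence relation, is a step the paper leaves implicit but is the same argument.
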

\begin{proof}
We suppose given crossed module extensions \(E\) and \(\tilde E\) of \(G\) with \(M\) and an extension equivalence \(\varphi\colon E \map \tilde E\). Then by corollary~\ref{cor:existence_of_compatible_choices_along_crossed_module_extensions}\ref{cor:existence_of_compatible_choices_along_crossed_module_extensions:lifting_systems}, there exists a lifting system \((Z^2, Z^1)\) for \(E\) and a lifting system \((\tilde Z^2, \tilde Z^1)\) for \(\tilde E\) with \(\tilde Z^1 = Z^1 (\GroupPart \varphi)\) and \(\tilde Z^2 = Z^2 (\ModulePart \varphi)\) and proposition~\ref{prop:compatible_choices_along_crossed_module_extensions_provide_compatible_cocycles}\ref{prop:compatible_choices_along_crossed_module_extensions_provide_compatible_cocycles:lifting_systems} implies \(\cohomologyclassofextension(\tilde E) = \cohomologyclassofextension(E)\).
\end{proof}

\begin{definition}[cohomology class associated to a crossed module extension class] \label{def:cohomology_class_associated_to_a_crossed_module_extension_class} \
Given a crossed module extension \(E\) of \(G\) with \(M\), the \(3\)-cohomology class \(\cohomologyclassofextension([E]_{\extensionequivalent}) = \cohomologyclassofextension(E) \in \CohomologyGroup[3]_{\text{cpt}}(G, M)\) is also called the \newnotion{cohomology class associated to \([E]_{\extensionequivalent}\)}.
\end{definition}

\begin{proposition} \label{prop:every_3-cocycle_of_the_cohomology_class_of_an_crossed_module_extension_is_constructable}
We suppose given a crossed module extension \(E\) of \(G\) with \(M\) and a \(3\)-cocycle \(z^3 \in \CocycleGroup[3]_{\text{cpt}}(G, M)\) with \(\cohomologyclassofextension(E) = z^3 + \CoboundaryGroup[3]_{\text{cpt}}(G, M)\). For every lift \(Z^1\) of \(\id_G\) along the underlying pointed map of \(\canonicalepimorphism\), there exists a map \(Z^2\colon G \cart G \map \ModulePart E\) such that \((Z^2, Z^1)\) is a lifting system for \(E\) with \(\cocycleofextension{3}_{E, (Z^2, Z^1)} = z^3\).
\end{proposition}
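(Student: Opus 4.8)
The plan is to start from any lifting system over the prescribed \(Z^1\), to observe that its associated \(3\)\nbd cocycle represents the same componentwise pointed cohomology class as \(z^3\), and then to adjust it by a componentwise pointed \(2\)\nbd coboundary so as to land exactly on \(z^3\); the adjustment is controlled entirely by proposition~\ref{prop:choices_of_liftings_of_the_non-abelian_2-cocycle_correspond_to_3-cocycles}. First I would produce an auxiliary lifting system over the given \(Z^1\). Since \(Z^1\) is a lift of \(\id_G\) along the underlying pointed map of \(\canonicalepimorphism\), it is a section of \(\canonicalepimorphism\); choosing any section \(s^1\) of the underlying pointed map of \(\structuremorphism|^{\Image \structuremorphism}\) then yields a section system \((s^1, Z^1)\) for \(E\), and remark~\ref{rem:section_systems_provide_lifting_systems} turns it into a lifting system \((Z_0^2, Z^1)\) for \(E\) with \(Z_0^2 := \cocycleofextension{2}_{E, Z^1} s^1\). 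The only fact I need from this step is that some lifting system over the given \(Z^1\) exists.

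Next I would compare cohomology classes. By proposition~\ref{prop:cohomology_class_associated_to_a_crossed_module_extension}, the associated class is independent of the chosen lifting system, so \(\cocycleofextension{3}_{E, (Z_0^2, Z^1)} + \CoboundaryGroup[3]_{\text{cpt}}(G, M) = \cohomologyclassofextension(E) = z^3 + \CoboundaryGroup[3]_{\text{cpt}}(G, M)\), whence the difference \(z^3 - \cocycleofextension{3}_{E, (Z_0^2, Z^1)}\) lies in \(\CoboundaryGroup[3]_{\text{cpt}}(G, M)\). This is the point at which componentwise pointedness pays off: by corollary~\ref{cor:descriptions_of_componentwise_pointed_cocycle_coboundary_and_cohomology_group}\ref{cor:descriptions_of_componentwise_pointed_cocycle_coboundary_and_cohomology_group:coboundaries} one has \(\CoboundaryGroup[3]_{\text{cpt}}(G, M) = (\CochainComplex[2]_{\text{cpt}}(G, M)) \differential\), so I may choose a \emph{componentwise pointed} \(2\)\nbd cochain \(c^2 \in \CochainComplex[2]_{\text{cpt}}(G, M)\) with \(z^3 - \cocycleofextension{3}_{E, (Z_0^2, Z^1)} = c^2 \differential\).

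Finally I would apply the modification result with this \(c^2\). Setting \((h, g) Z^2 := (h, g) c^2 \canonicalmonomorphism (h, g) Z_0^2\) for \(g, h \in G\), part~\ref{prop:choices_of_liftings_of_the_non-abelian_2-cocycle_correspond_to_3-cocycles:correspondence_liftings_and_2-cochains} of proposition~\ref{prop:choices_of_liftings_of_the_non-abelian_2-cocycle_correspond_to_3-cocycles} shows that \((Z^2, Z^1)\) is again a lifting system for \(E\) — this uses that \(c^2\) is componentwise pointed — and part~\ref{prop:choices_of_liftings_of_the_non-abelian_2-cocycle_correspond_to_3-cocycles:3-cocycle_of_lifting} gives \(\cocycleofextension{3}_{E, (Z^2, Z^1)} = c^2 \differential + \cocycleofextension{3}_{E, (Z_0^2, Z^1)} = z^3\), which is exactly the assertion. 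I do not expect any genuinely hard computation, since the cocycle bookkeeping has already been carried out in the preceding propositions. The one step that must not be glossed over is the requirement that the coboundary \(z^3 - \cocycleofextension{3}_{E, (Z_0^2, Z^1)}\) be represented by a componentwise pointed \(2\)\nbd cochain and not merely an arbitrary one; otherwise the modified \(Z^2\) would fail to be componentwise pointed and \((Z^2, Z^1)\) would not be a lifting system. This is precisely what corollary~\ref{cor:descriptions_of_componentwise_pointed_cocycle_coboundary_and_cohomology_group}\ref{cor:descriptions_of_componentwise_pointed_cocycle_coboundary_and_cohomology_group:coboundaries} supplies, and it is the reason the construction is carried out in the pointed framework.
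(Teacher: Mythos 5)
Your proposal is correct and follows essentially the same route as the paper's proof: choose an arbitrary lifting system over the given \(Z^1\), use the well-definedness of \(\cohomologyclassofextension\) to conclude that the two \(3\)-cocycles differ by a coboundary of a \emph{componentwise pointed} \(2\)-cochain (via corollary~\ref{cor:descriptions_of_componentwise_pointed_cocycle_coboundary_and_cohomology_group}\ref{cor:descriptions_of_componentwise_pointed_cocycle_coboundary_and_cohomology_group:coboundaries}), and then correct \(Z^2\) using proposition~\ref{prop:choices_of_liftings_of_the_non-abelian_2-cocycle_correspond_to_3-cocycles}. The only differences are cosmetic — a sign convention (you add \(c^2 \canonicalmonomorphism\) where the paper multiplies by its inverse, with \(c^2\) chosen with the opposite sign) — and you make explicit two points the paper leaves implicit, namely the existence of an initial lifting system over the prescribed \(Z^1\) and the necessity of representing the coboundary by a componentwise pointed cochain.
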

\begin{proof}
We suppose given a lift \(Z^1\) of \(\id_G\) along the underlying pointed map of \(\canonicalepimorphism\) and we choose an arbitrary componentwise pointed map \(\tilde Z^2\colon G \cart G \map \ModulePart E\) such that \((\tilde Z^2, Z^1)\) is a lifting system for \(E\), that is, with \(\tilde Z^2 (\structuremorphism[E]|^{\Image \structuremorphism[E]}) = \cocycleofextension{2}_{E, Z^1}\). Then we have
\[\cocycleofextension{3}_{E, (\tilde Z^2, Z^1)} + \CoboundaryGroup[3]_{\text{cpt}}(G, M) = \cohomologyclassofextension(E) = z^3 + \CoboundaryGroup[3]_{\text{cpt}}(G, M),\]
that is, there exists a componentwise pointed \(2\)-cochain \(c^2 \in \CochainComplex[2]_{\text{cpt}}(G, M)\) with \(\cocycleofextension{3}_{E, (\tilde Z^2, Z^1)} = (c^2 \differential) + z^3\). Setting \((h, g) Z^2 := ((h, g) c^2 \canonicalmonomorphism[E])^{- 1} (h, g) \tilde Z^2\), proposition~\ref{prop:choices_of_liftings_of_the_non-abelian_2-cocycle_correspond_to_3-cocycles} implies that \((Z^2, Z^1)\) is a lifting system for \(E\) with \(\cocycleofextension{3}_{E, (Z^2, Z^1)} = z^3\).
\end{proof}

We let \(E\) be a crossed module extension of \(G\) with \(M\) and we let \(z^3 \in \CocycleGroup[3]_{\text{cpt}}(G, M)\) be given with \(\cohomologyclassofextension(E) = z^3 + \CoboundaryGroup[3]_{\text{cpt}}(G, M)\). By the preceding proposition, every lift \(Z^1\) of \(\id_G\) along the underlying pointed map of \(\canonicalepimorphism\) can be completed to a lifting system \((Z^2, Z^1)\) for \(E\) such that \(z^3\) is the \(3\)-cocycle of \(E\) with respect to \((Z^2, Z^1)\). A lift of \(\id_G\) along the underlying pointed map of \(\canonicalepimorphism\) is nothing but a section of the underlying pointed map of \(\canonicalepimorphism\). The following example shows that in general not every section \(s^0\) of the underlying pointed map of \(\canonicalepimorphism\) can be completed to a section system \((s^1, s^0)\) for \(E\) such that \(\cocycleofextension{3}\) is the \(3\)-cocycle of \(E\) with respect to \((s^1, s^0)\).

\begin{example} \label{ex:non-trivial_coboundary_is_not_3-cocycle_of_trivial_extension_with_respect_to_a_section_system}
We suppose that \(G\), \(M\) and the action of \(G\) on \(M\) are non-trivial. Then the coboundary group \(\CoboundaryGroup[3]_{\text{cpt}}(G, M)\) has also at least one non-trivial element: We choose \(g_0 \in G \setminus \{1\}\) and \(m_0 \in M \setminus \{1\}\) such that \(g_0 m_0 \neq m_0\), and we define \(c^2 \in \CochainComplex[2]_{\text{cpt}}(G, M)\) by \((h, g) c^2 := \kdelta_{(h, g), (g_0, g_0)} m\) for \(g, h \in G\). This leads to
\[(g_0, g_0, g_0) (c^2 \differential) = (g_0, g_0) c^2 - (g_0, g_0^2) c^2 + (g_0^2, g_0) c^2 - g_0 \act (g_0, g_0) c^2 = m_0 - g_0 m_0 \neq 0.\]
By example~\ref{ex:3-cocycle_of_the_trivial_crossed_module_extension_with_respect_to_the_unique_section_system}, the \(3\)-cocycle of the trivial crossed module extension \(\TrivialHomomorphismCrossedModule{M}{G}\) with respect to the unique section system for \(\TrivialHomomorphismCrossedModule{M}{G}\) is the trivial cocycle \(0 \in \CocycleGroup[3]_{\text{cpt}}(G, M)\). Thus there cannot be a section system for \(\TrivialHomomorphismCrossedModule{M}{G}\) leading to \(c^2 \differential\).
\end{example}

\section{The standard extension} \label{sec:the_standard_extension}

In this section, we construct for a given cocycle \(z^3 \in \CocycleGroup[3]_{\text{cpt}}(G, M)\), where \(G\) is a group and \(M\) is an abelian \(G\)-module, a crossed module extension of \(G\) with \(M\) whose associated cohomology class is \(z^3 + \CoboundaryGroup[3]_{\text{cpt}}(G, M)\). The arguments are adapted from~\cite[sec.~7]{maclane:1949:cohomology_theory_in_abstract_groups_III_operator_homomorphisms_of_kernels},~\cite[sec.~9]{eilenberg_maclane:1947:cohomology_theory_in_abstract_groups_II_group_extensions_with_a_non-abelian_kernel}. At the very end, this leads to the classification theorem~\ref{th:bijection_between_crossed_module_extension_classes_and_third_componentwise_pointed cohomology_group} due to \eigenname{Holt}~\cite[th.~4.5]{holt:1979:an_interpretation_of_the_cohomology_groups_h_n_g_m}, \eigenname{Huebschmann}~\cite[p.~310]{huebschmann:1980:crossed_n-fold_extensions_of_groups_and_cohomology} and \eigenname{Ratcliffe}~\cite[th.~9.4]{ratcliffe:1980:crossed_extensions}.

Throughout the whole section, we suppose given a group \(G\) and an abelian \(G\)-module \(M\).

Before we begin with the construction of the standard extension, we recall two facts from group theory.

\begin{remark} \label{rem:criterion_when_two_group_homomorphisms_with_common_target_induce_a_group_homomorphism_from_the_direct_product}
We suppose given group homomorphisms \(\varphi_1\colon G_1 \map H\) and \(\varphi_2\colon G_2 \map H\). The map \linebreak \(\varphi\colon G_1 \directprod G_2 \map H, (g_1, g_2) \map (g_1 \varphi_1) (g_2 \varphi_2)\) is a group homomorphism if and only if
\[(g_1 \varphi_1) (g_2 \varphi_2) = (g_2 \varphi_2) (g_1 \varphi_1)\]
for all \(g_1 \in G_1\), \(g_2 \in G_2\).
\end{remark}
\begin{proof}
If \(\varphi\) is a group homomorphism, then we necessarily have
\[(g_1 \varphi_1) (g_2 \varphi_2) = (g_1, g_2) \varphi = ((1, g_2) (g_1, 1)) \varphi = (1, g_2) \varphi (g_1, 1) \varphi = (1 \varphi_1) (g_2 \varphi_2) (g_1 \varphi_1) (1 \varphi_2) = (g_2 \varphi_2) (g_1 \varphi_1)\]
for all \(g_1 \in G_1\), \(g_2 \in G_2\). Conversely, if \((g_1 \varphi_1) (g_2 \varphi_2) = (g_2 \varphi_2) (g_1 \varphi_1)\) for all \(g_1 \in G_1\), \(g_2 \in G_2\), then we obtain
\begin{align*}
((g_1, g_2) (g_1', g_2')) \varphi & = (g_1 g_1', g_2 g_2') \varphi = (g_1 g_1') \varphi_1 (g_2 g_2') \varphi_2 = (g_1 \varphi_1) (g_1' \varphi_1) (g_2 \varphi_2) (g_2' \varphi_2) \\
& = (g_1 \varphi_1) (g_2 \varphi_2) (g_1' \varphi_1) (g_2' \varphi_2) = (g_1, g_2) \varphi (g_1', g_2') \varphi
\end{align*}
for all \(g_1, g_1' \in G_1\), \(g_2, g_2' \in G_2\), that is, \(\varphi\) is a group homomorphism.
\end{proof}

\begin{remark} \label{rem:generators_of_the_image_group_of_the_standard_extension}
We let \(F\) be a free group on the underlying pointed set of \(G\) with basis \(s^0\colon G \map F\), and we let \(\pi\) be the extension of \(\id_G\colon G \map G\) to \(F\).
\begin{enumerate}
\item \label{rem:generators_of_the_image_group_of_the_standard_extension:free_group} The kernel \(\Kernel \pi\) is a free group on the pointed set \(\Image \cocycleofextension{2}_{F, s^0}\).
\item \label{rem:generators_of_the_image_group_of_the_standard_extension:point} Given \(g, h \in G\), we have \((h, g) \cocycleofextension{2}_{F, s^0} = 1\) if and only if \(g = 1\) or \(h = 1\).
\item \label{rem:generators_of_the_image_group_of_the_standard_extension:non-points} Given \(g, g', h, h' \in G \setminus \{1\}\) with \((h, g) \cocycleofextension{2}_{F, s^0} = (h', g') \cocycleofextension{2}_{F, s^0}\), it follows that \((h, g) = (h', g')\).
\end{enumerate}
\end{remark}
\begin{proof} \
\begin{enumerate}
\item This follows from the Nielsen-Schreier theorem, see for example~\cite[\S 36, p.~36]{kurosh:1960:the_theory_of_groups} (cf.\ also~\cite[sec.~7, p.~747]{maclane:1949:cohomology_theory_in_abstract_groups_III_operator_homomorphisms_of_kernels}).
\item For \(g, h \in G\), we have \((h, g) \cocycleofextension{2} = 1\) if and only if \((h s^0) (g s^0) = (h g) s^0\). But since \(F\) is freely generated on the underlying pointed set of \(G\) and the length of \((h g) s^0\) for \(g, h \in G\) is less than \(2\), this is equivalent to \(g = 1\) or \(h = 1\).
\item We suppose given \(g, g', h, h' \in G \setminus \{1\}\) with \((h, g) \cocycleofextension{2} = (h', g') \cocycleofextension{2}\). It follows that
\[(g' s^0)^{- 1} (h' s^0)^{- 1} (h s^0) (g s^0) = ((h' g') s^0)^{- 1} (h g) s^0\]
and hence the length of this element is less or equal than \(2\). But since \(g, g', h, h' \neq 1\), this implies that \((h' s^0)^{- 1} (h s^0) = 1\), that is, \(h s^0 = h' s^0\) and therefore \(h = h'\). Thus we have
\[(h g') s^0 (g' s^0)^{- 1} (g s^0) = (h g) s^0\]
and hence the length of \((h g') s^0 (g' s^0)^{- 1} (g s^0)\) is less or equal than \(1\). But now \(g, g' \neq 1\) and \(h g' \neq g'\) implies \(g' = g\). \qedhere
\end{enumerate}
\end{proof}

\begin{proposition} \label{prop:standard_extension_of_a_3-cocycle}
We suppose given a \(3\)-cocycle \(z^3 \in \CocycleGroup[3]_{\text{cpt}}(G, M)\).

We let \(F\) be a free group on the underlying pointed set of \(G\) with basis \(s^0 = Z^1\colon G \map F\). We let \(\pi\) be the extension of \(\id_G\colon G \map G\) to \(F\). The basis \(s^0\) is a section of the underlying pointed map of \(\pi\). We let \(\iota\colon M \map M \directprod \Kernel \pi, m \mapsto (m, 1)\) and \(\mu\colon M \directprod \Kernel \pi \map F, (m, r) \mapsto r\). We let \(s^1\colon \Kernel \pi \map M \directprod \Kernel \pi, r \mapsto (1, r)\) and we let \(Z^2\colon G \cart G \map M \directprod \Kernel \pi\) be given by \(Z^2 := \cocycleofextension{2}_{F, s^0} s^1\). The direct product \(M \directprod \Kernel \pi\) is generated by \(\Image \iota \union \Image Z^2\) and carries the structure of an \(F\)-module uniquely determined on this set of generators by \({^{k Z^1}}(m \iota) := (k m) \iota\) for \(m \in M\), \(k \in G\), and \({^{k Z^1}}((h, g) Z^2) := ((k, h, g) z^3 \iota)^{- 1} ({^{k Z^1}}((h, g) \cocycleofextension{2})) s^1 = ((k, h, g) z^3 \iota)^{- 1} (k, h) Z^2 (k h, g) Z^2 ((k, h g) Z^2)^{- 1}\) for \(g, h, k \in G\).

These data define a crossed module extension \(\StandardExtension(z^3)\) and a section system \((\standardsectionsystem_{z^3}^1, \standardsectionsystem_{z^3}^0)\) for \(\StandardExtension(z^3)\) as follows. The group part of \(\StandardExtension(z^3)\) is given by \(\GroupPart \StandardExtension(z^3) := F\), the module part is given by \(\ModulePart \StandardExtension(z^3) := M \directprod \Kernel \pi\) and the structure morphism is given by \(\structuremorphism[\StandardExtension(z^3)] := \mu\). We have the canonical monomorphism \(\canonicalmonomorphism[\StandardExtension(z^3)] := \iota\) and the canonical epimorphism \(\canonicalepimorphism[\StandardExtension(z^3)] := \pi\). The section system \((\standardsectionsystem_{z^3}^1, \standardsectionsystem_{z^3}^0)\) is defined by \(\standardsectionsystem_{z^3}^0 := s^0\) and \(\standardsectionsystem_{z^3}^1 := s^1\).

By construction, the \(3\)-cocycle of \(\StandardExtension(z^3)\) with respect to the section system \((\standardsectionsystem_{z^3}^1, \standardsectionsystem_{z^3}^0)\) is
\[\cocycleofextension{3}_{\StandardExtension(z^3), (\standardsectionsystem_{z^3}^1, \standardsectionsystem_{z^3}^0)} = z^3.\]
In particular, \(\cohomologyclassofextension(\StandardExtension(z^3)) = z^3 + \CoboundaryGroup[3]_{\text{cpt}}(G, M)\).
\end{proposition}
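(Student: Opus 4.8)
The plan is to verify, in turn, the five assertions: that $M \directprod \Kernel \pi$ really becomes an $F$-module via the stated formulas, that $\mu$ together with this action satisfies the crossed module axioms, that the four-term sequence is exact with the prescribed $G$-action (so that $\StandardExtension(z^3)$ is a crossed module extension in the sense of definition~\ref{def:crossed_module_extension}), that $(\standardsectionsystem_{z^3}^1, \standardsectionsystem_{z^3}^0) = (s^1, s^0)$ is a section system, and finally that the resulting $3$-cocycle is $z^3$. I expect everything except one crossed module axiom to be routine, and the cocycle hypothesis on $z^3$ to be used at exactly one point.

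First I would justify the $F$-module structure. Since $\Kernel \pi$ is a free group on the pointed set $\Image \cocycleofextension{2}_{F, s^0}$ by remark~\ref{rem:generators_of_the_image_group_of_the_standard_extension}\ref{rem:generators_of_the_image_group_of_the_standard_extension:free_group}, whose free generators are indexed bijectively by the pairs $(h, g)$ with $h, g \neq 1$ by parts~\ref{rem:generators_of_the_image_group_of_the_standard_extension:point} and~\ref{rem:generators_of_the_image_group_of_the_standard_extension:non-points}, for each $k \in G$ the prescribed images determine a unique homomorphism $\Kernel \pi \map M \directprod \Kernel \pi$; together with the homomorphism $m \mapsto (k m) \iota$ on $\Image \iota$, whose image is central, remark~\ref{rem:criterion_when_two_group_homomorphisms_with_common_target_induce_a_group_homomorphism_from_the_direct_product} assembles these into a well-defined endomorphism $\beta_k$ of $M \directprod \Kernel \pi$. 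Each $\beta_k$ preserves the central subgroup $\Image \iota$, acting there as the automorphism $m \iota \mapsto (k m) \iota$, and induces on the quotient $M \directprod \Kernel \pi / \Image \iota \isomorphic \Kernel \pi$ the conjugation automorphism by $k s^0$; hence $\beta_k$ is itself an automorphism. By the universal property of the free group $F$ on the pointed set underlying $G$, the assignment $k \mapsto \beta_k$ extends uniquely to a homomorphism $F^\op \map \AutomorphismGroup(M \directprod \Kernel \pi)$, i.e.\ to the desired action. Note that the cocycle property of $z^3$ is not needed anywhere in this step.

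The crossed module axioms are next. Equivariance $({^w}x) \mu = {^w}(x \mu)$ I would verify by checking it on the generators $k s^0$ of $F$ and the generators $\Image \iota \union \Image Z^2$ of $M \directprod \Kernel \pi$ (the set of valid $w$ being a subgroup), using $(h, g) Z^2 \mu = (h, g) \cocycleofextension{2}$ and the fact that the $\Kernel \pi$-component of ${^{k s^0}}((h, g) Z^2)$ is ${^{k s^0}}((h, g) \cocycleofextension{2})$. The Peiffer identity is the crux and the main obstacle. Writing $p \colon M \directprod \Kernel \pi \map \Kernel \pi$ for the projection, one first shows that $p$ is $F$-equivariant for the conjugation action on $\Kernel \pi$ (again a generator check). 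For $n = (h, g) Z^2$ with $n \mu = r := (h, g) \cocycleofextension{2} \in \Kernel \pi$, equivariance of $p$ and centrality of $\Image \iota$ show that ${^r}x$ and the conjugate ${^n}x$ have the same $\Kernel \pi$-component, so they differ by an element of $\Image \iota$; the Peiffer identity thus reduces to the claim that the $F$-action of $r \in \Kernel \pi$ fixes the $M$-component of every $x$, i.e.\ $q({^r}x) = q(x)$ for the projection $q \colon M \directprod \Kernel \pi \map M$. This in turn reduces to $q({^r}((h', g') Z^2)) = 0$, and it is precisely here that the $3$-cocycle identity for $z^3$ enters, via the computation inverse to the one in the proof of remark~\ref{rem:lifting_systems_of_crossed_module_extensions_lead_to_3-cocycles}: expanding $r = (h s^0)(g s^0)((h g) s^0)^{-1}$ and tracking the accumulated $M$-components forces them to cancel exactly when $z^3$ satisfies the cocycle relation.

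With the crossed module in hand the remaining points are short. Exactness of $M \morphism[\iota] M \directprod \Kernel \pi \morphism[\mu] F \morphism[\pi] G$ is immediate from the definitions of $\iota$, $\mu$, $\pi$ (with $\Kernel \mu = \Image \iota$, $\Image \mu = \Kernel \pi$ and $\pi$ an epimorphism), and the induced $G$-action agrees with the given one because ${^{k s^0}}(m \iota) = (k m) \iota = ((k s^0 \pi) m) \iota$ on generators; thus $\StandardExtension(z^3)$ is a crossed module extension. That $(s^1, s^0)$ is a section system follows since $s^0$ is the basis, hence $s^0 \pi = \id_G$, and $s^1 \mu = \id_{\Kernel \pi}$. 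Finally, the lifting system coming from $(s^1, s^0)$ is exactly $(Z^2, Z^1) = (\cocycleofextension{2} s^1, s^0)$, and plugging into the defining formula of definition~\ref{def:3-cocycle_of_a_crossed_module_extension} I would compute, using $(k, h) Z^2 (k h, g) Z^2 ((k, h g) Z^2)^{-1} = ({^{k s^0}}((h, g) \cocycleofextension{2})) s^1$ (the non-abelian $2$-cocycle identity of remark~\ref{rem:sections_of_group_epimorphisms_lead_to_non-abelian_2-cocycles}) together with the defining formula for ${^{k Z^1}}((h, g) Z^2)$, that the bracketed product collapses to $(k, h, g) z^3 \iota$; applying $(\iota|^{\Image \iota})^{-1}$ yields $\cocycleofextension{3}_{\StandardExtension(z^3), (s^1, s^0)} = z^3$. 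The last assertion then follows from definition~\ref{def:cohomology_class_associated_to_a_crossed_module_extension}.
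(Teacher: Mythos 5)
Your proof is correct and follows the paper's overall architecture --- build the \(F\)-action on \(M \directprod \Kernel \pi\) from the universal properties of the free groups \(F\) and \(\Kernel \pi\), verify (Equi) and (Peif) on generators, and read off the \(3\)-cocycle from the defining formula of the action --- but it replaces the paper's two longest computations by more structural arguments. First, to see that each \(k a\colon M \directprod \Kernel \pi \map M \directprod \Kernel \pi\) is an automorphism, the paper constructs an explicit candidate inverse \(k b\) and verifies \((k a)(k b) = (k b)(k a) = \id\) by a lengthy computation that itself invokes the cocycle identity for \(z^3\); your observation that \(k a\) restricts to an automorphism of the central subgroup \(\Image \iota\) and induces on the quotient \(\Kernel \pi\) the conjugation automorphism by \(k s^0\) (both maps agreeing on the free generators \((h, g) \cocycleofextension{2}\)), so that the short five lemma applies, is shorter and correctly isolates the fact that no cocycle condition is needed at this stage. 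Second, your treatment of the Peiffer identity via the two projections \(p\) and \(q\) --- matching the \(\Kernel \pi\)-components by \(F\)-equivariance of \(p\), noting that conjugation by \(n = (h,g)Z^2\) fixes \(M\)-components since \(\Image \iota\) is central, and reducing the remaining claim \(q({}^{r}x) = q(x)\) to the generators \(r = (h, g)\cocycleofextension{2}\) of \(\Kernel \pi\) and the generators \(x\) of \(M \directprod \Kernel \pi\) --- is a genuine reorganization, though the key computation it ultimately requires (comparing the \(M\)-components of \({}^{(h s^0)(g s^0)}((h', g') Z^2)\) and \({}^{(h g) s^0}((h', g') Z^2)\) and cancelling via the \(3\)-cocycle identity) is exactly the computation the paper performs when it shows \({}^{(l Z^1)(k Z^1)}((h, g) Z^2) = {}^{(l, k) Z^2}({}^{(l k) Z^1}((h, g) Z^2))\), so the gain here is organizational rather than substantive. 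The remaining points (exactness, the induced \(G\)-action on \(M\), the section system, and the evaluation of \(\cocycleofextension{3}_{\StandardExtension(z^3), (\standardsectionsystem_{z^3}^1, \standardsectionsystem_{z^3}^0)}\)) agree with the paper's proof.
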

\begin{proof}
Since the kernel \(\Kernel \pi\) is a free group on the pointed set \(\Image \cocycleofextension{2} = \{(h Z^1) (g Z^1) ((h g) Z^1)^{- 1} \mid g, h \in G\}\) by remark~\ref{rem:generators_of_the_image_group_of_the_standard_extension}\ref{rem:generators_of_the_image_group_of_the_standard_extension:free_group}, the direct product \(M \directprod \Kernel \pi\) is generated by \(\Image \iota \union \Image Z^2\). By definition of \(\iota\), \(\mu\) and \(\pi\), we have an exact sequence of groups
\[M \morphism[\iota] M \directprod \Kernel \pi \morphism[\mu] F \morphism[\pi] G\]
with \(\iota\) injective and \(\pi\) surjective. The pointed map \(s^1\) is a section of the underlying pointed map of \(\mu|^{\Image \mu}\) and hence \(Z^2\) is a lift of \(\cocycleofextension{2}\) along the underlying pointed map of \(\mu|^{\Image \mu}\).

We want to define an action of \(F\) on \(M \directprod \Kernel \pi\), that is, a group homomorphism \(\alpha\colon F^\op \map \AutomorphismGroup(M \directprod \Kernel \pi)\). Since \(F\) is a free group on the underlying pointed set of \(G\), it suffices to define a pointed map \(a\colon G \map \AutomorphismGroup(M \directprod \Kernel \pi)\). So we suppose given a group element \(k \in G\). Since \(M\) is a \(G\)-module, we have a group automorphism \(M \map M\), % manual format
\(m \mapsto k m\) and hence a group homomorphism \(M \map M \directprod \Kernel \pi, m \mapsto (k m) \iota\). Moreover, since \(\Kernel \pi\) is a free group on the pointed set \(\Image \cocycleofextension{2}\), the pointed map \(\Kernel \pi \map M \directprod \Kernel \pi, (h, g) \cocycleofextension{2} \mapsto ((k, h, g) z^3 \iota)^{- 1} ({^{k Z^1}}((h, g) \cocycleofextension{2})) s^1\), which is well-defined by remark~\ref{rem:generators_of_the_image_group_of_the_standard_extension}, extends to a group homomorphism \(\Kernel \pi \map M \directprod \Kernel \pi\). By remark~\ref{rem:criterion_when_two_group_homomorphisms_with_common_target_induce_a_group_homomorphism_from_the_direct_product}, we obtain a group homomorphism \(k a\colon M \directprod \Kernel \pi \map M \directprod \Kernel \pi\), which is given on the elements of \(\Image \iota \union \Image Z^2\) by \(m \iota (k a) = (k m) \iota\) for \(m \in M\) resp.\ by \((h, g) Z^2 (k a) = ((k, h, g) z^3 \iota)^{- 1} ({^{k Z^1}}((h, g) \cocycleofextension{2})) s^1\) for \(g, h \in G\). Now remark~\ref{rem:sections_of_group_epimorphisms_lead_to_non-abelian_2-cocycles} implies that
\begin{align*}
(h, g) Z^2 (k a) & = ((k, h, g) z^3 \iota)^{- 1} ({^{k Z^1}}((h, g) \cocycleofextension{2})) s^1 = ((k, h, g) z^3 \iota)^{- 1} ((k, h) \cocycleofextension{2} (k h, g) \cocycleofextension{2} ((k, h g) \cocycleofextension{2})^{- 1}) s^1 \\
& = ((k, h, g) z^3 \iota)^{- 1} (k, h) Z^2 (k h, g) Z^2 ((k, h g) Z^2)^{- 1}
\end{align*}
for all \(g, h \in G\). To show that \(k a\) is an automorphism on \(M \directprod \Kernel \pi\), we show that the group homomorphism \(k b\colon M \directprod \Kernel \pi \map M \directprod \Kernel \pi\), which is defined on the elements of \(\Image \iota \union \Image Z^2\) by \(m \iota (k b) := (k^{- 1} m) \iota\) for \(m \in M\) resp.\ \((h, g) Z^2 (k b) := ((k^{- 1}, h, g) z^3 \iota)^{- 1} ({^{(k Z^1)^{- 1}}}((h, g) \cocycleofextension{2})) s^1\), is inverse to \(k a\). (The proof that \(k b\) exists and is uniquely determined on \(\Image \iota \union \Image Z^2\) is done in the same way as that of \(k a\).) Indeed, we have
\[m \iota (k a) (k b) = (k m) \iota (k b) = (k^{- 1} k m) \iota = m \iota\]
and
\[m \iota (k b) (k a) = (k^{- 1} m) \iota (k a) = (k k^{- 1} m) \iota = m \iota\]
for all \(m \in M\) as well as
\begin{align*}
& (h, g) Z^2 (k a) (k b) = \big( ((k, h, g) z^3 \iota)^{- 1} (k, h) Z^2 (k h, g) Z^2 ((k, h g) Z^2)^{- 1} \big) (k b) \\
& = \big( (k, h, g) z^3 \iota (k b) \big)^{- 1} \big( (k, h) Z^2 (k b) \big) \big( (k h, g) Z^2 (k b) \big) \big( (k, h g) Z^2 (k b) \big)^{- 1} \\
& = \big( (k^{- 1} (k, h, g) z^3) \iota \big)^{- 1} \big( ((k^{- 1}, k, h) z^3 \iota)^{- 1} ({^{(k Z^1)^{- 1}}}((k, h) \cocycleofextension{2})) s^1 \big) \big( ((k^{- 1}, k h, g) z^3 \iota)^{- 1} ({^{(k Z^1)^{- 1}}}((k h, g) \cocycleofextension{2})) s^1 \big) \\
& \qquad \big( ((k^{- 1}, k, h g) z^3 \iota)^{- 1} ({^{(k Z^1)^{- 1}}}((k, h g) \cocycleofextension{2})) s^1 \big)^{- 1} \\
& = \big( (k^{- 1}, k, h) z^3 \iota \big)^{- 1} \big( (k^{- 1}, k, h g) z^3 \iota \big) \big( (k^{- 1}, k h, g) z^3 \iota \big)^{- 1} \big( (k^{- 1} (k, h, g) z^3) \iota \big)^{- 1} \big( ({^{(k Z^1)^{- 1}}}((k, h) \cocycleofextension{2})) s^1 \big) \\
& \qquad \big( ({^{(k Z^1)^{- 1}}}((k h, g) \cocycleofextension{2})) s^1 \big) \big(({^{(k Z^1)^{- 1}}}((k, h g) \cocycleofextension{2})) s^1 \big)^{- 1} \\
& = \big(- (k^{- 1}, k, h) z^3 + (k^{- 1}, k, h g) z^3 - (k^{- 1}, k h, g) z^3 - k^{- 1} (k, h, g) z^3 \big) \iota \\
& \qquad \big( {^{(k Z^1)^{- 1}}}((k, h) \cocycleofextension{2} (k h, g) \cocycleofextension{2} ((k, h g) \cocycleofextension{2})^{- 1}) \big) s^1 \\
& = ({^{(k Z^1)^{- 1}}}({^{k Z^1}}((h, g) \cocycleofextension{2}))) s^1 = (h, g) \cocycleofextension{2} s^1 = (h, g) Z^2
\end{align*}
and
\begin{align*}
& (h, g) Z^2 (k b) (k a) = \big( ((k^{- 1}, h, g) z^3 \iota)^{- 1} ({^{(k Z^1)^{- 1}}}((h, g) \cocycleofextension{2})) s^1 \big) (k a) \\
& = \big( ((k^{- 1}, h, g) z^3 \iota)^{- 1} ({^{(k Z^1)^{- 1} (k^{- 1} Z^1)^{- 1} (k^{- 1} Z^1)}}((h, g) \cocycleofextension{2})) s^1 \big) (k a) \\
& = \big( ((k^{- 1}, h, g) z^3 \iota)^{- 1} ({^{((k^{- 1}, k) \cocycleofextension{2})^{- 1}}}((k^{- 1}, h) \cocycleofextension{2} (k^{- 1} h, g) \cocycleofextension{2} ((k^{- 1}, h g) \cocycleofextension{2})^{- 1})) s^1 \big) (k a) \\
& = \big( ((k^{- 1}, h, g) z^3 \iota)^{- 1} \, {^{((k^{- 1}, k) Z^2)^{- 1}}}((k^{- 1}, h) Z^2 (k^{- 1} h, g) Z^2 ((k^{- 1}, h g) Z^2)^{- 1}) \big) (k a) \\
& = \big( (k^{- 1}, h, g) z^3 \iota (k a) \big)^{- 1} \, {^{((k^{- 1}, k) Z^2 (k a))^{- 1}}}\big( ((k^{- 1}, h) Z^2 (k a)) ((k^{- 1} h, g) Z^2 (k a)) ((k^{- 1}, h g) Z^2 (k a))^{- 1} \big) \\
& = \big( (k (k^{- 1}, h, g) z^3) \iota \big)^{- 1} \, {^{(((k, k^{- 1}, k) z^3 \iota)^{- 1} ({^{k Z^1}}((k^{- 1}, k) \cocycleofextension{2})) s^1)^{- 1}}}\big( (((k, k^{- 1}, h) z^3 \iota)^{- 1} ({^{k Z^1}}((k^{- 1}, h) \cocycleofextension{2})) s^1) \\
& \qquad (((k, k^{- 1} h, g) z^3 \iota)^{- 1} ({^{k Z^1}}((k^{- 1} h, g) \cocycleofextension{2})) s^1) (((k, k^{- 1}, h g) z^3 \iota)^{- 1} ({^{k Z^1}}((k^{- 1}, h g) \cocycleofextension{2})) s^1)^{- 1} \big) \\
& = \big( (k, k^{- 1}, h) z^3 \iota \big)^{- 1} \big( (k, k^{- 1}, h g) z^3 \iota \big) \big( (k, k^{- 1} h, g) z^3 \iota \big)^{- 1} \big( (k (k^{- 1}, h, g) z^3) \iota \big)^{- 1} \\
& \qquad {^{((k, k^{- 1}) \cocycleofextension{2} s^1)^{- 1}}} \big( ({^{k Z^1}}((k^{- 1}, h) \cocycleofextension{2})) s^1 ({^{k Z^1}}((k^{- 1} h, g) \cocycleofextension{2})) s^1 (({^{k Z^1}}((k^{- 1}, h g) \cocycleofextension{2})) s^1)^{- 1} \big) \\
& = \big( - (k, k^{- 1}, h) z^3 + (k, k^{- 1}, h g) z^3 - (k, k^{- 1} h, g) z^3 - (k (k^{- 1}, h, g) z^3 \big) \iota \\
& \qquad \big( {^{((k, k^{- 1}) \cocycleofextension{2})^{- 1} (k Z^1)}}(((k^{- 1}, h) \cocycleofextension{2}) ((k^{- 1} h, g) \cocycleofextension{2}) ((k^{- 1}, h g) \cocycleofextension{2})^{- 1}) \big) s^1 \\
& = \big( {^{(k^{- 1} Z^1)^{- 1}}}({^{k^{- 1} Z^1}}((h, g) \cocycleofextension{2})) \big) s^1 = (h, g) \cocycleofextension{2} s^1 = (h, g) Z^2
\end{align*}
for all \(g, h \in G\), that is, \((k a) (k b) = (k b) (k a) = \id_{M \directprod \Kernel \pi}\) and hence \(k a \in \AutomorphismGroup(M \directprod \Kernel \pi)\). Altogether, we have defined a map \(a\colon G \map \AutomorphismGroup(M \directprod \Kernel \pi)\). This map is pointed since
\[m \iota (1 a) = (1 m) \iota = m \iota\]
for all \(m \in M\) and
\[(h, g) Z^2 (1 a) = ((1, h, g) z^3 \iota)^{- 1} ({^{1 Z^1}}((h, g) \cocycleofextension{2})) s^1 = (h, g) \cocycleofextension{2} s^1 = (h, g) Z^2\]
for all \(g, h \in G\). Therefore, we get a group homomorphism \(\alpha\colon F^{\op} \map \AutomorphismGroup(M \directprod \Kernel \pi)\) with \(a = Z^1 \alpha\), turning \(M \directprod \Kernel \pi\) into an \(F\)-module with
\[{^{k Z^1}}(m \iota) = m \iota (k \alpha) = m \iota (k a) = (k m) \iota\]
for \(m \in M\), \(k \in G\), and
\begin{align*}
{^{k Z^1}}((h, g) Z^2) & = (h, g) Z^2 (k \alpha) = (h, g) Z^2 (k a) = ((k, h, g) z^3 \iota)^{- 1} ({^{k Z^1}}((h, g) \cocycleofextension{2})) s^1 \\
& = ((k, h, g) z^3 \iota)^{- 1} ((k, h) Z^2) ((k h, g) Z^2) ((k, h g) Z^2)^{- 1}
\end{align*}
for \(g, h, k \in G\). So we have 
\[((k, h) Z^2) ((k h, g) Z^2) = ((k, h, g) z^3 \iota) \, {^{k Z^1}}\!((h, g) Z^2) ((k, h g) Z^2)\]
for \(g, h, k \in G\).

We want to show that the group \(F\), the \(F\)-module \(M \directprod \Kernel \pi\) and the homomorphism \(\mu\colon M \directprod \Kernel \pi \map F\) define a crossed module.
\begin{itemize}
\item[(Equi)] We have
\[({^{k Z^1}}(m \iota)) \mu = (k m) \iota \mu = 1 = {^{k Z^1}}(m \iota \mu)\]
for \(m \in M\) and
\[({^{k Z^1}}((h, g) Z^2)) \mu = \big( ((k, h, g) z^3 \iota)^{- 1} ({^{k Z^1}}((h, g) \cocycleofextension{2})) s^1 \big) \mu = {^{k Z^1}}((h, g) \cocycleofextension{2}) = {^{k Z^1}}((h, g) Z^2 \mu)\]
for \(g, h \in G\).
\item[(Peif)] We have
\[{^{n \iota \mu}}(m \iota) = m \iota = (n + m + (- n)) \iota = {^{n \iota}}(m \iota)\]
for \(m, n \in M\) as well as
\[{^{n \iota \mu}}((h, g) Z^2) = (h, g) Z^2 = {^{n \iota}}((h, g) Z^2)\]
for \(g, h \in G\), \(n \in M\). Moreover, we have
\[{^{(l Z^1) (k Z^1)}}(m \iota) = {^{l Z^1}}({^{k Z^1}}(m \iota)) = {^{l Z^1}}((k m) \iota) = (l (k m)) \iota = ((l k) m) \iota = {^{(l k) Z^1}}(m \iota)\]
for \(m \in M\), \(k, l \in G\), and therefore
\[{^{(l, k) Z^2 \mu}}(m \iota) = {^{(l, k) \cocycleofextension{2}}}(m \iota) = {^{(l Z^1) (k Z^1) ((l k) Z^1)^{- 1}}}(m \iota) = m \iota = {^{(l, k) Z^2}}(m \iota)\]
for \(m \in M\), \(k, l \in G\). Finally, we have
\begin{align*}
& {^{(l Z^1) (k Z^1)}}((h, g) Z^2) = {^{l Z^1}}({^{k Z^1}}((h, g) Z^2)) = {^{l Z^1}}\big( ((k, h, g) z^3 \iota)^{- 1} ((k, h) Z^2) ((k h, g) Z^2) ((k, h g) Z^2)^{- 1} \big) \\
& = {^{l Z^1}}((k, h, g) z^3 \iota)^{- 1} \, {^{l Z^1}}\!((k, h) Z^2) \, {^{l Z^1}}\!((k h, g) Z^2) \, ({^{l Z^1}}\!((k, h g) Z^2))^{- 1} \\
& = \big( (l (k, h, g) z^3) \iota \big)^{- 1} \big( ((l, k, h) z^3 \iota)^{- 1} ({^{l Z^1}}((k, h) \cocycleofextension{2})) s^1 \big) \big( ((l, k h, g) z^3 \iota)^{- 1} ({^{l Z^1}}((k h, g) \cocycleofextension{2})) s^1 \big) \\
& \qquad \big( ((l, k, h g) z^3 \iota)^{- 1} ({^{l Z^1}}((k, h g) \cocycleofextension{2})) s^1 \big)^{- 1} \\
& = ((l (k, h, g) z^3) \iota)^{- 1} ((l, k, h) z^3 \iota)^{- 1} ((l, k h, g) z^3 \iota)^{- 1} (l, k, h g) z^3 \iota \, ({^{l Z^1}}((k, h) \cocycleofextension{2})) s^1 ({^{l Z^1}}((k h, g) \cocycleofextension{2})) s^1 \\
& \qquad (({^{l Z^1}}((k, h g) \cocycleofextension{2})) s^1)^{- 1} \\
& = \big(- (l, k, h) z^3 + (l, k, h g) z^3 - (l, k h, g) z^3 - l (k, h, g) z^3 \big) \iota \, \big( {^{l Z^1}}((k, h) \cocycleofextension{2} (k h, g) \cocycleofextension{2} ((k, h g) \cocycleofextension{2})^{- 1}) \big) s^1 \\
& = (- (l k, h, g) z^3 ) \iota ( {^{l Z^1}}({^{k Z^1}}((h, g) \cocycleofextension{2}))) s^1 = ((l k, h, g) z^3 \iota)^{- 1} \, ({^{(l Z^1) (k Z^1)}}((h, g) \cocycleofextension{2})) s^1 \\
& = ((l k, h, g) z^3 \iota )^{- 1} \, ({^{(l, k) \cocycleofextension{2} (l k) Z^1}}((h, g) \cocycleofextension{2})) s^1 = ((l k, h, g) z^3 \iota )^{- 1} \, {^{(l, k) Z^2}}(({^{(l k) Z^1}}((h, g) \cocycleofextension{2})) s^1) \\
& = {^{(l, k) Z^2}}\big( ((l k, h, g) z^3 \iota )^{- 1} \, ({^{(l k) Z^1}}((h, g) \cocycleofextension{2})) s^1 \big) = {^{(l, k) Z^2}}({^{(l k) Z^1}}((h, g) Z^2))
\end{align*}
for \(g, h, k, l \in G\), and hence we get
\begin{align*}
{^{(l, k) Z^2 \mu}}((h, g) Z^2) & = {^{(l, k) \cocycleofextension{2}}}((h, g) Z^2) = {^{(l Z^1) (k Z^1) ((l k) Z^1)^{- 1}}}((h, g) Z^2) \\
& = {^{(l Z^1) (k Z^1)}}({^{((l k) Z^1)^{- 1}}}((h, g) Z^2)) = {^{(l, k) Z^2}}((h, g) Z^2)
\end{align*}
for \(g, h, k, l \in G\).
\end{itemize}
Altogether, we have constructed a crossed module \(\StandardExtension(z^3)\) with \(\ModulePart \StandardExtension(z^3) = M \directprod \Kernel \pi\), \(\GroupPart \StandardExtension(z^3) = F\) and \(\structuremorphism[\StandardExtension(z^3)] = \mu\). Since the induced action of \(G\) on \(M\) is by definition given by the a priori given action of \(G\) on \(M\), we even have a crossed module extension \(\StandardExtension(z^3)\) of \(G\) with \(M\), where \(\canonicalmonomorphism[\StandardExtension(z^3)] = \iota\) and \(\canonicalepimorphism[\StandardExtension(z^3)] = \pi\).
\end{proof}

\begin{corollary} \label{cor:surjectivity_of_cohomology_class_of_extension}
We suppose given a Grothendieck universe \(\mathfrak{U}\) that contains an infinite set and we suppose \(G\) to be a group in \(\mathfrak{U}\) and \(M\) to be an abelian \(G\)-module in \(\mathfrak{U}\). The map \(\cohomologyclassofextension\colon \Extensions[\mathfrak{U}]{2}(G, M) \map \CohomologyGroup[3]_{\text{cpt}}(G, M)\) is surjective.
\end{corollary}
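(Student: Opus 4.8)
The plan is to reduce surjectivity entirely to the existence statement of proposition~\ref{prop:standard_extension_of_a_3-cocycle}. Given an arbitrary cohomology class in $\CohomologyGroup[3]_{\text{cpt}}(G, M)$, I would choose a representing cocycle $z^3 \in \CocycleGroup[3]_{\text{cpt}}(G, M)$, so that the class equals $z^3 + \CoboundaryGroup[3]_{\text{cpt}}(G, M)$. Proposition~\ref{prop:standard_extension_of_a_3-cocycle} then produces a crossed module extension $\StandardExtension(z^3)$ of $G$ with $M$ whose associated cohomology class is exactly $\cohomologyclassofextension(\StandardExtension(z^3)) = z^3 + \CoboundaryGroup[3]_{\text{cpt}}(G, M)$. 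Hence, provided $\StandardExtension(z^3)$ lies in $\Extensions[\mathfrak{U}]{2}(G, M)$, it is a preimage of the given class under $\cohomologyclassofextension$, and surjectivity follows.

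The one point that genuinely uses the hypothesis on $\mathfrak{U}$ is verifying that $\StandardExtension(z^3)$ is a $\mathfrak{U}$-crossed module extension. By construction its group part is the free group $F$ on the underlying pointed set of $G$, and its module part is $M \directprod \Kernel \pi$, where $\pi\colon F \map G$ is the canonical epimorphism. So I would first check that $F$ is a $\mathfrak{U}$-group. Since $G$ is a $\mathfrak{U}$-set and $\mathfrak{U}$ contains an infinite set (equivalently $\Naturals \in \mathfrak{U}$, cf.~\cite[exp.~I, sec.~0]{artin_grothendieck_verdier:1972:sga_4_1}), the set of reduced words over $G$, which can be realised as a subset of $\bigcup_{n \in \Naturals_0} (G \cart \{\pm 1\})^{\cart n}$, is again a $\mathfrak{U}$-set: Grothendieck universes are closed under finite products and, since $\Naturals_0 \in \mathfrak{U}$, under $\mathfrak{U}$-indexed unions. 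Thus $F$ is a $\mathfrak{U}$-group.

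The remaining data are then automatically in $\mathfrak{U}$. The kernel $\Kernel \pi$ is a subgroup of $F$, hence a $\mathfrak{U}$-group, and $M \directprod \Kernel \pi$ is a product of two $\mathfrak{U}$-groups, hence a $\mathfrak{U}$-group; equipped with the $F$-action constructed in proposition~\ref{prop:standard_extension_of_a_3-cocycle} it is an $F$-module in $\mathfrak{U}$. Therefore the underlying crossed module of $\StandardExtension(z^3)$, with $\GroupPart \StandardExtension(z^3) = F$ and $\ModulePart \StandardExtension(z^3) = M \directprod \Kernel \pi$, is in $\mathfrak{U}$, so $\StandardExtension(z^3) \in \Extensions[\mathfrak{U}]{2}(G, M)$.

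The main, and indeed only, obstacle here is this universe bookkeeping; everything substantive has already been carried out in proposition~\ref{prop:standard_extension_of_a_3-cocycle}. I expect the infinite-set hypothesis to be exactly what is needed to keep the free group construction inside $\mathfrak{U}$, since without an infinite set a non-trivial $F$ would fail to be a $\mathfrak{U}$-group.
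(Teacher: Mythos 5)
Your proof is correct and follows the same route as the paper: the paper's entire proof is the one-line observation that the free group construction can be carried out inside \(\mathfrak{U}\) because \(\mathfrak{U}\) contains an infinite set, with the reduction to proposition~\ref{prop:standard_extension_of_a_3-cocycle} left implicit. You have merely spelled out the universe bookkeeping (reduced words as a subset of \(\bigcup_{n \in \Naturals_0} (G \cart \{\pm 1\})^{\cart n}\), closure under subsets and finite products) that the paper takes for granted.
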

\begin{proof}
This follows since the free group construction can be done in \(\mathfrak{U}\), provided \(\mathfrak{U}\) contains an infinite set.
\end{proof}

\begin{definition}[standard extension with respect to a \(3\)-cocycle] \label{def:standard_extension_with_respect_to_a_3-cocycle}
We suppose given a \(3\)-cocycle \(z^3 \in \CocycleGroup[3]_{\text{cpt}}(G, M)\). The crossed module extension \(\StandardExtension(z^3)\) as constructed in proposition~\ref{prop:standard_extension_of_a_3-cocycle} is called the \newnotion{standard extension} of \(G\) with \(M\) with respect to \(z^3\). The section system \((\standardsectionsystem_{z^3}^1, \standardsectionsystem_{z^3}^0)\) as also defined in loc.\ cit.\ is said to be the \newnotion{standard section system} for \(\StandardExtension(z^3)\) resp.\ \(z^3\).
\end{definition}

Roughly said, the next proposition states that every \(3\)-cocycle of the standard extension \(\StandardExtension(z^3)\) with respect to a given \(3\)-cocycle \(z^3\) comes from a section system. This is a particular feature of the standard extension, cf.\ example~\ref{ex:non-trivial_coboundary_is_not_3-cocycle_of_trivial_extension_with_respect_to_a_section_system}.

\begin{proposition} \label{prop:every_cocycle_of_the_standard_extension_comes_from_a_section_system}
We suppose given a \(3\)-cocycle \(z^3 \in \CocycleGroup[3]_{\text{cpt}}(G, M)\). For every componentwise pointed \(2\){\nbd}cochain \(c^2 \in \CochainComplex[2]_{\text{cpt}}(G, M)\), the group homomorphism \(\tilde s^1\colon \Image \structuremorphism[\StandardExtension(z^3)] \map \ModulePart \StandardExtension(z^3)\) uniquely determined by \((h, g) \cocycleofextension{2}_{\StandardExtension(z^3), \standardsectionsystem_{z^3}^0} \tilde s^1 := (h, g) c^2 \canonicalmonomorphism[\StandardExtension(z^3)] (h, g) \cocycleofextension{2}_{\StandardExtension(z^3), \standardsectionsystem_{z^3}^0} \standardsectionsystem_{z^3}^1\) for \(g, h \in G\) is a section of the underlying pointed map of \(\structuremorphism[\StandardExtension(z^3)]|^{\Image \structuremorphism[\StandardExtension(z^3)]}\) such that
\[\cocycleofextension{3}_{\StandardExtension(z^3), (\tilde s^1, \standardsectionsystem_{z^3}^0)} = c^2 \differential + z^3.\]
\end{proposition}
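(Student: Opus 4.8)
The plan is to reduce everything, after one genuine well-definedness check, to proposition~\ref{prop:choices_of_liftings_of_the_non-abelian_2-cocycle_correspond_to_3-cocycles} together with the identity $\cocycleofextension{3}_{\StandardExtension(z^3), (Z^2, Z^1)} = z^3$ built into proposition~\ref{prop:standard_extension_of_a_3-cocycle}. Throughout I abbreviate $\mu := \structuremorphism[\StandardExtension(z^3)]$, $\iota := \canonicalmonomorphism[\StandardExtension(z^3)]$, $\pi := \canonicalepimorphism[\StandardExtension(z^3)]$, $s^0 := \standardsectionsystem_{z^3}^0$, $s^1 := \standardsectionsystem_{z^3}^1$ and $\cocycleofextension{2} := \cocycleofextension{2}_{\StandardExtension(z^3), s^0}$, and I write $Z^1 := s^0$, $Z^2 := \cocycleofextension{2} s^1$ for the lifting system coming from the standard section system, so that $\cocycleofextension{3}_{\StandardExtension(z^3), (Z^2, Z^1)} = z^3$ by proposition~\ref{prop:standard_extension_of_a_3-cocycle}. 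Note that $\Image \structuremorphism[\StandardExtension(z^3)] = \Kernel \pi$ and that the defining formula for $\tilde s^1$ reads $(h, g) \cocycleofextension{2} \tilde s^1 = (h, g) c^2 \iota \, (h, g) Z^2$ for $g, h \in G$.

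First I would establish existence and uniqueness of $\tilde s^1$. By remark~\ref{rem:generators_of_the_image_group_of_the_standard_extension}\ref{rem:generators_of_the_image_group_of_the_standard_extension:free_group}, the group $\Kernel \pi$ is free on the pointed set $\Image \cocycleofextension{2}$, so a group homomorphism out of it is determined by a pointed map assigning to each free generator $(h, g) \cocycleofextension{2}$ the element $(h, g) c^2 \iota \, (h, g) Z^2$. The only thing to verify is that this assignment depends solely on the value $(h, g) \cocycleofextension{2}$ and not on the chosen pair $(h, g)$. By parts \ref{rem:generators_of_the_image_group_of_the_standard_extension:point} and \ref{rem:generators_of_the_image_group_of_the_standard_extension:non-points} of that remark, two distinct pairs can share a cocycle value only when that value equals $1$, namely exactly when $g = 1$ or $h = 1$; in that case the componentwise pointedness of $c^2$, of $Z^2$ and of $\iota$ forces the prescribed element to equal $1$ as well, consistent with the distinguished point of $\ModulePart \StandardExtension(z^3)$. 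Hence the pointed map is well-defined and extends uniquely to the asserted group homomorphism $\tilde s^1$.

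Next I would check the section property on generators, using $\iota \mu = \triv$ and $Z^2 \mu = \cocycleofextension{2} s^1 \mu = \cocycleofextension{2}$: this gives $(h, g) \cocycleofextension{2} \tilde s^1 \mu = (h, g) c^2 \iota \mu \; (h, g) Z^2 \mu = (h, g) \cocycleofextension{2}$, so $\tilde s^1 (\mu|^{\Image \mu}) = \id$ on the generating set and therefore everywhere, and $(\tilde s^1, s^0)$ is a section system. For the final identity, the lifting system coming from $(\tilde s^1, s^0)$ is $(\tilde Z^2, Z^1)$ with $(h, g) \tilde Z^2 = (h, g) \cocycleofextension{2} \tilde s^1 = (h, g) c^2 \iota \, (h, g) Z^2$; this is precisely the shape treated in proposition~\ref{prop:choices_of_liftings_of_the_non-abelian_2-cocycle_correspond_to_3-cocycles}\ref{prop:choices_of_liftings_of_the_non-abelian_2-cocycle_correspond_to_3-cocycles:3-cocycle_of_lifting}, which (with the same $Z^1$ and the same cochain $c^2$) yields $\cocycleofextension{3}_{\StandardExtension(z^3), (\tilde Z^2, Z^1)} = c^2 \differential + \cocycleofextension{3}_{\StandardExtension(z^3), (Z^2, Z^1)} = c^2 \differential + z^3$. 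Since $\cocycleofextension{3}_{\StandardExtension(z^3), (\tilde s^1, s^0)} = \cocycleofextension{3}_{\StandardExtension(z^3), (\tilde Z^2, Z^1)}$ by definition, this is the assertion.

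The hard part is the well-definedness of $\tilde s^1$, and this is exactly where the special structure of the standard extension is used: the injectivity of $\cocycleofextension{2}$ on pairs with $g, h \neq 1$ provided by remark~\ref{rem:generators_of_the_image_group_of_the_standard_extension} is what allows an arbitrary componentwise pointed cochain $c^2$ to be absorbed into an honest section $\tilde s^1$, in contrast to the situation of example~\ref{ex:non-trivial_coboundary_is_not_3-cocycle_of_trivial_extension_with_respect_to_a_section_system}. Once that is secured, the remaining two steps are direct applications of propositions~\ref{prop:standard_extension_of_a_3-cocycle} and~\ref{prop:choices_of_liftings_of_the_non-abelian_2-cocycle_correspond_to_3-cocycles}.
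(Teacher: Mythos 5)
Your proposal is correct and follows essentially the same route as the paper: well-definedness of the pointed map on the free generators $\Image \cocycleofextension{2}$ via remark~\ref{rem:generators_of_the_image_group_of_the_standard_extension}, unique extension to a group homomorphism which is a section, and then the identification of the resulting lifting system with the one treated in proposition~\ref{prop:choices_of_liftings_of_the_non-abelian_2-cocycle_correspond_to_3-cocycles}\ref{prop:choices_of_liftings_of_the_non-abelian_2-cocycle_correspond_to_3-cocycles:3-cocycle_of_lifting}. The only difference is that you spell out the well-definedness and section checks that the paper leaves implicit, which is a faithful elaboration rather than a different argument.
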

\begin{proof}
By remark~\ref{rem:generators_of_the_image_group_of_the_standard_extension}, we have a well-defined pointed map \(\Image \cocycleofextension{2} \map \ModulePart \StandardExtension(z^3), (h, g) \cocycleofextension{2} \mapsto (h, g) c^2 (h, g) \cocycleofextension{2} \standardsectionsystem_{z^3}^1\). Since \(\Image \structuremorphism\) is freely generated by the pointed set \(\Image \cocycleofextension{2}\), it follows that there exists a unique group homomorphism \(\tilde s^1\colon \Image \structuremorphism \map \ModulePart \StandardExtension(z^3)\) with \((h, g) \cocycleofextension{2} \tilde s^1 = (h, g) c^2 \canonicalmonomorphism (h, g) \cocycleofextension{2} \standardsectionsystem_{z^3}^1\) for \(g, h \in G\). This group homomorphism \(\tilde s^1\) is a section of the underlying pointed map of \(\structuremorphism\) and thus \((\tilde s^1, \standardsectionsystem_{z^3}^0)\) is a section system for \(\StandardExtension(z^3)\). We denote the lifting system of \(\StandardExtension(z^3)\) coming from \((\standardsectionsystem_{z^3}^1, \standardsectionsystem_{z^3}^0)\) by \((Z^2, Z^1)\), and the one coming from \((\tilde s^1, \standardsectionsystem_{z^3}^0)\) by \((\tilde Z^2, Z^1)\). Then we have
\[(h, g) \tilde Z^2 = (h, g) \cocycleofextension{2} \tilde s^1 = (h, g) c^2 \canonicalmonomorphism (h, g) \cocycleofextension{2} \standardsectionsystem_{z^3}^1 = (h, g) c^2 \canonicalmonomorphism (h, g) Z^2,\]
and hence
\[\cocycleofextension{3}_{\StandardExtension(z^3), (\tilde s^1, \standardsectionsystem_{z^3}^0)} = c^2 \differential + z^3\]
by proposition~\ref{prop:choices_of_liftings_of_the_non-abelian_2-cocycle_correspond_to_3-cocycles}\ref{prop:choices_of_liftings_of_the_non-abelian_2-cocycle_correspond_to_3-cocycles:3-cocycle_of_lifting}.
\end{proof}

\begin{proposition} \label{prop:extension_equivalence_from_standard_extension_to_arbitrary_crossed_module_extension}
We suppose given a crossed module extension \(E\) of \(G\) with \(M\), and we choose \(z^3 \in \CocycleGroup[3]_{\text{cpt}}(G, M)\) with \(\cohomologyclassofextension(E) = z^3 + \CoboundaryGroup[3]_{\text{cpt}}(G, M)\). Moreover, we suppose given a lifting system \((Z^2, Z^1)\) for \(E\) with \(\cocycleofextension{3}_{E, (Z^2, Z^1)} = z^3\) and we denote by \((\tilde Z^2, \tilde Z^1)\) the lifting system of \(\StandardExtension(z^3)\) coming from the standard section system \((\standardsectionsystem_{z^3}^1, \standardsectionsystem_{z^3}^0)\). There exists a unique extension equivalence \(\omega\colon \StandardExtension(z^3) \map E\) with \(g \tilde Z^1 \omega = g Z^1\) for \(g \in G\) and \((h, g) \tilde Z^2 \omega = (h, g) Z^2\) for \(g, h \in G\).
\[\begin{tikzpicture}[baseline=(m-2-1.base)]
  \matrix (m) [diagram]{
    M & \ModulePart \StandardExtension(z^3) & \GroupPart \StandardExtension(z^3) & G \\
    M & \ModulePart E & \GroupPart E & G \\};
  \path[->, font=\scriptsize]
    (m-1-1) edge node[above] {\(\canonicalmonomorphism[\StandardExtension(z^3)]\)} (m-1-2)
            edge[equality] (m-2-1)
    (m-1-2) edge node[above] {\(\structuremorphism[\StandardExtension(z^3)]\)} (m-1-3)
            edge node[right] {\(\ModulePart \omega\)} (m-2-2)
    (m-1-3) edge node[above] {\(\canonicalepimorphism[\StandardExtension(z^3)]\)} (m-1-4)
            edge node[right] {\(\GroupPart \omega\)} (m-2-3)
    (m-1-4) edge[equality] (m-2-4)
    (m-2-1) edge node[above] {\(\canonicalmonomorphism[E]\)} (m-2-2)
    (m-2-2) edge node[above] {\(\structuremorphism[E]\)} (m-2-3)
    (m-2-3) edge node[above] {\(\canonicalepimorphism[E]\)} (m-2-4);
\end{tikzpicture}\]
In particular,
\[\StandardExtension(z^3) \extensionequivalent E.\]
\end{proposition}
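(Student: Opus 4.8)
The plan is to build $\omega$ from its group part and its module part separately, exploiting the freeness built into $\StandardExtension(z^3)$, and then to verify the crossed module morphism axioms by checking them on generators. Since $\GroupPart \StandardExtension(z^3) = F$ is free on the underlying pointed set of $G$ with basis $\tilde Z^1 = \standardsectionsystem_{z^3}^0$, and $Z^1\colon G \map \GroupPart E$ is a pointed map, the universal property of the free group yields a unique group homomorphism $\GroupPart \omega\colon F \map \GroupPart E$ with $g \tilde Z^1 (\GroupPart \omega) = g Z^1$ for all $g \in G$. Comparing on the generators $g \tilde Z^1$ and using that $\tilde Z^1$ is a section of $\pi$ while $Z^1$ is a lift of $\id_G$ along $\canonicalepimorphism[E]$ gives $\pi = (\GroupPart \omega) \canonicalepimorphism[E]$ at once.

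For the module part, recall that $\ModulePart \StandardExtension(z^3) = M \directprod \Kernel \pi$ with $\Kernel \pi$ free on the pointed set $\Image \cocycleofextension{2}_{F, \tilde Z^1}$ by remark~\ref{rem:generators_of_the_image_group_of_the_standard_extension}. As $Z^2$ is componentwise pointed and the non-trivial free generators correspond bijectively to the pairs $(h, g)$ with $g, h \neq 1$, the pointed map $\Image \cocycleofextension{2}_{F, \tilde Z^1} \map \ModulePart E$, $(h, g) \cocycleofextension{2}_{F, \tilde Z^1} \mapsto (h, g) Z^2$ is well-defined and extends, by freeness, to a unique homomorphism $\varphi\colon \Kernel \pi \map \ModulePart E$. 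Since $\Image \canonicalmonomorphism[E] = \Kernel \structuremorphism[E]$ is central in $\ModulePart E$, the images of $\canonicalmonomorphism[E]$ and $\varphi$ commute, so remark~\ref{rem:criterion_when_two_group_homomorphisms_with_common_target_induce_a_group_homomorphism_from_the_direct_product} yields a homomorphism $\ModulePart \omega\colon M \directprod \Kernel \pi \map \ModulePart E$, $(m, r) \mapsto (m \canonicalmonomorphism[E]) (r \varphi)$. By construction $\iota (\ModulePart \omega) = \canonicalmonomorphism[E]$ and $(h, g) \tilde Z^2 (\ModulePart \omega) = (h, g) Z^2$.

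It remains to show that $\omega = (\GroupPart \omega, \ModulePart \omega)$ is a morphism of crossed modules; as $\Image \iota \union \Image \tilde Z^2$ generates $M \directprod \Kernel \pi$ and $\Image \tilde Z^1$ generates $F$, both axioms reduce to checking on these generators. The compatibility $(\ModulePart \omega) \structuremorphism[E] = \mu (\GroupPart \omega)$ follows from $m \iota \mu = 1$ and $(h, g) \tilde Z^2 \mu = (h, g) \cocycleofextension{2}_{F, \tilde Z^1}$, together with the facts that $\Image \canonicalmonomorphism[E] = \Kernel \structuremorphism[E]$, that $Z^2$ lifts $\cocycleofextension{2}_{E, Z^1}$, and that $\GroupPart \omega$ carries $(h, g) \cocycleofextension{2}_{F, \tilde Z^1}$ to $(h, g) \cocycleofextension{2}_{E, Z^1}$. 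For the equivariance $({^f}x) (\ModulePart \omega) = {^{f (\GroupPart \omega)}}(x (\ModulePart \omega))$ one reduces further to $f = k \tilde Z^1$; the case $x = m \iota$ then uses the action formula ${^{k \tilde Z^1}}(m \iota) = (k m) \iota$ of the standard extension and the defining relation ${^{e}}(m \canonicalmonomorphism[E]) = ((e \canonicalepimorphism[E]) m) \canonicalmonomorphism[E]$ of the crossed module extension $E$.

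The main obstacle is the equivariance for $x = (h, g) \tilde Z^2$, where the two cocycles must be reconciled. Applying the homomorphism $\ModulePart \omega$ to the action formula ${^{k \tilde Z^1}}((h, g) \tilde Z^2) = ((k, h, g) z^3 \iota)^{- 1} (k, h) \tilde Z^2 (k h, g) \tilde Z^2 ((k, h g) \tilde Z^2)^{- 1}$ of $\StandardExtension(z^3)$ turns its right-hand side into $((k, h, g) z^3 \canonicalmonomorphism[E])^{- 1} (k, h) Z^2 (k h, g) Z^2 ((k, h g) Z^2)^{- 1}$. On the other hand, rewriting $(k, h) Z^2 (k h, g) Z^2$ with the defining identity of $\cocycleofextension{3}_{E, (Z^2, Z^1)}$ from remark~\ref{rem:lifting_systems_of_crossed_module_extensions_lead_to_3-cocycles}, using $\cocycleofextension{3}_{E, (Z^2, Z^1)} = z^3$ and the centrality of $\Image \canonicalmonomorphism[E]$, expresses ${^{k Z^1}}((h, g) Z^2)$ as exactly the same element; hence both sides agree, which is precisely where the hypothesis $\cocycleofextension{3}_{E, (Z^2, Z^1)} = z^3$ enters. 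This proves that $\omega$ is a crossed module morphism, and together with $\iota (\ModulePart \omega) = \canonicalmonomorphism[E]$ and $\pi = (\GroupPart \omega) \canonicalepimorphism[E]$ it is an extension equivalence. Uniqueness is forced because $\Image \tilde Z^1$ generates $F$ and $\Image \iota \union \Image \tilde Z^2$ generates $M \directprod \Kernel \pi$, so the two prescribed conditions (together with $\iota (\ModulePart \omega) = \canonicalmonomorphism[E]$, which every extension equivalence satisfies) determine $\omega$; finally $\StandardExtension(z^3) \extensionequivalent E$ by the definition of $\extensionequivalent$.
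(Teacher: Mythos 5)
Your proposal is correct and follows essentially the same route as the paper's proof: the group part is obtained from the freeness of \(\GroupPart \StandardExtension(z^3)\) on the pointed set \(G\), the module part from the freeness of \(\Kernel \pi\) on \(\Image \cocycleofextension{2}\) combined with remark~\ref{rem:criterion_when_two_group_homomorphisms_with_common_target_induce_a_group_homomorphism_from_the_direct_product}, and the crossed module morphism axioms are checked on generators, with the hypothesis \(\cocycleofextension{3}_{E, (Z^2, Z^1)} = z^3\) entering exactly where you place it, namely in matching the action formula of the standard extension against the defining identity of \(\cocycleofextension{3}_{E, (Z^2, Z^1)}\). Your explicit treatment of uniqueness via generators is only implicit in the paper but is correct.
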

\begin{proof}
Since \(\GroupPart \StandardExtension(z^3)\) is a free group on the underlying pointed set of \(G\), the pointed map \(Z^1\colon G \map \GroupPart E\) extends uniquely to a group homomorphism \(\omega_0\colon \GroupPart \StandardExtension(z^3) \map \GroupPart E\) with \((g \tilde Z^1) \omega_0 = g Z^1\) for \(g \in G\). Further, by remark~\ref{rem:criterion_when_two_group_homomorphisms_with_common_target_induce_a_group_homomorphism_from_the_direct_product} there exists a unique group homomorphism \(\omega_1\colon \ModulePart \StandardExtension(z^3) \map \ModulePart E\) given by \(m \canonicalmonomorphism[\StandardExtension(z^3)] \omega_1 = m \canonicalmonomorphism[E]\) for \(m \in M\) and \(((h, g) \tilde Z^2) \omega_1 = (h, g) Z^2\) for \(g, h \in G\). We get
\[m \canonicalmonomorphism[\StandardExtension(z^3)] \omega_1 \structuremorphism[E] = m \canonicalmonomorphism[E] \structuremorphism[E] = 1 = 1 \omega_0 = m \canonicalmonomorphism[\StandardExtension(z^3)] \structuremorphism[\StandardExtension(z^3)] \omega_0\]
for all \(m \in M\) and
\[(h, g) \tilde Z^2 \omega_1 \structuremorphism[E] = (h, g) Z^2 \structuremorphism[E] = (h, g) \cocycleofextension{2}_{E, Z^1} = (h, g) \cocycleofextension{2}_{\StandardExtension(z^3), \tilde Z^1} \omega_0 = (h, g) \tilde Z^2 \structuremorphism[\StandardExtension(z^3)] \omega_0\]
for all \(g, h \in G\), that is, \(\omega_1 \structuremorphism[E] = \structuremorphism[\StandardExtension(z^3)] \omega_0\). Moreover, we have
\[({^{k \tilde Z^1}}(m \canonicalmonomorphism[\StandardExtension(z^3)])) \omega_1 = (k m) \canonicalmonomorphism[\StandardExtension(z^3)] \omega_1 = (k m) \canonicalmonomorphism[E] = {^{k Z^1}}(m \canonicalmonomorphism[E]) = {^{k \tilde Z^1 \omega_0}}(m \canonicalmonomorphism[\StandardExtension(z^3)] \omega_1)\]
for all \(m \in M\), \(k \in G\), and
\begin{align*}
({^{k \tilde Z^1}}(h, g) \tilde Z^2) \omega_1 & = (((k, h, g) z^3 \canonicalmonomorphism[\StandardExtension(z^3)])^{- 1} (k, h) \tilde Z^2 (k h, g) \tilde Z^2 ((k, g h) \tilde Z^2)^{- 1}) \omega_1 \\
& = ((k, h, g) z^3 \canonicalmonomorphism[\StandardExtension(z^3)] \omega_1)^{- 1} (k, h) \tilde Z^2 \omega_1 (k h, g) \tilde Z^2 \omega_1 ((k, g h) \tilde Z^2 \omega_1)^{- 1} \\
& = ((k, h, g) z^3 \canonicalmonomorphism[E])^{- 1} (k, h) Z^2 (k h, g) Z^2 ((k, g h) Z^2)^{- 1} = {^{k Z^1}}((h, g) Z^2) \\
& = {^{(k \tilde Z^1) \omega_0}}((h, g) \tilde Z^2 \omega_1)
\end{align*}
for all \(g, h, k \in G\), that is, we have a morphism of crossed modules \(\omega\colon \StandardExtension(z^3) \map E\) with \(\ModulePart \omega = \omega_1\) and \(\GroupPart \omega = \omega_0\). Since \(\canonicalmonomorphism[\StandardExtension(z^3)] (\ModulePart \omega) = \canonicalmonomorphism[E]\) and \((\GroupPart \omega) \canonicalepimorphism[E] = \canonicalepimorphism[\StandardExtension(z^3)]\) by construction, we even have an extension equivalence \(\omega\colon \StandardExtension(z^3) \map E\).
\end{proof}

\begin{corollary} \label{cor:extension_equivalence_allows_chains_of_length_2}
We suppose given crossed module extensions \(E\) and \(\tilde E\) of \(G\) with \(M\), and we choose \(z^3 \in \CocycleGroup[3]_{\text{cpt}}(G, M)\) such that \(\cohomologyclassofextension(E) = z^3 + \CoboundaryGroup[3]_{\text{cpt}}(G, M)\). Then \(E \extensionequivalent \tilde E\) if and only if there exist extension equivalences \(\omega\colon \StandardExtension(z^3) \map E\) and \(\tilde \omega\colon \StandardExtension(z^3) \map \tilde E\).
\[\begin{tikzpicture}[baseline=(m-3-1.base)]
  \matrix (m) [diagram]{
    M & \ModulePart E &  \GroupPart E & G \\
    M & \ModulePart \StandardExtension(z^3) & \GroupPart \StandardExtension(z^3) & G \\
    M & \ModulePart \tilde E & \GroupPart \tilde E & G \\};
  \path[->, font=\scriptsize]
    (m-1-1) edge node[above] {\(\canonicalmonomorphism[E]\)} (m-1-2)
    (m-1-2) edge node[above] {\(\structuremorphism[E]\)} (m-1-3)
    (m-1-3) edge node[above] {\(\canonicalepimorphism[E]\)} (m-1-4)
    (m-2-1) edge node[above] {\(\canonicalmonomorphism[\StandardExtension(z^3)]\)} (m-2-2)
            edge[equality] (m-3-1)
            edge[equality] (m-1-1)
    (m-2-2) edge node[above] {\(\structuremorphism[\StandardExtension(z^3)]\)} (m-2-3)
            edge node[right] {\(\ModulePart \tilde \omega\)} (m-3-2)
            edge node[right] {\(\ModulePart \omega\)} (m-1-2)
    (m-2-3) edge node[above] {\(\canonicalepimorphism[\StandardExtension(z^3)]\)} (m-2-4)
            edge node[right] {\(\GroupPart \tilde \omega\)} (m-3-3)
            edge node[right] {\(\GroupPart \omega\)} (m-1-3)
    (m-2-4) edge[equality] (m-3-4)
            edge[equality] (m-1-4)
    (m-3-1) edge node[above] {\(\canonicalmonomorphism[\tilde E]\)} (m-3-2)
    (m-3-2) edge node[above] {\(\structuremorphism[\tilde E]\)} (m-3-3)
    (m-3-3) edge node[above] {\(\canonicalepimorphism[\tilde E]\)} (m-3-4);
\end{tikzpicture}\]
\end{corollary}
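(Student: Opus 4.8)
The plan is to reduce the whole statement to the two preceding propositions, exploiting that the associated cohomology class is an invariant of the extension equivalence class. I would treat the two implications separately.

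For the backward implication, suppose we are given extension equivalences $\omega\colon \StandardExtension(z^3) \map E$ and $\tilde\omega\colon \StandardExtension(z^3) \map \tilde E$. By the very definition of $\extensionequivalent$ as the equivalence relation generated by the existence of an extension equivalence, the morphism $\omega$ witnesses $\StandardExtension(z^3) \extensionequivalent E$ and the morphism $\tilde\omega$ witnesses $\StandardExtension(z^3) \extensionequivalent \tilde E$. Since $\extensionequivalent$ is symmetric and transitive, I conclude $E \extensionequivalent \StandardExtension(z^3) \extensionequivalent \tilde E$, hence $E \extensionequivalent \tilde E$. This direction requires no computation at all.

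For the forward implication, suppose $E \extensionequivalent \tilde E$; the goal is to construct the two extension equivalences. First I would produce $\omega$: since $\cohomologyclassofextension(E) = z^3 + \CoboundaryGroup[3]_{\text{cpt}}(G, M)$ by hypothesis, proposition~\ref{prop:every_3-cocycle_of_the_cohomology_class_of_an_crossed_module_extension_is_constructable} lets me choose a lifting system $(Z^2, Z^1)$ for $E$ with $\cocycleofextension{3}_{E, (Z^2, Z^1)} = z^3$, and then proposition~\ref{prop:extension_equivalence_from_standard_extension_to_arbitrary_crossed_module_extension} yields an extension equivalence $\omega\colon \StandardExtension(z^3) \map E$. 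To obtain $\tilde\omega$ in the same manner, I need to know that $z^3$ also represents the cohomology class of $\tilde E$, and this is precisely where the hypothesis $E \extensionequivalent \tilde E$ enters: by corollary~\ref{cor:equal_cohomology_classes_for_equivalent_crossed_module_extensions} the associated cohomology class is constant on extension classes, so $\cohomologyclassofextension(\tilde E) = \cohomologyclassofextension(E) = z^3 + \CoboundaryGroup[3]_{\text{cpt}}(G, M)$. Repeating the previous argument for $\tilde E$ then supplies a lifting system for $\tilde E$ whose $3$-cocycle is $z^3$, and hence an extension equivalence $\tilde\omega\colon \StandardExtension(z^3) \map \tilde E$.

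Since both directions follow immediately from earlier results, there is no genuine obstacle here; the only point that needs care is invoking the correct invariance statement — namely that $\cohomologyclassofextension$ descends to extension classes — in order to transfer the realization of $z^3$ from $E$ to $\tilde E$. Everything else is the transitivity and symmetry of $\extensionequivalent$ together with the existence results already established for the standard extension.
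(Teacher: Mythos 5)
Your proposal is correct and follows essentially the same route as the paper: the backward direction is immediate from the definition of \(\extensionequivalent\), and the forward direction combines corollary~\ref{cor:equal_cohomology_classes_for_equivalent_crossed_module_extensions} with proposition~\ref{prop:every_3-cocycle_of_the_cohomology_class_of_an_crossed_module_extension_is_constructable} and proposition~\ref{prop:extension_equivalence_from_standard_extension_to_arbitrary_crossed_module_extension}. If anything, your version is slightly cleaner in that it works with the \(z^3\) fixed in the statement rather than re-choosing a representative as the paper's proof does.
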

\begin{proof}
If there exist extension equivalences \(\omega\colon \StandardExtension(z^3) \map E\) and \(\tilde \omega\colon \StandardExtension(z^3) \map \tilde E\), then \(E \extensionequivalent \tilde E\) by definition. So we suppose conversely that \(E \extensionequivalent \tilde E\). By corollary~\ref{cor:equal_cohomology_classes_for_equivalent_crossed_module_extensions}, this implies \(\cohomologyclassofextension(E) = \cohomologyclassofextension(\tilde E)\). We choose a componentwise pointed \(3\)-cocycle \(z^3 \CocycleGroup[3]_{\text{cpt}}(G, M)\) with \(\cohomologyclassofextension(E) = \cohomologyclassofextension(\tilde E) = z^3 + \CoboundaryGroup[3]_{\text{cpt}}(G, M)\). By proposition~\ref{prop:every_3-cocycle_of_the_cohomology_class_of_an_crossed_module_extension_is_constructable}, there exist lifting systems \((Z^2, Z^1)\) for \(E\) and \((\tilde Z^2, \tilde Z^1)\) for \(\tilde E\) with \(\cocycleofextension{3}_{E, (Z^2, Z^1)} = \cocycleofextension{3}_{\tilde E, (\tilde Z^2, \tilde Z^1)} = z^3\), and we are done with proposition~\ref{prop:extension_equivalence_from_standard_extension_to_arbitrary_crossed_module_extension}.
\end{proof}

As a further corollary, we obtain the fact that the standard extensions with respect to cohomologous cocycles are extension equivalent.

\begin{corollary} \label{cor:standard_extensions_with_respect_to_cohomologous_3-cocycles_are_extension_equivalent}
Given cocycles \(z^3, \tilde z^3 \in \CocycleGroup[3]_{\text{cpt}}(G, M)\) with \(z^3 + \CoboundaryGroup[3]_{\text{cpt}}(G, M) = \tilde z^3 + \CoboundaryGroup[3]_{\text{cpt}}(G, M)\), we have \(\StandardExtension(z^3) \extensionequivalent \StandardExtension(\tilde z^3)\). That is, given a Grothendieck universe \(\mathfrak{U}\) that contains an infinite set and supposed that \(G\) and \(M\) are in \(\mathfrak{U}\), we obtain an induced map
\[\extensionclass\colon \CohomologyGroup[3]_{\text{cpt}}(G, M) \map \ExtensionClasses[\mathfrak{U}]{2}(G, M), z^3 + \CoboundaryGroup[3]_{\text{cpt}}(G, M) \mapsto [\StandardExtension(z^3)]_{\extensionequivalent}.\]
\end{corollary}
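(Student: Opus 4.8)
The plan is to reduce the extension equivalence $\StandardExtension(z^3) \extensionequivalent \StandardExtension(\tilde z^3)$ to the already-established comparison between a standard extension and an arbitrary crossed module extension realizing the same cohomology class, and then to read off the induced map. First I would record that, by proposition~\ref{prop:standard_extension_of_a_3-cocycle}, we have
\[\cohomologyclassofextension(\StandardExtension(\tilde z^3)) = \tilde z^3 + \CoboundaryGroup[3]_{\text{cpt}}(G, M) = z^3 + \CoboundaryGroup[3]_{\text{cpt}}(G, M),\]
where the second equality is precisely the hypothesis that $z^3$ and $\tilde z^3$ are cohomologous. Thus $\StandardExtension(\tilde z^3)$ is a crossed module extension of $G$ with $M$ whose associated cohomology class is represented by $z^3$.

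Next I would apply proposition~\ref{prop:every_3-cocycle_of_the_cohomology_class_of_an_crossed_module_extension_is_constructable} to the extension $\StandardExtension(\tilde z^3)$ and the cocycle $z^3$: since $\cohomologyclassofextension(\StandardExtension(\tilde z^3)) = z^3 + \CoboundaryGroup[3]_{\text{cpt}}(G, M)$, choosing any lift $Z^1$ of $\id_G$ along the underlying pointed map of $\canonicalepimorphism[\StandardExtension(\tilde z^3)]$ yields a map $Z^2$ such that $(Z^2, Z^1)$ is a lifting system for $\StandardExtension(\tilde z^3)$ with $\cocycleofextension{3}_{\StandardExtension(\tilde z^3), (Z^2, Z^1)} = z^3$. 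Feeding this lifting system into proposition~\ref{prop:extension_equivalence_from_standard_extension_to_arbitrary_crossed_module_extension}, with the role of the arbitrary extension $E$ played by $\StandardExtension(\tilde z^3)$, produces an extension equivalence $\omega\colon \StandardExtension(z^3) \map \StandardExtension(\tilde z^3)$, so that $\StandardExtension(z^3) \extensionequivalent \StandardExtension(\tilde z^3)$, as desired. (Alternatively one could invoke corollary~\ref{cor:extension_equivalence_allows_chains_of_length_2}, but the route through proposition~\ref{prop:extension_equivalence_from_standard_extension_to_arbitrary_crossed_module_extension} is the most direct.)

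It remains to assemble the induced map. Under the standing assumptions that $\mathfrak{U}$ contains an infinite set and that $G$ and $M$ are in $\mathfrak{U}$, the free group construction in proposition~\ref{prop:standard_extension_of_a_3-cocycle} can be carried out inside $\mathfrak{U}$, exactly as in corollary~\ref{cor:surjectivity_of_cohomology_class_of_extension}, so that $\StandardExtension(z^3)$ is a $\mathfrak{U}$-crossed module extension and $[\StandardExtension(z^3)]_{\extensionequivalent}$ is a genuine element of $\ExtensionClasses[\mathfrak{U}]{2}(G, M)$. The assignment $z^3 + \CoboundaryGroup[3]_{\text{cpt}}(G, M) \mapsto [\StandardExtension(z^3)]_{\extensionequivalent}$ is independent of the chosen representative of the cohomology class precisely because cohomologous cocycles have extension equivalent standard extensions, which is the statement just proved; hence the map $\extensionclass\colon \CohomologyGroup[3]_{\text{cpt}}(G, M) \map \ExtensionClasses[\mathfrak{U}]{2}(G, M)$ is well defined. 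There is no real obstacle here beyond this bookkeeping: the one point requiring care is the universe membership of $\GroupPart \StandardExtension(z^3) = F$ and of $\Kernel \pi$ inside $\ModulePart \StandardExtension(z^3)$, which is guaranteed only once $\mathfrak{U}$ contains an infinite set; everything else is a direct combination of the previously proven propositions.
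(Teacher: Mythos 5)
Your proposal is correct, and it differs from the paper's proof in which lemma it uses to produce the comparison datum fed into proposition~\ref{prop:extension_equivalence_from_standard_extension_to_arbitrary_crossed_module_extension}. The paper fixes a componentwise pointed \(2\)-cochain \(c^2\) with \(\tilde z^3 = c^2 \differential + z^3\) and invokes proposition~\ref{prop:every_cocycle_of_the_standard_extension_comes_from_a_section_system} --- a feature specific to the standard extension --- to obtain a \emph{section system} \((\tilde s^1, \standardsectionsystem_{z^3}^0)\) on \(\StandardExtension(z^3)\) whose associated \(3\)-cocycle is \(\tilde z^3\); it then applies proposition~\ref{prop:extension_equivalence_from_standard_extension_to_arbitrary_crossed_module_extension} with \(E = \StandardExtension(z^3)\), getting an equivalence \(\StandardExtension(\tilde z^3) \map \StandardExtension(z^3)\). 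You instead note \(\cohomologyclassofextension(\StandardExtension(\tilde z^3)) = z^3 + \CoboundaryGroup[3]_{\text{cpt}}(G, M)\) via proposition~\ref{prop:standard_extension_of_a_3-cocycle} and use the general proposition~\ref{prop:every_3-cocycle_of_the_cohomology_class_of_an_crossed_module_extension_is_constructable} to obtain a \emph{lifting system} on \(\StandardExtension(\tilde z^3)\) realizing \(z^3\), then apply proposition~\ref{prop:extension_equivalence_from_standard_extension_to_arbitrary_crossed_module_extension} with \(E = \StandardExtension(\tilde z^3)\), getting an equivalence in the opposite direction \(\StandardExtension(z^3) \map \StandardExtension(\tilde z^3)\); since \({\extensionequivalent}\) is by definition the generated equivalence relation, either direction suffices. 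Your route has the small advantage of bypassing the standard-extension-specific proposition~\ref{prop:every_cocycle_of_the_standard_extension_comes_from_a_section_system} entirely, using only the general machinery already in place; the paper's route showcases the section-system feature of \(\StandardExtension(z^3)\) and makes the comparison cochain \(c^2\) explicit. Your remarks on well-definedness of \(\extensionclass\) and on the universe membership of the free group construction are correct and match what the paper handles in corollary~\ref{cor:surjectivity_of_cohomology_class_of_extension}.
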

\begin{proof}
We suppose given cocycles \(z^3, \tilde z^3 \in \CocycleGroup[3]_{\text{cpt}}(G, M)\) such that \(z^3 + \CoboundaryGroup[3]_{\text{cpt}}(G, M) = \tilde z^3 + \CoboundaryGroup[3]_{\text{cpt}}(G, M)\), and we let \(c^2 \in \CochainComplex[2]_{\text{cpt}}(G, M)\) be given such that \(\tilde z^3 = c^2 \differential + z^3\). By proposition~\ref{prop:every_cocycle_of_the_standard_extension_comes_from_a_section_system}, there exists a section system \((\tilde s^1, \standardsectionsystem_{z^3}^0)\) for \(\StandardExtension(z^3)\) such that \(\tilde z^3\) is the \(3\)-cocycle of \(\StandardExtension(z^3)\) with respect to \((\tilde s^1, \standardsectionsystem_{z^3}^0)\). Hence \(\StandardExtension(\tilde z^3) \extensionequivalent \StandardExtension(z^3)\) by proposition~\ref{prop:extension_equivalence_from_standard_extension_to_arbitrary_crossed_module_extension}.
\end{proof}

\begin{definition}[crossed module extension class associated to a componentwise pointed \(3\)-cohomology class] \label{def:extension_class_associated_to_a_componentwise_pointed_3-cohomology_class}
For every \(3\)-cocycle \(z^3 \in \CocycleGroup[3]_{\text{cpt}}(G, M)\), the crossed module extension class \(\extensionclass(z^3 + \CoboundaryGroup[3]_{\text{cpt}}(G, M)) = [\StandardExtension(z^3)]_{\extensionequivalent}\) is called the \newnotion{crossed module extension class} associated to \(z^3 + \CoboundaryGroup[3]_{\text{cpt}}(G, M)\).
\end{definition}

Finally, we can deduce the desired bijection.

\begin{theorem}[{cf.~\cite[th.~4.5]{holt:1979:an_interpretation_of_the_cohomology_groups_h_n_g_m},~\cite[p.~310]{huebschmann:1980:crossed_n-fold_extensions_of_groups_and_cohomology},~\cite{maclane:1979:historical_note},~\cite[th.~9.4]{ratcliffe:1980:crossed_extensions}}] \label{th:bijection_between_crossed_module_extension_classes_and_third_componentwise_pointed cohomology_group}
We suppose given a Grothendieck universe \(\mathfrak{U}\) that contains an infinite set and we suppose \(G\) to be a group in \(\mathfrak{U}\) and \(M\) to be an abelian \(G\)-module in \(\mathfrak{U}\). The maps
\begin{align*}
& \cohomologyclassofextension\colon \ExtensionClasses[\mathfrak{U}]{2}(G, M) \map \CohomologyGroup[3]_{\text{cpt}}(G, M) \text{ and} \\
& \extensionclass\colon \CohomologyGroup[3]_{\text{cpt}}(G, M) \map \ExtensionClasses[\mathfrak{U}]{2}(G, M)
\end{align*}
are mutually inverse bijections, where the extension class of the trivial crossed module extension \(\TrivialHomomorphismCrossedModule{M}{G}\) corresponds to the trivial cohomology class \(0 \in \CohomologyGroup[3]_{\text{cpt}}(G, M)\). In particular,
\[\ExtensionClasses[\mathfrak{U}]{2}(G, M) \isomorphic \CohomologyGroup[3](G, M).\]
\end{theorem}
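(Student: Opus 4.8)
The plan is to show that the two maps are mutually inverse by checking that both composites are identities; at this stage of the paper essentially all the substantive work has been discharged into the preceding propositions, so the argument is just a matter of assembling them. Recall that \(\cohomologyclassofextension\colon \ExtensionClasses[\mathfrak{U}]{2}(G, M) \map \CohomologyGroup[3]_{\text{cpt}}(G, M)\) is well-defined by corollary~\ref{cor:equal_cohomology_classes_for_equivalent_crossed_module_extensions} and \(\extensionclass\colon \CohomologyGroup[3]_{\text{cpt}}(G, M) \map \ExtensionClasses[\mathfrak{U}]{2}(G, M)\) is well-defined by corollary~\ref{cor:standard_extensions_with_respect_to_cohomologous_3-cocycles_are_extension_equivalent}, so nothing further about well-definedness needs to be verified.

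First I would check that applying \(\extensionclass\) and then \(\cohomologyclassofextension\) is the identity on \(\CohomologyGroup[3]_{\text{cpt}}(G, M)\). Given a componentwise pointed \(3\)-cocycle \(z^3 \in \CocycleGroup[3]_{\text{cpt}}(G, M)\), we have \(\extensionclass(z^3 + \CoboundaryGroup[3]_{\text{cpt}}(G, M)) = [\StandardExtension(z^3)]_{\extensionequivalent}\) by definition~\ref{def:extension_class_associated_to_a_componentwise_pointed_3-cohomology_class}, and hence the composite sends \(z^3 + \CoboundaryGroup[3]_{\text{cpt}}(G, M)\) to \(\cohomologyclassofextension(\StandardExtension(z^3))\). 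But proposition~\ref{prop:standard_extension_of_a_3-cocycle} states precisely that \(\cohomologyclassofextension(\StandardExtension(z^3)) = z^3 + \CoboundaryGroup[3]_{\text{cpt}}(G, M)\), so this composite is the identity.

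Next I would check that applying \(\cohomologyclassofextension\) and then \(\extensionclass\) is the identity on \(\ExtensionClasses[\mathfrak{U}]{2}(G, M)\). Starting from a crossed module extension \(E\) of \(G\) with \(M\), choose a representative \(z^3 \in \CocycleGroup[3]_{\text{cpt}}(G, M)\) with \(\cohomologyclassofextension(E) = z^3 + \CoboundaryGroup[3]_{\text{cpt}}(G, M)\). The key point is that proposition~\ref{prop:every_3-cocycle_of_the_cohomology_class_of_an_crossed_module_extension_is_constructable} lets us realize this \emph{particular} representative by a lifting system: there is a lifting system \((Z^2, Z^1)\) for \(E\) with \(\cocycleofextension{3}_{E, (Z^2, Z^1)} = z^3\). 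Then proposition~\ref{prop:extension_equivalence_from_standard_extension_to_arbitrary_crossed_module_extension} produces an extension equivalence \(\StandardExtension(z^3) \map E\), so \([\StandardExtension(z^3)]_{\extensionequivalent} = [E]_{\extensionequivalent}\). Since \(\extensionclass(\cohomologyclassofextension([E]_{\extensionequivalent})) = \extensionclass(z^3 + \CoboundaryGroup[3]_{\text{cpt}}(G, M)) = [\StandardExtension(z^3)]_{\extensionequivalent}\), the composite returns \([E]_{\extensionequivalent}\). The only subtlety worth naming — and the reason the two preceding propositions must be combined in this order — is that one needs to realize the prescribed cocycle \(z^3\), not merely some cocycle in its class, so that proposition~\ref{prop:extension_equivalence_from_standard_extension_to_arbitrary_crossed_module_extension} applies to that exact \(z^3\).

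Having both composites equal to the identity, \(\cohomologyclassofextension\) and \(\extensionclass\) are mutually inverse bijections. For the base points, example~\ref{ex:cohomology_class_of_trivial_extension} gives \(\cohomologyclassofextension(\TrivialHomomorphismCrossedModule{M}{G}) = 0\), so the extension class of \(\TrivialHomomorphismCrossedModule{M}{G}\) corresponds to \(0 \in \CohomologyGroup[3]_{\text{cpt}}(G, M)\). Finally, composing this bijection with the isomorphism \(\CohomologyGroup[3]_{\text{cpt}}(G, M) \isomorphic \CohomologyGroup[3](G, M)\) from corollary~\ref{cor:descriptions_of_componentwise_pointed_cocycle_coboundary_and_cohomology_group}\ref{cor:descriptions_of_componentwise_pointed_cocycle_coboundary_and_cohomology_group:cohomology} yields \(\ExtensionClasses[\mathfrak{U}]{2}(G, M) \isomorphic \CohomologyGroup[3](G, M)\), completing the proof.
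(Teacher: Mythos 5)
Your proposal is correct and follows essentially the same route as the paper's own proof: both composites are checked using proposition~\ref{prop:standard_extension_of_a_3-cocycle} for one direction and the combination of proposition~\ref{prop:every_3-cocycle_of_the_cohomology_class_of_an_crossed_module_extension_is_constructable} with proposition~\ref{prop:extension_equivalence_from_standard_extension_to_arbitrary_crossed_module_extension} for the other, with example~\ref{ex:cohomology_class_of_trivial_extension} and corollary~\ref{cor:descriptions_of_componentwise_pointed_cocycle_coboundary_and_cohomology_group}\ref{cor:descriptions_of_componentwise_pointed_cocycle_coboundary_and_cohomology_group:cohomology} handling the remaining assertions. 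The subtlety you flag — realizing the prescribed representative \(z^3\) rather than an arbitrary cocycle in its class — is exactly the point the paper's argument also turns on.
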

\begin{proof}
By proposition~\ref{prop:standard_extension_of_a_3-cocycle}, we have
\[\cohomologyclassofextension(\extensionclass(z^3 + \CoboundaryGroup[3]_{\text{cpt}}(G, M))) = \cohomologyclassofextension([\StandardExtension(z^3)]_{\extensionequivalent}) = \cohomologyclassofextension(\StandardExtension(z^3)) = z^3 + \CoboundaryGroup[3]_{\text{cpt}}(G, M)\]
for every \(3\)-cocycle \(z^3 \in \CocycleGroup[3]_{\text{cpt}}(G, M)\), that is, \(\cohomologyclassofextension \comp \extensionclass = \id_{\CohomologyGroup[3]_{\text{cpt}}(G, M)}\). Thus it remains to show that \(\extensionclass \comp \cohomologyclassofextension = \id_{\ExtensionClasses{2}(G, M)}\). To this end, we suppose given a crossed module extension \(E\) and we choose \(z^3 \in \CocycleGroup[3]_{\text{cpt}}(G, M)\) with \(\cohomologyclassofextension(E) = z^3 + \CoboundaryGroup[3]_{\text{cpt}}(G, M)\). By proposition~\ref{prop:every_3-cocycle_of_the_cohomology_class_of_an_crossed_module_extension_is_constructable}, there exists a lifting system \((Z^2, Z^1)\) for \(E\) such that \(\cocycleofextension{3}_{E, (Z^2, Z^1)} = z^3\). Hence proposition~\ref{prop:extension_equivalence_from_standard_extension_to_arbitrary_crossed_module_extension} implies that \(\StandardExtension(z^3) \extensionequivalent E\) and thus
\[\extensionclass(\cohomologyclassofextension([E]_{\extensionequivalent})) = \extensionclass(\cohomologyclassofextension(E)) = \extensionclass(z^3 + \CoboundaryGroup[3]_{\text{cpt}}(G, M)) = [\StandardExtension(z^3)]_{\extensionequivalent} = [E]_{\extensionequivalent}.\]
The assertion on the trivial crossed module extension has been shown in example~\ref{ex:cohomology_class_of_trivial_extension}. Finally, corollary~\ref{cor:descriptions_of_componentwise_pointed_cocycle_coboundary_and_cohomology_group}\ref{cor:descriptions_of_componentwise_pointed_cocycle_coboundary_and_cohomology_group:cohomology} yields
\[\ExtensionClasses[\mathfrak{U}]{2}(G, M) \isomorphic \CohomologyGroup[3]_{\text{cpt}}(G, M) \isomorphic \CohomologyGroup[3](G, M). \qedhere\]
\end{proof}

The standard extension \(\StandardExtension(z^3)\) with respect to a given \(3\)-cocycle \(z^3 \in \CocycleGroup[3]_{\text{cpt}}(G, M)\) involves free groups. So even if \(G\) and \(M\) are both finite, the module part and the group part of \(\StandardExtension(z^3)\) are both infinite. The question occurs whether there exists a crossed module extension \(E\) with \(\cohomologyclassofextension(E) = z^3 + \CoboundaryGroup[3]_{\text{cpt}}(G, M)\) and with \(\GroupPart E\) and \(\ModulePart E\) finite. Such an extension has been constructed explicitly by \eigenname{Ellis}~\cite[proof for \(c = 2\), p.~502]{ellis:1997:spaces_with_finitely_many_non-trivial_homotopy_groups_all_of_which_are_finite}.

% bibliography

\bigskip

{\raggedleft Sebastian Thomas \\ Lehrstuhl D f{\"u}r Mathematik \\ RWTH Aachen University \\ Templergraben 64 \\ 52062 Aachen \\ Germany \\ sebastian.thomas@math.rwth-aachen.de \\ \url{http://www.math.rwth-aachen.de/~Sebastian.Thomas/} \\}

\end{document}